\newcommand{\R}{\mathds{R}}
\newcommand{\N}{\mathds{N}}
\newcommand{\Z}{\mathds{Z}}
\theoremstyle{plain}
\newtheorem{thm}{Theorem}
\newtheorem{lem}{Lemma}
\newtheorem{rmk}{Remark}
\numberwithin{equation}{section}
\title{On the local stability of the elapsed-time model in terms of
the transmission delay and interconnection strength}
\date{April 2025}
\author[1]{María J. Cáceres\thanks{\textit{E-mail address}: \texttt{\href{mailto:caceresg@ugr.es}{caceresg@ugr.es}}}}
\author[1]{José A. Cañizo \thanks{\textit{E-mail address}: \texttt{\href{mailto:canizo@ugr.es}{canizo@ugr.es}}}}
\author[2,1]{Nicolas Torres\thanks{\textit{E-mail address}:\texttt{\href{mailto:torres@ugr.es}{torres@ugr.es}}}}
\affil[1]{Universidad de Granada, Departamento de Matemática Aplicada \& IMAG, Andalusia, Spain.}
\affil[2]{Université Côte d'Azur, Laboratoire Jean Alexandre Dieudonné - CNRS UMR 7351, Nice, France.}
\begin{document}

\maketitle

\begin{abstract}
The elapsed-time model describes the behavior of interconnected neurons through the time since their last spike.
It is an age-structured non-linear equation in which age corresponds to the elapsed time since the last discharge, and models many interesting dynamics depending on the type of interactions between neurons. We investigate the linearized stability of this equation by considering a discrete delay, which accounts for the possibility of a synaptic delay due to the time needed to transmit a nerve impulse from one neuron to the rest of the ensemble.
We state a stability criterion that allows to determine if a steady state is linearly stable or unstable depending on the delay and the interaction between neurons. Our approach relies on the study of the asymptotic behavior of  related Volterra-type integral equations in terms of theirs Laplace transforms. The analysis is complemented with numerical simulations illustrating the change of stability of a steady state in terms of the delay and the intensity of interconnections.
\end{abstract}

\noindent{\makebox[1in]\hrulefill}\newline
2010 \textit{Mathematics Subject Classification.} 35F15, 35F20, 45D05, 92-10.\\
\textbf{Keywords}: Age-structured models, Delay equations, Linear stability, Laplace transform, Volterra equations. 


\section{Introduction}

In the context of modeling brain dynamics, 
many families of partial differential equations (PDEs)
have proven to be a useful approach for understanding how neurons interact. Among these families, 
we have for example the well-known non-linear Fokker-Planck systems  in the context of the integrate-and-fire systems (referred to as the NNLIF models), where neurons are described through the membrane potential by taking into account the internal and external voltage difference. These systems has been studied by many authors \cite{caceres2011analysis,carrillo2015qualitative,caceres2021understanding,carrillo2023noise,caceres2019global,ikeda2022theoretical,caceres2024asymptotic,caceres2024sequence} through different analysis techniques and numerical simulations. For a comprehensive review of other non-linear partial differential equations in neuroscience we refer to \cite{carrillo2025nonlinear}.

We mention the NNLIF model because in this article we extend the techniques developed for this model \cite{caceres2024asymptotic} to a different and another important approach, namely the Elapsed Time (ET) model. It is
a non-linear age-structured equation for an interconnected ensemble of neurons given by the elapsed time since the last discharge.
In these dynamics, neurons are subjected to random discharges so that when they reach the firing potential, they stimulate other neurons to spike and depending on the type of interaction, different possible behaviors of evolution of the brain activity are possible. This model was initially proposed in \cite{pakdaman2009dynamics} 
and subsequently developed by many authors,
as we will explain later. 

We study the non-linear 
ET model, accounting for the synaptic transmission delay, given by

\begin{equation}
\label{eqdelay}
    \begin{cases}
        \partial_t n +\partial_a n + S(a,r(t-d))n=0 & t,a>0,\\
            n(t,a=0)=r(t)\coloneqq\int_0^\infty S(a,r(t-d))n(t,a)\,da& t>0,\\
            n(t=0,a)=n^0(a)& a>0,\\
            r(t)=r^0(t) & t\in[-d,0],
    \end{cases} 
\end{equation}
where the unknown $n(t,a)$ describes the probability density of finding a neuron at time $t$, whose elapsed time since the last spike is $a\ge 0$, which corresponds to the \emph{age} in the terminology of age-structured equation. 
A neuron spikes when its membrane potential reaches
a threshold value, causing the neuron to discharge and
its age to be reset to $0$.
Neurons spike according to the given firing coefficient $S$, which depends on the age $a$ and the discharging flux of neurons $r(t)$. Therefore, the model is non-linear. For this system $r(t)$ describes the total activity of the network.
Moreover, we also assume that when a neuron spikes, its transmission to the rest of the neurons takes a given delay $d>0$, which is called the synaptic delay. This means that the activity $r(t)$ depends on the activity of $r(t-d)$ and the firing coefficient $S$, as we see in the boundary condition of $n$ for $a=0$.

Furthermore, we assume that $S$ is increasing with respect to $a$, which means that when the age $a$ increases neurons are more susceptible to discharge. On the other hand, the dependence of $S$ with respect to $r$ determines 
the type of regime of interacting neurons. When $S$ is increasing with respect to $r$ we say that the network is \emph{excitatory}, which means that under a high activity neurons are more susceptible to discharge. Similarly, when $S$ is decreasing we say that the network is \emph{inhibitory} and we have the opposite effect on the network.

Usually in the literature, the firing rate is written as $S(a,br)$ where $b\ge0$ is called the network connectivity parameter, so that a small $b$ means a weakly interconnected regime and a large $b$ means strong interconnections. We explore this aspect through numerical simulations in Section \ref{numerical}.


A prototypical example of the coefficient $S$ in the literature is the case with an absolute refractory period, which corresponds to
\begin{align}
   \label{S-refrac}
    \tag{HSr} 
  S(a,r) & = \varphi(r)\mathds{1}_{\{a>\sigma\}},\:\varphi:\R^+\to\R^+\,\mbox{smooth, Lipschitz-bounded,}\\
    &  s_0\le\varphi(r)\,\mbox{for some}\, s_0 \, \mbox{and}\nonumber
 \, \sigma>0.
\end{align}
   where the constant $\sigma$ is called the refractory period. This means that neurons are not susceptible to discharge while $a<\sigma$, but once the refractory period is attained, neurons discharge with firing coefficient $\varphi(r)$.

Throughout this article, the following hypothesis about $S(a,r)$ is assumed:
\begin{equation}
  \label{boundS}
  \tag{HS} 
  S  \, \mbox{is Lipschitz-bounded function, such that } 
  s_0\mathds{1}_{\{a>\sigma\}}\le S(a,r)\, \forall a,r\ge 0, 
  \mbox{for some} \, s_0 \, \mbox{and}
 \, \sigma>0.
\end{equation}
We remark that the regularity of $S$ is not crucial for the results presented in this article and the Lipschitz assumption is just made for simplicity. The results are still valid, for example, in the case of \eqref{S-refrac}. Moreover, the same holds when $S$ satisfies Lipschitz estimates involving the integral of respect to $a$, as it was done for example in \cite{mischler2018,mischler2018weak}.

Finally, the initial data is a non-negative function $n^0\in L^1(\R^+)$ and $r^0\in\mathcal{C}[-d,0]$, the total activity in $[-d,0]$. 
If we consider the case with instantaneous transmission information, i.e. $d=0$, the only initial condition is $n(0,a)$ for $a>0$. In this 
case the equation for $r(t)$, if $n(t,a)$ is known,
$r(t) = \int_0^\infty S(a,r(t)) n(t,a)\ da$ may be an ill-posed
problem; see \cite{pakdaman2009dynamics, canizo2019asymptotic,sepulveda2023well}.

The system formally has the following mass-conservation property
\begin{equation}
\label{massconservation}
    \int_0^\infty n(t,a)\,da = \int_0^\infty n^0(a)\,da=1,\qquad\forall t\ge 0,
\end{equation}
since we assume that $n$ is a probability density.

The elapsed-time equation is a mean-field limit of a microscopic model that establishes a bridge of the dynamics of a single neuron with a population-based approach, whose aspects have been investigated in \cite{pham1998activity,ly2009spike,chevallier2015microscopic,chevallier2015mean,quininao2016microscopic,schwalger2019mind}. Moreover, different extensions of the elapsed time model have been studied by incorporating new elements such as the fragmentation equation \cite{pakdaman2014adaptation}, spatial dependence with connectivity kernel in \cite{torres2020dynamics}, a multiple-renewal equation in \cite{torres2022multiple} and a leaky memory variable in \cite{fonte2022long}.

From an analytical point of view, the elapsed-time model has been studied by many authors. The case without delay, i.e. $d=0$, given by the system
\begin{equation}
\label{eqnodelay}
    \begin{cases}
        \partial_t n +\partial_a n + S(a,r(t))n=0 & t,a>0,\\
            n(t,a=0)=r(t)\coloneqq\int_0^\infty S(a,r(t))n(t,a)\,da& t>0,\\
            n(t=0,a)=n^0(a)& a>0,\\
    \end{cases} 
\end{equation}
has been studied with different techniques. Pakdaman et al. \cite{pakdaman2009dynamics,pakdaman2013relaxation,pakdaman2014adaptation} proved exponential convergence to equilibrium for the case with weak nonlinearities, inspired by the entropy method \cite{perthame2006transport,michel2005general}, while in \cite{canizo2019asymptotic} it was done through the semigroup approach with Doeblin's theory \cite{gabriel2018measure,bansaye2020ergodic} and extended to the case of discrete and distributed delay through comparison techniques in \cite{caceres2025comparison}. In addition, aspects of well-posedness were studied in \cite{sepulveda2023well}. The general dynamics beyond the perturbative case is still an open problem. Some partial results in this direction include \cite{torres2021elapsed}, where the existence of periodic solutions with jump discontinuities was established in the case of strong non-linearities.

On the other hand, the case with distributed delay has been previously studied in \cite{mischler2018,mischler2018weak} through an spectral analysis based on the analysis in \cite{mischler2016spectral} for the growth-fragmentation equation. This corresponds to the modified model given by
\begin{equation}
\label{eqdistri}
    \begin{cases}
        \partial_t n +\partial_a n + S(a,X(t))n=0 & t,a>0,\\
            n(t,a=0)=r(t)\coloneqq\int_0^\infty S(a,X(t))n(t,a)\,da& t>0,\\
            X(t)=\int_{-\infty}^t r(s)\alpha(t-s)\,ds,&t>0\\
            n(t=0,a)=n^0(a)& a>0,\\
    \end{cases} 
\end{equation}
where $X(t)$ takes the role of the total activity, and $\alpha\ge0$ with $\int_0^\infty\alpha(s)\,ds=1$ is called the kernel of distributed delay. In this model $X(t)$ depends on the previous states of the system through the convolution of the discharging flux $r(t)$ and $\alpha(t)$. At a formal level, when $\alpha(t)$ approaches in the sense of distributions to the Dirac's mass $\delta(t-d)$, then we formally get $X(t)=r(t-d)$ and we recover Equation \eqref{eqdelay} with a single discrete delay $d$ and Equation \eqref{eqnodelay} in the case of $d=0$. In this regard, Equation \eqref{eqdelay} may be considered as a singular case of the equation with distributed delay \eqref{eqdistri}.

The steady states $(n^*,r^*)$ of Equation \eqref{eqdelay} (and also \eqref{eqnodelay}) are given by the equation:
\begin{equation}
  \label{stationary-equation}
    \begin{cases}
           \partial_a n^* + S(a,r^*)n^*=0 & a>0,\\
            r^*\coloneqq n^*(a=0)=\int_0^\infty S(a,r^*)n^*(a)\,da,&
    \end{cases}
\end{equation}
whose solutions are $n^*(a):=r^*e^{-\int_0^a S(a',r^*) \ da'}$, with $r^*>0$,  such that 
$n^*$ satisfies
the conservation law $\int_0^\infty n^*(a) da = 1$.
This means that $r^*$ should be
a solution of the equation
\begin{equation}
\label{Int-r}
    rI(r)=1,\ \textrm{with}\:I(r)\coloneqq \int_0^\infty e^{-\int_0^a S(s,r)ds}da.
\end{equation}
Inhibitory systems possess a single steady state. In contrast, excitatory systems with a bounded $S$
always have at least one steady state, and may exhibit multiple steady states \cite{pakdaman2009dynamics}.

One notable case in the study of the elapsed-time model is the linear case. When the firing coefficient $S$ depends on the age $a$ and a fixed
$\bar{r}\ge0$, the system  becomes linear:
\begin{equation}
\label{eqlinear}
    \begin{cases}
        \partial_t n +\partial_a n + S(a,\bar{r})n=0 & t,a>0,\\
            n(t,a=0)=r(t)\coloneqq\int_0^\infty S(a,\bar{r})n(t,a)\,da& t>0.\\
            n(t=0,a)=n^0(a)& a>0,
    \end{cases} 
\end{equation}
Its steady state $(n^*,r^*)$ is given by 
\begin{equation}
  \label{steady-state-linear}
n^*(a)=r^*e^{-\int_0^a S(a',\bar{r}) \ da'}
  \quad \mbox{and}
  \quad
  r^*=\frac{1}{\int_0^\infty e^{-\int_0^a S(a',\bar{r})da'}da},
\end{equation}
which satisfies the previously mentioned conservation law, and $r^*$ is the unique solution to $rI(\bar{r})=1$, with $I$ given in \eqref{Int-r}.

Equation \eqref{eqlinear} can be 
written as an abstract ODE in the space of borelian finite signed measures $\mathcal{M}(\R^+)$, given by
\begin{equation}
    \label{operator-L}
    \partial_t n =L_{\bar{r}}[n]\coloneqq-\partial_a n - S(a,\bar{r})n+\delta_0 \int_0^\infty S(a,\bar{r})n(a)\,da,
\end{equation}
where the domain of the linear operator $L_{\bar{r}}$,
$D(L_{\bar{r}})$, is a dense subset of $\mathcal{M}(\R^+)$ 
(for a reference on semigroups see for example \cite{engel2000one}).
For the sake of simplicity of the notation in the computations, we treat the elements in $\mathcal{M}(\R^+)$ as if they were integrable functions with corresponding generalization.

We will denote the solution of this linear problem  as $e^{tL_{\bar{r}}}$, which determines a positive and mass-preserving semigroup in $\mathcal{M}(\R^+)$, i.e $e^{tL_{\bar{r}}}$ is a Markov semigroup. The asymptotic behavior of 
the linear system 
is well-known, and  we describe it by the following theorem.

\begin{thm}[Exponential convergence to the steady state of the linear system]
\label{doeblin-conv}
Assume that $S$ satisfies \eqref{boundS} and consider the
unique positive steady state $(n^*,r^*)$ such that $n^*\in L^1(0,\infty)$ with $\int_0^\infty n^*\,da=1$ given in \eqref{steady-state-linear}. Then
there exist  constants $C,\mu>0$ such that for all initial condition $n^0\in \mathcal{M}(\R^+)$ the following estimates hold for all $t\ge 0$
   \begin{equation}
     \label{exp-conv-linear-r}
       |r(t)-\langle n^0\rangle r^*|\le C e^{-\mu t}\|n^0-\langle n^0\rangle n^*\|_{TV},
   \end{equation}
and
   \begin{equation}
   \label{exp-conv-linear-n}
    \|e^{tL_{\bar{r}}} n^0-\langle n^0\rangle n^*\|_{TV}\le Ce^{-\mu t}\|n^0-\langle n^0\rangle n^*\|_{TV},
   \end{equation}
   with $\langle n^0\rangle\coloneqq \int_0^\infty n^0 da$.
\end{thm}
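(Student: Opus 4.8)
The plan is to recognize Equation \eqref{eqlinear} as a linear, positive, mass-conserving (Markov) renewal-type equation and to prove the result via Doeblin's theorem, exactly as in \cite{canizo2019asymptotic}. By linearity of the semigroup $e^{tL_{\bar r}}$ and since $n^*$ is its invariant measure (so $e^{tL_{\bar r}}n^* = n^*$), I would first reduce to the probability-measure case: writing $h \coloneqq n^0-\langle n^0\rangle n^*$, which has zero mass, both \eqref{exp-conv-linear-n} and the convergence of $r$ follow once I establish a uniform exponential contraction $\|e^{tL_{\bar r}}h\|_{TV}\le Ce^{-\mu t}\|h\|_{TV}$ for zero-mass signed measures, equivalently a TV-contraction between the evolutions of any two probability measures. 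Doeblin's theorem provides precisely this, provided the semigroup satisfies a uniform minorization condition: there exist $t_0>0$, $\alpha\in(0,1)$ and a probability measure $\nu$ such that $e^{t_0 L_{\bar r}}n^0\ge \alpha\nu$ for every probability measure $n^0$.

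The core of the argument is therefore verifying this minorization, and for that I would use the method of characteristics. Integrating along $\tfrac{da}{dt}=1$ gives the mild representation $n(t,a)=r(t-a)\,e^{-\int_0^a S(s,\bar r)\,ds}$ for $a<t$, where $r(t)=n(t,0)$ is the reborn flux. The structural bounds in \eqref{boundS}, namely $s_0\mathds{1}_{\{a>\sigma\}}\le S\le \|S\|_\infty$, then yield two successive lower bounds. First, a survival estimate shows that a neuron of initial age $a_0$ has fired at least once by time $T$ with probability at least $1-e^{-s_0(T-\sigma)_+}$, so by mass conservation the integrated activity obeys $\int_0^T r(s)\,ds\ge 1-e^{-s_0(T-\sigma)_+}$, which is strictly positive for $T>\sigma$. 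Second, feeding the representation into the boundary term and using $r(t)\ge s_0\int_\sigma^\infty n(t,a)\,da$ together with $e^{-\int_0^a S}\ge e^{-\|S\|_\infty a}$ gives the convolution-type inequality $r(t)\ge s_0\,e^{-\|S\|_\infty t}\int_0^{t-\sigma} r(s)\,ds$. Combining the two, for $t\ge 3\sigma$ one obtains $\int_0^{t-\sigma}r\ge 1-e^{-s_0\sigma}$ and hence a genuine pointwise bound $r(t)\ge \rho>0$ on a fixed window, say $t\in[3\sigma,4\sigma]$, with $\rho$ depending only on $s_0,\sigma,\|S\|_\infty$.

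With this in hand I would set $t_0=4\sigma$ and note that for $a\in(0,\sigma)$ one has $t_0-a\in(3\sigma,4\sigma)$, so $n(t_0,a)=r(t_0-a)e^{-\int_0^a S}\ge \rho\,e^{-\|S\|_\infty \sigma}$; this gives the minorization $e^{t_0 L_{\bar r}}n^0\ge \rho\,e^{-\|S\|_\infty\sigma}\,\mathds{1}_{(0,\sigma)}(a)\,da$ uniformly over all probability measures $n^0$, from which $\alpha$ and $\nu$ are read off. Doeblin's theorem then delivers \eqref{exp-conv-linear-n} for some $C,\mu>0$. Finally, \eqref{exp-conv-linear-r} follows from \eqref{exp-conv-linear-n} by writing $r(t)-\langle n^0\rangle r^*=\int_0^\infty S(a,\bar r)\bigl(n(t,a)-\langle n^0\rangle n^*(a)\bigr)\,da$ and bounding by $\|S\|_\infty\,\|e^{tL_{\bar r}}n^0-\langle n^0\rangle n^*\|_{TV}$. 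The main obstacle is the minorization step: the activity $r$ is genuinely not bounded below at small times (e.g.\ for data concentrated near $a=0$), so the delicate point is the bootstrap that waits several refractory periods until a uniformly positive fraction of mass has aged past $\sigma$ and fired, and the careful bookkeeping ensuring every constant depends only on $s_0,\sigma,\|S\|_\infty$ and not on $n^0$, so that the bound is uniform over all initial probability measures, including singular ones.
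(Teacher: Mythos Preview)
Your argument is correct and follows essentially the Doeblin approach of \cite{canizo2019asymptotic}, which the paper acknowledges as one of the known routes to this theorem. However, the proof actually presented in the paper is a \emph{different} one, based on Volterra integral equations and the Laplace transform. Concretely, the paper reduces to the zero-mass case, uses the characteristics formula to derive a convolution equation $r(t)=g(t)+\int_0^t h(t-s)r(s)\,ds$ with explicit kernels $g,h$ built from $S$, shows that the poles of $\widehat g/(1-\widehat h)$ in a half-plane $\Re(z)>-s_0$ are exactly the zeros of $1-\widehat h$, proves via a direct real-part argument that the only such zero with nonnegative real part is $z=0$ (whose contribution vanishes since the initial data has zero mass), and then invokes the asymptotic expansion of Theorem~\ref{asymptotic-volterra} to obtain the exponential decay of $r$, from which the decay of $n$ follows by the characteristics formula.

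The trade-off is clear: your Doeblin argument is shorter, more robust (it works directly at the level of measures without any smoothness of $S$), and yields the result with constants depending only on $s_0,\sigma,\|S\|_\infty$; but it gives no explicit information about the optimal rate $\mu$. The paper's Volterra--Laplace approach is more computational, yet it identifies $\mu$ with the spectral gap of $L_{\bar r}$ through the location of the poles of $F$, and---more importantly for the paper's purposes---it sets up exactly the machinery that is reused in Section~\ref{sec:linearstab} to analyze the delayed nonlinear problem. In that sense the paper's choice of proof is motivated less by efficiency than by the fact that the same circle of ideas (Laplace transform, pole counting, asymptotic expansion) drives the main results on the linearized stability of Equation~\eqref{eqdelay}.
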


\begin{rmk}
\label{rem:spectral-gap}
It is often said that a linear equation exhibits a \emph{spectral gap} when
its solutions decay exponentially toward the equilibrium.
In this context, we will refer to the constant $\mu>0$
provided in the theorem as the \emph{spectral gap parameter} of the linear
operator $L_{\bar{r}}$.
\end{rmk}

This theorem has been proved using different techniques, such as the entropy method \cite{perthame2006transport}, Doeblin's theory \cite{canizo2019asymptotic} and Kato's inequality \cite{mischler2018weak}. The result 
also holds for the nonlinear equation \eqref{eqdelay} under weak perturbations of the linear case, where a unique equilibrium exists, as shown in \cite{pakdaman2009dynamics,mischler2018,canizo2019asymptotic,caceres2025comparison}.

\subsection{Main results of this article}

The aim of this article is to analyze the linearized stability of Equation \eqref{eqdelay} around a steady state $(n^*,r^*)$ in order to establish a stability criterion in terms of the delay $d$ and a constant $A^*$, which depends on the steady state and the firing coefficient $S$. Specifically, we prove the following two results.

\begin{thm}
\label{stability-poles} 
Assume that $S$ satisfies \eqref{boundS} and let $(n^*,r^*)$ be a given steady state. Consider the constant 
$$
A^*\coloneqq\int_0^\infty\partial_r S(a,r^*)n^*(a)\,da,
$$
the measure $G\coloneqq -\partial_r S(a,r^*)n^*+A^*\delta_0$ and the function $h_0(t)\coloneqq\int_0^\infty S(a,r^*)e^{tL_{r^*}}[G](a)\,da$. Then there exists $\alpha_0>0$ such that the Laplace's transform $\widehat{h_0}(z):=\int_0^\infty e^{-zt} h_0(t) \ dt$ is defined for $\Re(z)\ge-\alpha_0$ and the linear stability  of the equilibrium $(n^*,r^*)$ is determined by the function
\begin{equation}
\label{poles-delay}
   \Phi_d(z)\coloneqq1-e^{-zd}(\widehat{h_0}(z)+A^*),
\end{equation}
in the following sense:
\begin{enumerate}
     \item If the function $\Phi_d(z)$ has a zero with positive real part, then $(n^*,r^*)$ is linearly unstable.
    \item If all the zeros of $\Phi_d(z)$ have negative real part, then $(n^*,r^*)$ is linearly asymptotically stable.
\end{enumerate}
\end{thm}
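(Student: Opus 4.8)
The plan is to linearize \eqref{eqdelay} around the steady state $(n^*,r^*)$, recast the linearized system as an inhomogeneous Cauchy problem driven by the Markov semigroup $e^{tL_{r^*}}$ of \eqref{operator-L}, and then reduce the stability question to a scalar delayed renewal (Volterra) equation whose characteristic function turns out to be exactly $\Phi_d$. First I would write $n=n^*+u$, $r=r^*+\rho$, expand $S(a,r(t-d))=S(a,r^*)+\partial_rS(a,r^*)\rho(t-d)+O(\rho^2)$, and discard quadratic terms. Using the stationary relations \eqref{stationary-equation} to cancel the zeroth-order contributions, the transport equation becomes $\partial_t u+\partial_a u+S(a,r^*)u=-\partial_rS(a,r^*)\,n^*\,\rho(t-d)$, while the activity relation linearizes to $\rho(t)=\int_0^\infty S(a,r^*)u(t,a)\,da+A^*\rho(t-d)$ together with $u(t,0)=\rho(t)$. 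The decisive observation is that, in the measure formulation \eqref{operator-L}, these pack into
\[
\partial_t u = L_{r^*}[u] + \rho(t-d)\,G,\qquad G=-\partial_rS(a,r^*)n^*+A^*\delta_0,
\]
since the extra boundary term $A^*\rho(t-d)$ is precisely the Dirac contribution $A^*\delta_0\,\rho(t-d)$ closing the gap between the true incoming flux and the one built into $L_{r^*}$. I would also record that $\langle G\rangle=\int_0^\infty(-\partial_rS\,n^*)\,da+A^*=0$ and, by mass conservation \eqref{massconservation}, $\langle u^0\rangle=\langle n^0\rangle-1=0$; both facts are what later produces the spectral gap.

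Next I would apply Duhamel's formula, $u(t)=e^{tL_{r^*}}u^0+\int_0^t e^{(t-s)L_{r^*}}[G]\,\rho(s-d)\,ds$, integrate against $S(a,r^*)$, and substitute into the activity relation. With $h_0(t)=\int_0^\infty S(a,r^*)e^{tL_{r^*}}[G]\,da$ as in the statement and $f(t)=\int_0^\infty S(a,r^*)e^{tL_{r^*}}u^0\,da$ coming from the data, this closes into the delayed renewal equation
\[
\rho(t)=f(t)+\int_0^t h_0(t-s)\,\rho(s-d)\,ds+A^*\rho(t-d).
\]
Because $\langle G\rangle=0$ and $\langle u^0\rangle=0$, Theorem~\ref{doeblin-conv} applied to the mass-zero objects $G$ and $u^0$ gives $\|e^{tL_{r^*}}G\|_{TV},\ \|e^{tL_{r^*}}u^0\|_{TV}\le Ce^{-\mu t}$, so $|h_0(t)|+|f(t)|\le C'e^{-\mu t}$ as $S$ is bounded; hence $\widehat{h_0}$ and $\widehat f$ are analytic on $\Re(z)>-\mu$, which yields the asserted $\alpha_0\in(0,\mu)$. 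Taking Laplace transforms, the delay terms produce factors $e^{-zd}$ along with the data polynomial $P(z)=\int_{-d}^0 e^{-z\tau}\rho(\tau)\,d\tau$, and the renewal equation collapses to $\Phi_d(z)\,\widehat\rho(z)=N(z)$, where $\Phi_d(z)=1-e^{-zd}(\widehat{h_0}(z)+A^*)$ and $N(z)=\widehat f(z)+e^{-zd}(\widehat{h_0}(z)+A^*)P(z)$ is analytic on the strip. Thus the singularities of $\widehat\rho$ are exactly the zeros of $\Phi_d$.

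For the instability statement, given a zero $z_0$ of $\Phi_d$ with $\Re(z_0)>0$ I would exhibit an explicit self-similar mode $u(t,a)=e^{z_0t}U(a)$, $\rho(t)=e^{z_0t}$ solving the homogeneous linearized system. Solving the transport ODE $U'+(z_0+S)U=-e^{-z_0d}\partial_rS\,n^*$ with $U(0)=1$ by an integrating factor, one finds $U=e^{-z_0d}(z_0-L_{r^*})^{-1}G+\Phi_d(z_0)\,\phi_0$ with $\phi_0(a)=e^{-z_0a-\int_0^aS(s,r^*)\,ds}$, and a direct computation shows the closure condition $1=\int_0^\infty S(a,r^*)U\,da+A^*e^{-z_0d}$ reduces to $\Phi_d(z_0)=0$; applying $\langle\cdot\rangle$ to the mode equation forces $\langle U\rangle=0$, so the mode is an admissible mass-preserving perturbation, and $\Re(z_0)>0$ makes it grow. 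For the stability statement, assuming all zeros of $\Phi_d$ lie in $\Re(z)<0$, I would invert $\widehat\rho=N/\Phi_d$ by a Bromwich integral and shift the contour to a line $\Re(z)=-\varepsilon<0$, collecting $|\rho(t)|\le Ce^{-\varepsilon t}$ and then $\|u(t)\|_{TV}\le Ce^{-\varepsilon t}$ from Duhamel.

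\textbf{The main obstacle} is exactly this last contour shift: because the factor $e^{-zd}$ makes $\Phi_d$ a transcendental quasi-polynomial with infinitely many zeros, one must rule out zeros accumulating on the imaginary axis and establish a uniform lower bound $|\Phi_d(z)|\ge c>0$ on the strip $-\varepsilon\le\Re(z)\le 0$ as $|\Im(z)|\to\infty$, together with enough decay of $\widehat{h_0}$ (Riemann–Lebesgue, since $h_0\in L^1$) to justify moving and closing the contour. Note that for large $|\Im(z)|$ one has $\Phi_d(z)\approx 1-A^*e^{-zd}$, whose zeros sit on $\Re(z)=d^{-1}\log A^*$, so controlling this asymptotic family is what ties the gap condition to $A^*$ and $d$. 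This quantitative complex-analytic control, carried out in the spirit of \cite{caceres2024asymptotic}, is the technical heart of the argument.
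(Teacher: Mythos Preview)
Your proposal is correct and follows essentially the same route as the paper: linearize to obtain $\partial_t u=L_{r^*}u+\rho(t-d)G$, apply Duhamel and the spectral gap of Theorem~\ref{doeblin-conv} on the zero-mass space to get the delayed renewal equation for $\rho$ with exponentially decaying kernels $h_0$ and $f$, and then read off stability from the zeros of $\Phi_d$. The only packaging differences are that for instability the paper invokes the asymptotic expansion (its Theorem~\ref{asymptotic-volterra 2}) rather than your explicit eigenmode construction---though your route is exactly the paper's Lemma~\ref{linear-unstable} combined with the pole/eigenvalue correspondence of Lemma~\ref{laplace-h}---and for stability the paper isolates the contour-shifting argument you flag as the main obstacle into a standalone result (Theorem~\ref{asymptotic-volterra 2} and Lemma~\ref{contour-left}), whose proof is precisely the Riemann--Lebesgue/Rouch\'e control of $\Phi_d(z)\approx 1-A^*e^{-zd}$ that you anticipate.
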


And this theorem allows us to assert the following stability criterion:

\begin{thm}[Linear stability criterion]
\label{stability-criterion}
   Under the same hypothesis and notation as in Theorem \ref{stability-poles} the following assertions hold.
    \begin{enumerate}
        \item[1.] If $\frac{d}{dr}\left.\left(\frac{1}{I(r)}\right)\right|_{r=r^*}>1$, where $I$ is the function in \eqref{Int-r}, then the equilibrium $(n^*,r^*)$ is linearly unstable for any delay $d>0$. 
        \item[2.] If $|A^*|+\int_0^\infty|h_0(t)|\,dt<1$
        then the equilibrium $(n^*,r^*)$ is linearly asymptotically stable for any delay $d\ge0$.
        \item[3.] If $|A^*|>1$, then equilibrium $(n^*,r^*)$ is linearly unstable for any delay $d>0$.
        \item[4.] If $|A^*|<1$ and $(n^*,r^*)$ is linearly asymptotically stable for $d=0$, then it is also stable for $d>0$ sufficiently small.
    \end{enumerate}
\end{thm}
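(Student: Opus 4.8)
The common thread is that, by Theorem \ref{stability-poles}, every assertion reduces to locating the zeros of $\Phi_d$ in the closed half-plane $\Re(z)\ge 0$. I will repeatedly use three elementary features of $\Phi_d$: it is real-valued on the real axis (because $h_0$ and $A^*$ are real); it satisfies $\Phi_d(x)\to 1$ as $x\to+\infty$ along the reals, since $e^{-xd}\to0$ and $\widehat{h_0}(x)\to0$ by integrability of $h_0$ (which follows from $\widehat{h_0}$ being defined up to $\Re(z)=-\alpha_0$); and its value at the origin, $\Phi_d(0)=1-(\widehat{h_0}(0)+A^*)$, is independent of $d$.

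For assertion 1 the key is the identity
\begin{equation*}
\widehat{h_0}(0)+A^*=\frac{d}{dr}\Big(\frac{1}{I(r)}\Big)\Big|_{r=r^*}.
\end{equation*}
I would establish it by writing $\widehat{h_0}(0)=\int_0^\infty S(a,r^*)g(a)\,da$, where $g:=\int_0^\infty e^{tL_{r^*}}[G]\,dt=-L_{r^*}^{-1}[G]$ is the unique zero-mass solution of $L_{r^*}[g]=-G$; note $\int_0^\infty g\,da=0$ because the semigroup preserves mass and $G$ has zero mass. Solving this first-order ODE explicitly with integrating factor $e^{\int_0^a S(s,r^*)\,ds}$, matching the $\delta_0$-terms to get $g(0)=\widehat{h_0}(0)+A^*$, and integrating by parts using $n^*(a)=r^*e^{-\int_0^a S(s,r^*)\,ds}$, yields precisely the right-hand side above. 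Granting the identity, the hypothesis forces $\Phi_d(0)<0$; together with $\Phi_d(x)\to1$ and the intermediate value theorem this produces a real zero $x_0>0$, so instability follows from Theorem \ref{stability-poles}. Assertion 2 is immediate: for $\Re(z)\ge0$ one has $|e^{-zd}|\le1$ and $|\widehat{h_0}(z)|\le\int_0^\infty|h_0|\,dt$, whence $|\Phi_d(z)-1|\le|A^*|+\int_0^\infty|h_0|\,dt<1$, so $\Phi_d$ cannot vanish on $\Re(z)\ge0$ and Theorem \ref{stability-poles} gives stability.

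For assertion 3 I would compare $\Phi_d$ with its principal part $f(z):=1-e^{-zd}A^*$, whose zeros $z_k=\tfrac{\ln|A^*|}{d}+i\theta_k$ (with $\theta_k$ an arithmetic progression tending to $\pm\infty$, of step $2\pi/d$ if $A^*>0$ and of odd multiples of $\pi/d$ if $A^*<0$) all have positive real part $\ln|A^*|/d>0$ and are simple with $|f'(z_k)|=d$. On small fixed circles around $z_k$ the periodicity of $f$ in $\Im(z)$ gives a lower bound $|f|\gtrsim d$ uniform in $k$, while $|e^{-zd}\widehat{h_0}(z)|\to0$ as $k\to\infty$ by the Riemann–Lebesgue decay of $\widehat{h_0}$ along the vertical line $\Re(z)=\ln|A^*|/d$. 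Hence for $|k|$ large Rouché's theorem places a genuine zero of $\Phi_d$ inside such a circle, lying in $\Re(z)>0$, so Theorem \ref{stability-poles} yields instability.

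For assertion 4 I split $\{\Re(z)\ge0\}$ into a large disk and its complement. On the tail $\{|z|\ge M,\ \Re(z)\ge0\}$ I choose $M$ so large that $|\widehat{h_0}(z)|<1-|A^*|$ there; then $|\Phi_d(z)-1|\le|A^*|+|\widehat{h_0}(z)|<1$ for \emph{every} $d\ge0$, which is exactly where $|A^*|<1$ enters and removes any escape of zeros to infinity. On the compact set $K=\{|z|\le M,\ \Re(z)\ge0\}$, asymptotic stability at $d=0$ means $\Phi_0$ has no zero with $\Re(z)\ge0$, so $|\Phi_0|\ge\delta>0$ on $K$; since $\Phi_d\to\Phi_0$ uniformly on $K$ as $d\to0$, we get $\Phi_d\ne0$ on $K$ for $d$ small, and combining the two regions closes the argument. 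The two delicate points, which I expect to be the main obstacles, are the resolvent identity of assertion 1, where the $\delta_0$ boundary bookkeeping and the zero-mass normalization must be handled so that $g(0)$ truly equals $\tfrac{d}{dr}(1/I)|_{r^*}$, and the uniform Rouché comparison of assertion 3, which requires quantifying both the lower bound for $|f|$ and the decay of $\widehat{h_0}$ along $\Re(z)=\ln|A^*|/d$.
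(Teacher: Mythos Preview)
Your proposal is correct and tracks the paper's proof closely. Points 1 and 3 are essentially identical to the paper: the resolvent identity you sketch for assertion 1 is exactly Lemma \ref{laplace-h}, and the Rouché comparison with $1-A^*e^{-zd}$ for assertion 3 is the paper's argument almost verbatim, including the uniform lower bound via periodicity and the Riemann--Lebesgue decay of $\widehat{h_0}$ along the vertical line. Your treatment of assertion 2 is in fact \emph{simpler} than the paper's: you observe directly that $|e^{-zd}(\widehat{h_0}(z)+A^*)|\le|A^*|+\int_0^\infty|h_0|\,dt<1$ throughout $\{\Re(z)\ge0\}$, whereas the paper establishes this bound only on the imaginary axis and then invokes the maximum principle together with a $\limsup$ estimate at infinity---an unnecessary detour, since $|\widehat{h_0}(z)|\le\|h_0\|_{L^1}$ already holds on the whole closed half-plane. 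For assertion 4 the two arguments differ slightly in bookkeeping: you split $\{\Re(z)\ge0\}$ into a compact disk and a tail where $|\widehat{h_0}|<1-|A^*|$ kills zeros for every $d$, while the paper works in a strip $\{\Re(z)>-\beta\}$ and reuses the Rouché comparison from point 3 to confine the zeros of $\Phi_d$ to a compact set before passing to the limit $d\to0$; both routes are valid and rest on the same uniform Riemann--Lebesgue decay you flag as the delicate ingredient.
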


Our main results allow us to make the following observations:
\begin{enumerate}
    \item In both theorems, the key elements in determining the linear stability are the constant $A^*$ and the function $h_0$, which depend only on the equilibrium being considered, and the transmission delay $d$.
    \item  Theorem \ref{stability-poles}  provides a criterion for the linear stability of the equilibrium, based on the zeros of a function $\Phi_d$, defined in the complex plane, that depends solely on $A^*$, $h_0$ and $d$.
    Although finding the zeros of this function may be difficult in general, they can be determined numerically.

    \item  Theorem \ref{stability-criterion} offers a straightforward criterion for determining the linear stability.
   \begin{enumerate}
\item Point \textbf{1.} can be expressed in terms of $A^*$ and $h_0$ as follows
$$
\widehat{h_0}(0)+A^*>1,
$$
because $\frac{d}{dr}\left.\left(\frac{1}{I(r)}\right)\right|_{r=r^*}=\widehat{h_0}(0)+A^*$, as we will prove in Section \ref{sec:linearstab} 
(see Lemma \ref{laplace-h}). We opted to use  $I$ as it makes it easier to examine the slope of 
$1/I$ at the equilibrium. Moreover, this slope provides insight into the pseudo-equilibria sequence, which offers a discrete and simplified representation of the nonlinear model, while maintaining good agreement with it. More details can be found in \cite{caceres2024sequence}, where these sequences were originally introduced for the NNLIF model, and in Section \ref{sec:conclusion}, where we rigorously define the analogous sequence for the age-structured model and outline the initial steps of our ongoing research program aimed at developing these methods further.
   
\item We must point out that the stability criterion of Theorem \ref{stability-criterion} is not exhaustive when $|A^*|<1$. Indeed, point \textbf{2.} asserts stability when $|A^*|<1-\int_0^\infty|h_0(t)|\,dt$ for all $d\ge0$, but we do not have information when $1-\int_0^\infty|h_0(t)|\,dt<|A^*|<1$. Furthermore, the stability result of point \textbf{2.} is consistent with the previous results on the exponential convergence to a unique steady state under weak nonlinearities, as it was previously studied in \cite{pakdaman2009dynamics,canizo2019asymptotic} for the case with $d=0$ and in \cite{mischler2018weak,mischler2018,caceres2025comparison,perthame2025strongly} for the case with delay.

On the other hand, point \textbf{4.} ensures stability for $|A^*|<1$, provided that the equilibrium is stable for the system without delay. Indeed, we conjecture that the equilibrium is asymptotically stable when $A^*<1$ for $d=0$, which includes for example the strongly inhibitory regime where exponential convergence to the unique steady state is expected. In particular, we verify the linear stability for a particular firing coefficient $S$ with an absolute refractory period in Subsection \ref{subsec-refractory}.
\end{enumerate} 
\end{enumerate}

The novelty of this article lies in the fact  that the stability criterion helps to shed light on the behavior of the system \eqref{eqdelay} beyond the case of weak perturbations of the linear case and to understand the transition from weak to high connectivity in the nonlinearity involving the firing coefficient $S$. 

Our results on the stability state a certain parallelism to the case of voltage-structured models like NNLIF equation studied \cite{caceres2024asymptotic}, on which the techniques throughout this article are based. The main difference is that in the NNLIF the activity depends directly on the density and a connectivity parameter, while in our case the activity $r(t)$ depends on the firing coefficient $S$, which is a function, and must be analyzed separately from $n(t,a)$. Therefore, the main difficulty in linearizing the system \eqref{eqdelay} is the dependence of the activity $r(t)$ in terms of the probability $n$, through the function $S$, which requires a more delicate analysis.

The article is organized as follows. In Section \ref{sec:linear},
we remind the linear equation for \eqref{eqdelay} where in the firing coefficient $S(a,r)$, the value of $r$ is fixed and we give a proof through the approach of Volterra integral equations and their Laplace transform, which motivates the use of this method. This corresponds to a classical technique to study these type of equations and it appears naturally in the context of age-structured models \cite{metz1986dynamics,iannelli2017basic}. 
In Section \ref{sec:linearstab}, we analyze the linearized equation of system \eqref{eqdelay} around a steady state and 
prove Theorems \ref{stability-poles} and \ref{stability-criterion} by studying a modified version of the Volterra equation that accounts for the terms involving the delay $d$. 
We conclude that section with an application in the case of an absolute refractory period. In Section \ref{contour-thm} we give the proof of a theorem that gives information on the asymptotic behavior of the modified Volterra equation, which is needed in the proof of the stability criterion, inspired by \cite{caceres2024asymptotic}.
Section \ref{numerical} is devoted to showing some examples of bifurcation diagrams in terms of a connectivity parameter, along with numerical solutions of Equation \eqref{eqdelay} to illustrate the dependence of the delay $d$ on the stability  of the steady state.
Finally, in Section \ref{sec:conclusion}, we present the main conclusions of our work and outline several perspectives for future research.

\section{The linear equation revisited}
\label{sec:linear}

The study of the linear equation \eqref{eqlinear} is fundamental to understand the linear stability around a steady state of Equation
\eqref{eqdelay}. In this section, we present an alternative proof of Theorem \ref{doeblin-conv} using a different approach based on the theory of Volterra integral equations of the form
\begin{equation}
    \label{volterra}
    u(t)= g(t)+\int_0^t h(t-s)u(s)\,ds,
\end{equation}
where $g$ and $h$ are two given locally integrable functions. This approach of integral equations has been widely studied in context of age-structured models in biology such as in the works of Metz et al \cite{metz1986dynamics} and Ianelli et al. \cite{iannelli2017basic}. In our case, we write an equation of this type for the 
activity of the system $r(t)$.
Then, if we formally consider Laplace transform in \eqref{volterra},
we obtain the following equality for $\widehat{u}(z):=\int_0^\infty e^{-zs}u(s)  \ ds$,
the Laplace transform of $u(t)$ solution to \eqref{volterra}

\begin{equation}
    \label{volterra-sol}
    \widehat{u}(z)=\frac{\widehat{g}(z)}{1-\widehat{h}(z)}\eqqcolon F(z).
\end{equation}
And the following result provides the asymptotic behavior of the solution to \eqref{volterra},
under some conditions on functions $g$ and $h$, which guaranty its asymptotic expansion in
terms of the poles of funcion $F$, given in \eqref{volterra-sol}.

\begin{thm}
\label{asymptotic-volterra}
    Assume that $g,h\colon [0,\infty)\to\R$ are locally integrable functions and of bounded variation on compact sets. Additionally, suppose there exists $\alpha_0\in\R$ such that $g(t)e^{-\alpha_0 t}\in L^\infty(\R^+)$ and $h(t)e^{-\alpha_0 t}\in L^1(\R^+)\cap L^p(\R^+)$ for some $p>1$. 
        Then:
    \begin{enumerate}
    \item  The function $F(z)$,  defined in \eqref{volterra-sol}, 
      is a meromorphic function for $\Re(z)>\alpha_0$ with a finite number of poles $\{\lambda_k\}_{k=1}^M$ in this region.
      \item The solution $u(t)$ of Equation \eqref{volterra} has the following asymptotic expansion when $t\to\infty$
    \begin{equation}
    \label{asympt-expansion-linear}
        u(t)=\sum_{k=1}^{M} e^{\lambda_k t}p_k(t)+ O(e^{\alpha t}),
    \end{equation}
    where $\alpha>\alpha_0$ and $p_k(t)$ is a polynomial of degree $m_k-1$, with $m_k$ the order of the pole and determined by the following formula 
    $$p_k(t)e^{\lambda_k t}=\operatorname{Res}\left(e^{zt}F(z),\lambda_k\right).$$
    In addition there exists a constant $C_h>0$, depending on $h$, such that coefficients $\{a_j^k\}_{j=0}^{m_k-1}$ of the polynomials $p_k$ satisfy the bound
    $$|a_j^k|\le C_h \|g(t)e^{-\alpha_0 t}\|_\infty.$$
    \end{enumerate}
\end{thm}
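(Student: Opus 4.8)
The plan is to recover $u$ from $F$ by a Bromwich (inverse-Laplace) integral whose contour is pushed from a large abscissa down to a vertical line just to the right of $\alpha_0$, picking up residues at the poles $\{\lambda_k\}$ along the way. First I would record the analytic preliminaries. A standard Picard/resolvent iteration shows that \eqref{volterra} has a unique locally integrable solution, which on each bounded interval is built from $g$ and finitely many convolution powers of $h$; consequently $u$ grows at most exponentially, $\widehat{u}$ is defined in some half-plane $\Re(z)>\beta$, and there $\widehat{u}(z)=F(z)$. From $g(t)e^{-\alpha_0 t}\in L^\infty$ one gets that $\widehat{g}$ is holomorphic in $\Re(z)>\alpha_0$ with $|\widehat{g}(z)|\le \|ge^{-\alpha_0\cdot}\|_\infty/(\Re(z)-\alpha_0)$, and from $h(t)e^{-\alpha_0 t}\in L^1$ that $\widehat{h}$ is holomorphic in $\Re(z)>\alpha_0$ and continuous up to the line $\Re(z)=\alpha_0$. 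Since $\widehat{g}$ has no poles there, the poles of $F$ in $\Re(z)>\alpha_0$ are precisely the zeros of the holomorphic function $1-\widehat{h}$.

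For Conclusion 1 I would confine these zeros to a compact set. As $\Re(z)\to+\infty$ we have $\widehat{h}(z)\to 0$ by dominated convergence, and as $|\Im(z)|\to\infty$ with $\Re(z)\ge\alpha_0$ the Riemann--Lebesgue lemma applied to $he^{-\alpha_0\cdot}\in L^1$ (with the $L^p$, $p>1$, integrability furnishing, via Hausdorff--Young after interpolation to $L^{\min(p,2)}$, a quantitative decay of $\widehat{h}$ on vertical lines) shows $\widehat{h}(z)\to 0$ uniformly for $\Re(z)\ge\alpha_0$. Hence $|\widehat{h}(z)|<1$ outside a bounded set $\{|z|\le R\}\cap\{\Re(z)\ge\alpha_0\}$, so every zero of $1-\widehat{h}$ lies in this compact region. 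As $1-\widehat{h}$ is holomorphic and $\not\equiv 0$ (since $\widehat{h}\to0$ at infinity), its zeros are isolated, and isolated points in a compact set are finite in number; this yields the finite list $\{\lambda_k\}_{k=1}^M$ and the meromorphy of $F$.

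For Conclusion 2 I would pass to the resolvent kernel $r$, defined by $\widehat{r}=\widehat{h}/(1-\widehat{h})$, so that $u=g+r*g$. Writing $v\coloneqq r*g$ with $\widehat{v}=F-\widehat{g}$, the term $g$ is $O(e^{\alpha_0 t})=O(e^{\alpha t})$ and may be absorbed into the remainder, leaving $v$ to expand. This decomposition is what makes the contour integral converge: the bounded-variation hypotheses, through integration by parts, give $\widehat{g}(z)=O(1/|\Im(z)|)$ and $\widehat{h}(z)=O(1/|\Im(z)|)$ on vertical lines, whence $\widehat{v}=\widehat{r}\,\widehat{g}=O(1/|\Im(z)|^2)$ is absolutely integrable on the line $\Re(z)=\alpha$ for any fixed $\alpha>\alpha_0$, a decay that $F$ itself lacks. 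Choosing $\alpha_0<\alpha<\min_k\Re(\lambda_k)$, I invert $v(t)=\tfrac{1}{2\pi i}\int_{(\alpha)}e^{zt}\widehat{v}\,dz$ by shifting the contour from a large abscissa down to $\Re(z)=\alpha$; the horizontal segments vanish as they recede to infinity because $\widehat{v}=O(1/|\Im(z)|^2)$ while $e^{zt}$ stays bounded between the lines, and the residue theorem collects $\operatorname{Res}(e^{zt}\widehat{v},\lambda_k)=\operatorname{Res}(e^{zt}F,\lambda_k)$ at each pole (the two residues agree since $\widehat{g}$ is holomorphic at $\lambda_k$). Expanding $e^{zt}$ and the Laurent series of $F$ about $\lambda_k$ shows each residue equals $e^{\lambda_k t}p_k(t)$ with $\deg p_k=m_k-1$, while the remaining line integral is dominated by $e^{\alpha t}\tfrac{1}{2\pi}\int_{(\alpha)}|\widehat{v}|=O(e^{\alpha t})$, giving \eqref{asympt-expansion-linear}.

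Finally, the coefficient bound follows by reading the $a_j^k$ off the Laurent coefficients of $F$ at $\lambda_k$: these are finite linear combinations, with weights depending only on $h$ (through the location and order of the zero of $1-\widehat{h}$), of the values $\widehat{g}^{(i)}(\lambda_k)$, each satisfying $|\widehat{g}^{(i)}(\lambda_k)|\le i!\,(\Re(\lambda_k)-\alpha_0)^{-i-1}\|ge^{-\alpha_0\cdot}\|_\infty$; since there are finitely many poles this produces a single constant $C_h$ with $|a_j^k|\le C_h\|ge^{-\alpha_0\cdot}\|_\infty$. I expect the main obstacle to be exactly the control of $\widehat{v}$ (equivalently $F$) on vertical lines, which is where the hypotheses are indispensable: the $L^1\cap L^p$ condition delivers the decay of $\widehat{h}$ that both confines the poles to a compact set and underpins the regularity of $r$ needed for a pointwise inversion, while the bounded-variation condition upgrades the borderline, non-integrable rate $\widehat{g}=O(1/|\Im(z)|)$ to the integrable rate $O(1/|\Im(z)|^2)$ for $\widehat{v}$, thereby legitimizing both the contour shift and the $O(e^{\alpha t})$ remainder estimate.
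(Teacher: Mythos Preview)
Your overall architecture --- inversion at a large abscissa, contour shift to $\Re(z)=\alpha$ collecting residues, bound on the remaining line integral --- matches the paper's scheme. The paper does not prove this theorem directly (it cites references after the statement), but it proves the delayed analogue, Theorem~\ref{asymptotic-volterra 2}, in Section~\ref{contour-thm} by exactly this route, and that proof specializes verbatim to the present case. Your treatment of Conclusion~1 and of the coefficient bounds is correct and essentially the argument of Lemma~\ref{residue}.

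The genuine gap is in your control of the line integral $\int_{(\alpha)}e^{zt}\widehat{v}(z)\,dz$. You assert that ``the bounded-variation hypotheses, through integration by parts, give $\widehat{g}(z)=O(1/|\Im(z)|)$ and $\widehat{h}(z)=O(1/|\Im(z)|)$''. Integration by parts yields this only when $\int_0^\infty e^{-\alpha t}\,|dg(t)|<\infty$, i.e.\ when $e^{-\alpha t}g$ has \emph{globally} bounded variation; the hypothesis is merely BV on compact sets, which imposes no control on the growth of the total variation on $[0,T]$ as $T\to\infty$, so the step does not go through and your $O(1/|\Im(z)|^2)$ bound on $\widehat{v}$ is unjustified. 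The paper's remedy (Lemma~\ref{contour-left}, specialized to $A^*=0$, $d=0$) is a different decomposition,
\[
F(z)=\frac{\widehat{g}(z)\,\widehat{h}(z)^N}{1-\widehat{h}(z)}+\sum_{k=0}^{N-1}\widehat{g}(z)\,\widehat{h}(z)^k\eqqcolon F_1(z)+F_2(z).
\]
For $F_1$ one does not seek pointwise decay at all but bounds $\int_\R|F_1(\alpha+i\omega)|\,d\omega$ directly via Hausdorff--Young, obtaining $\lesssim\|e^{-\alpha t}g\|_2\,\|e^{-\alpha t}h\|_{p_0}^N$ with $p_0=2N/(2N-1)$; choosing $N$ large forces $p_0\le p$, and this is precisely where the $L^p$ hypothesis on $h$ enters. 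For $F_2$ one recognizes it as the Laplace transform of $g+g*h+\cdots+g*h^{*(N-1)}$, which inherits BV on compacts and the exponential bound, so the inversion formula (Theorem~\ref{inversion}) applies and returns a uniformly bounded quantity. Thus BV-on-compacts is used to license the \emph{pointwise inversion} of $F_2$ and of $u$ itself on $\Re(z)=\beta$, not to manufacture decay of $\widehat{g}$; once you replace your $O(1/|\Im(z)|^2)$ argument by this decomposition, the rest of your sketch goes through unchanged.
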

    For a proof of this result see \cite{gripenberg1990volterra,diekmann2012delay,caceres2024asymptotic,iannelli2017basic}.
    
    By applying this theorem to the linear equation \eqref{eqlinear}, we get an alternative proof of Theorem \ref{doeblin-conv}. Despite being a more complex argument, the advantage of this approach is that we obtain a more precise rate of exponential convergence for the activity $r(t)$. In fact, the asymptotic expansion \eqref{asympt-expansion-linear} determined by the poles of the function $F$ in \eqref{volterra-sol}, explicitly shows the optimal rate $\mu$ of exponential convergence in Theorem \ref{doeblin-conv} and the influence of other poles as well. In particular, from the spectral gap property we get $\mu=-\Re(z_0)>0$, where $z_0$ is the eigenvalue with the greatest real part of the operator $L_{\bar{r}}$ in \eqref{operator-L} restricted to the space
    \begin{equation}
    \label{space-X}
    \mathcal{X}\coloneqq\left\{\nu\in\mathcal{M}(\R^+)\colon\int_0^\infty \nu(a)\,da=0\right\},
    \end{equation}
    endowed with the norm of the total variation. 
    
    The proof of Theorem \ref{doeblin-conv} with this approach is as follows.

\begin{proof}[Proof of Theorem \ref{doeblin-conv}]
  Without loss of generality, we can replace the initial data $n^0$ with $\tilde{n}^0\coloneqq n^0-\langle n^0\rangle n^*$ by linearity, in order to restrict ourselves to the space $\mathcal{X}$.
  From the method of characteristics, we get the following expression for the solution $n(t,a)$ of Equation \eqref{eqlinear}
  \begin{equation}
	\label{char}
	n(t,a)=\tilde{n}_0(a-t)e^{-\int_0^t S(a-t+s,\bar{r})\,ds}\mathds{1}_{\{a>t\}}+r(t-a)e^{-\int_0^a S(s,\bar{r})\,ds}\mathds{1}_{\{0<a<t\}}.
      \end{equation}
      Then, by multiplying the expression \eqref{char} by
      $S(a,\bar{r})$ and integrating from 0 to infinity, we obtain
      that $r(t)$ in Equation \eqref{eqlinear} satisfies a Volterra
      integral equation as \eqref{volterra} with coefficients
\begin{align}
 \label{coeff-volterra-linear-g}
    g(t)&=\int_0^\infty S(t+s,\bar{r})e^{-\int_{s}^{t+s}S(a,\bar{r})da}\tilde{n}^0(s)\,ds=-\int_0^{\infty}\frac{d}{dt}\left(e^{-\int_{s}^{t+s}S(a,\bar{r})da}\right)\tilde{n}^0(s)\,ds,\\
  h(t)&=S(t,\bar{r})e^{-\int_{0}^{t}S(a,\bar{r})da}=-\frac{d}{dt}\left(e^{-\int_0^t S(a,\bar{r})da}\right).
        \label{coeff-volterra-linear-h}
\end{align}

    Notice that $g,h$ are of bounded variation in compact sets since $S$ is Lipschitz. 

    Let $s_0,\sigma>0$ be the constants from the condition for $S$
    given in \eqref{boundS}.
    We assert that $\widehat{g}(z)$ and $\widehat{h}(z)$ are defined for $\Re(z)>-s_0$, indeed for $\widehat{g}(z)$ we have

 \begin{equation*}
     \begin{split}
         |\widehat{g}(z)|&\le\int_0^\infty e^{-t\Re(z)}\int_0^\infty S(t+s,\bar{r})e^{-\int_{s}^{t+s}S(a,\bar{r})da}|\tilde{n}^0(s)|\,ds\,dt\\
         &\le \|S\|_\infty \int_0^\infty\int_0^\infty e^{-t\Re(z)}e^{-s_0\int_{s}^{t+s}\mathds{1}_{\{a>\sigma\}}da}|\tilde{n}^0(s)|\,dt\,ds\\
         &\le C\|S\|_\infty\int_0^\infty\int_0^\infty e^{-t(\Re(z)+s_0)}|\tilde{n}^0(s)|\,dt\,ds \\
         &\le C\|S\|_\infty\|\tilde{n}^0\|_{TV}\frac{1}{\Re(z)+s_0}, 
     \end{split}
 \end{equation*}
 where $C>0$ is a constant depending on $s_0$ and $\sigma$. Similarly, we get the same assertion for $\widehat{h}(z)$.

 \
 
 Due the fact that $g$ and $h$ are written in terms of the derivative of exponential functions of $S$ (see
 \eqref{coeff-volterra-linear-g}-\eqref{coeff-volterra-linear-h}), we can
  choose $\alpha_0$ with $-s_0<\alpha_0<0$, so that $g(t)e^{-\alpha _0 t}\in L^1(\R^+),\,h(t)e^{-\alpha_0 t}\in L^1(\R^+)\cap L^\infty(\R^+)$ and the poles of the function $F$ (see \eqref{volterra-sol})  defined in the half-plane $\{z\in\mathds{C}\colon\Re(z)\ge\alpha_0\}$ are given by the equation
 \begin{equation}
     \label{poles-linear}
     1-\widehat{h}(z)=z\int_0^\infty e^{-tz}e^{-\int_0^t S(a,\bar{r})da}dt=0.
 \end{equation}
Observe that $z=0$ is a solution of this equation and it is simple. The corresponding coefficient $p_0$ in the asymptotic expansion is given by

\begin{equation*}
p_0=\operatorname{Res}\left(e^{zt}F(z),0\right)=\lim_{z\to0} \frac{ze^{tz}\widehat{g}(z)}{1-\widehat{h}(z)}=\frac{\widehat{g}(0)}{\int_0^\infty e^{-\int_0^s S(a,\bar{r})\,da}ds}=
 \frac{\int_0^\infty \tilde{n}^0(a)\,da}{\int_0^\infty e^{-\int_0^s S(a,\bar{r})\,da}ds}=\langle \tilde{n}^0\rangle r^*=0.
\end{equation*}

Finally we show that any other possible pole of $F$ has necessarily negative
real part. We prove it by contradiction. We assume that there exists $z=\beta+i\omega$ with $z\neq0$ and $\beta\ge0$ such that $z$ is a solution of Equation \eqref{poles-linear}. This means $\widehat{h}(z)=1$:
\begin{equation*}
    \begin{split}
        \int_0^\infty e^{-tz}S(t,\bar{r})e^{-\int_0^t S(a,\bar{r})da}\,dt=1,
    \end{split}
\end{equation*}
and by taking real part we deduce that
$$1=\int_0^\infty \!\!\!\!\!\!
\cos(\omega t)e^{-\beta t}S(t,\bar{r})e^{-\int_0^t S(a,\bar{r})da}dt<
\int_0^\infty \!\!\!\!\!\!
S(t,\bar{r})e^{-\int_0^t S(a,\bar{r})da}dt=
\int_0^\infty \frac{d}{dt}\left(-e^{-\int_0^t S(a,\bar{r})da}\right)dt
=
1,$$
which is a contradiction, and therefore all the poles of $F$, other than $z=0$, have negative real part. 

We complete the proof by applying Theorem \ref{asymptotic-volterra}, which gives the following asymptotic expansion for $r(t)$
$$r(t)=\sum_{k=1}^{M}e^{\lambda_k t}p_k(t)+O(e^{\alpha t}),$$
where $\alpha>\alpha_0$ and $\Re(\lambda_k)<0$ is independent of the initial data, since $\widehat{h}$ in Equation \eqref{poles-linear} does not depend on $\tilde{n}^0$. Furthermore, there exists $C_h$ depending on $h$ (i.e. it depends on the function $S$) such that coefficients $\{a_j^k\}_{j=0}^{k}$ of the polynomials $p_k$ satisfy the bound
$$|a_j^k|\le C_h\|e^{-\alpha t} g(t)\|_\infty\le \Tilde{C}_{S} \|\tilde{n}^0\|_{TV},$$
where $\tilde{C}_{S}$ is a constant depending only on the function $S$. 
Hence we conclude that for $r(t)$ there exists $C,\mu>0$ depending only on $S$, such that 
$$|r(t)|\le C e^{-\mu t}\|n^0-\langle n^0\rangle n^*\|_{TV} \quad\forall t\ge 0,$$
and the estimate for $n$ in \eqref{exp-conv-linear-n} readily follows from the formula of the characteristics \eqref{char}.


\end{proof}

We finish this section with the following remark regarding the assumptions on $S$.

\begin{rmk} The result  holds even in case $S$ is not bounded from above.
  We used this restriction  only to show that $\widehat{g}(z)$ and
  $\widehat{h}(z)$ are defined for $\Re(z)>-s_0$.
  We can prove that using $s_0\mathds{1}_{\{a>\sigma\}}\le S(a,r)$ $\forall a,r\ge 0$ and obtain
     \begin{equation*}
     \begin{split}
         |\widehat{g}(z)|&\le\int_0^\infty e^{-t\Re(z)}\int_0^\infty S(t+s,\bar{r})e^{-\int_{s}^{t+s}S(a,\bar{r})da}|\tilde{n}^0(s)|\,ds\,dt\\
         &= -\int_0^\infty |\tilde{n}^0(s)|
             \int_0^\infty e^{-t\Re(z)}\frac{d}{dt}\left(e^{-\int_{s}^{t+s}S(a,\bar{r})da}\right)
             \,dt\,ds\\
         &\le C\|\tilde{n}^0\|_{TV}\left(1+\frac{s_0}{\Re(z)+s_0}\right)
       \end{split}
     \end{equation*}
     and
     $$
    |\widehat{h}(z)|\le C\left(1+\frac{s_0}{\Re(z)+s_0}\right).
    $$
  \end{rmk}

\section{Linear stability of the nonlinear system \eqref{eqdelay}}
\label{sec:linearstab}
Inspired by the ideas of the previous section, we proceed with the study of the local stability of the equilibria of the nonlinear system \eqref{eqdelay} in order to prove Theorems \ref{stability-poles} and \ref{stability-criterion}. The key idea to understand the asymptotic behavior around a steady state is to adapt the arguments of the Laplace transform to the context of systems with a single discrete delay, by means of the linearized system around the equilibrium.

\

We start by considering $(n^*,r^*)$ a steady state of the nonlinear equation \eqref{eqdelay}, and the
linearized system around it:
\begin{equation}
\label{linearized}
    \begin{cases}
        \partial_t \phi + \partial_a \phi +S(a,r^*)\phi+\partial_r S(a,r^*)n^*(a) r^\phi(t-d) =0& t,a>0\\
        \phi(t,a=0)=r^\phi(t)\coloneqq\int_0^\infty S(a,r^*)\phi(t,a)\,da + A^* r^\phi(t-d)& t>0\\
        \phi(t=0,a)=\phi^0(a)  
        &a>0\\
        r^\phi(t)=r^\phi_0(t)
        & t\in[-d,0].
    \end{cases}
\end{equation}
where
\begin{equation}
  \label{A*}
  A^*\coloneqq\int_0^\infty \partial_r S(a,r^*)n^* (a)\,da,
\end{equation}
$\phi^0\in \mathcal{M}(\R^+)$,  with $\int_0^\infty \phi^0(a)\,da=0$ and $r^\phi_0\in\mathcal{C}[-d,0]$.

We note that, in particular, to study the linear stability of the equilibrium $(n^*,r^*)$
we will consider $\phi^0(a)  \coloneqq n^0(a)-n^*(a)$ and $r^\phi_0(t)=r^0(t)-r^*$.
Therefore, from the mass conservation property, we see that $\int_0^\infty \phi(t,a)da=0$ for all $t\ge 0$.
Hence, it is appropriate to consider the space $\mathcal{X}$ previously defined in \eqref{space-X}.

We want to highlight that the parameter $A^*$, given in \eqref{A*}, is crucial for the long-term behavior of the system, as we will
show later. And it can be interpreted as an indicator of the degree of connectivity in the system.
\

In this section, we will study the long-term behavior of the system \eqref{linearized}. In a first step, we will
analyze the eigenfunctions of the problem in the space $\mathcal{X}$. We will notice that this is not enough to give a linear stability criterion. 
So, in the last part of the section, we will obtain  a modified Volterra integral equation for $r^\phi(t)$,
use the Laplace transform and employ the techniques of Section \ref{sec:linear} to provide our linear stability criterion.

  \subsection{Eigenfunctions of the linearized system}

  A first approach to study stability of steady states is to consider eigenfunctions, i.e., solutions of the form $\phi(t,a)=e^{\lambda\, t }\psi(a)$, with $\lambda\in\mathds{C}$ and $\psi\in\mathcal{X}$ to be determined. By replacing this particular form of $\phi$ in Equation \eqref{linearized}, we obtain the following eigenvalue problem

\begin{equation}
\label{eigenvalue-pb}
   \begin{cases}
        \lambda \psi + \psi'(a) +S(a,r^*)\psi+\partial_r S(a,r^*)n^*(a) e^{-\lambda d}\psi(0)=0& a>0\\
        \psi(a=0)=\int_0^\infty S(a,r^*)\psi(a)\,da + A^*e^{-\lambda d}\psi(0).&\\
    \end{cases}  
\end{equation}
Without loss of generality we can assume the normalization $\psi(0)=1$ (otherwise we obtain $\psi\equiv 0$) so that the eigenfunction is given by the expression

\begin{equation}
\label{eigenfunction}
 \psi(a)=e^{-\int_0^a S(s,r^*)ds}e^{-\lambda a}-e^{-\lambda d}\int_0^a e^{-\int_{a'}^{a}S(s,r^*)ds}e^{-\lambda(a-a')}\partial_r S(a',r^*)n^*(a')\,da',   
\end{equation}
and since $\psi\in\mathcal{X}$ and $n^*(a')=r^*e^{-\int_0^{a'} S(y,r^*) \ dy}$, we obtain by integration the following equation for the eigenvalues $\lambda$
\begin{equation}
\label{eigenvalue-eq}
    e^{-\lambda d} r^*\int_0^\infty e^{-\int_0^a S(s,r^*)ds}\int_0^a e^{-\lambda(a-a')}\partial_r S(a',r^*)\,da'\,da=\int_0^\infty e^{-\int_0^a S(s,r^*)ds}e^{-\lambda a}\,da.
  \end{equation}
  Reciprocally, a complex number $\lambda$ satisfying Equation \eqref{eigenvalue-eq} is an eigenvalue provided that the function defined \eqref{eigenfunction} is in $L^1(\R^+)$.

In order to understand the linear stability of the steady states of Equation \eqref{eqdelay}, we study the roots of Equation \eqref{eigenvalue-eq}. In this regard, we obtain the following simple lemma concerning the instability of equilibria.

\begin{lem}
\label{linear-unstable}  
    If Equation \eqref{eigenvalue-eq} has a root  $\lambda$ with
    $\Re(\lambda)>0$, then the equilibrium $(n^*,r^*)$ of the system \eqref{eqdelay} is linearly unstable.
\end{lem}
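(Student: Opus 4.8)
The plan is to exhibit an explicit solution of the linearized system \eqref{linearized} that grows exponentially in time, which immediately precludes stability. Suppose $\lambda$ is a root of \eqref{eigenvalue-eq} with $\Re(\lambda)>0$, and let $\psi$ be the associated function defined in \eqref{eigenfunction}, normalized by $\psi(0)=1$. By the very derivation of the eigenvalue problem \eqref{eigenvalue-pb}, the candidate $\phi(t,a)\coloneqq e^{\lambda t}\psi(a)$ automatically satisfies the transport equation and the boundary relation of \eqref{linearized}, with admissible history $r^\phi_0(t)=e^{\lambda t}$ on $[-d,0]$. Hence the only genuine verification is that $\psi$ is an admissible state, i.e.\ that $\psi\in\mathcal{X}$.

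First I would check that $\psi\in L^1(\R^+)$. Using $n^*(a)=r^*e^{-\int_0^a S(s,r^*)ds}$, the two terms in \eqref{eigenfunction} simplify thanks to the cancellation $e^{-\int_{a'}^a S(s,r^*)ds}\,n^*(a')=n^*(a)$, which gives
\[
\psi(a)=\frac{n^*(a)}{r^*}\,e^{-\lambda a}-e^{-\lambda d}\,n^*(a)\int_0^a e^{-\lambda(a-a')}\partial_r S(a',r^*)\,da'.
\]
Since $\Re(\lambda)>0$, the factor $e^{-\lambda a}$ is bounded and the inner integral is bounded by $\|\partial_r S(\cdot,r^*)\|_\infty/\Re(\lambda)$ (recall $\partial_r S$ is bounded by \eqref{boundS}), so $|\psi(a)|\le C\,n^*(a)$ with $n^*\in L^1(\R^+)$; therefore $\psi\in L^1(\R^+)$. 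Moreover, because $\lambda$ solves \eqref{eigenvalue-eq}, which is exactly the condition $\int_0^\infty\psi\,da=0$, we conclude $\psi\in\mathcal{X}$.

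Next I would pass to a real-valued solution. Writing $\lambda=\beta+i\omega$ with $\beta>0$ and noting that the coefficients of \eqref{linearized} are real, both $\Re\phi$ and $\Im\phi$ are real solutions of the linearized system, with initial data $\Re\psi,\Im\psi\in\mathcal{X}$. From $\|\phi(t,\cdot)\|_{TV}=e^{\beta t}\|\psi\|_{TV}$ and $\|\phi\|_{TV}\le\|\Re\phi\|_{TV}+\|\Im\phi\|_{TV}$ one gets $\max\big(\|\Re\phi(t,\cdot)\|_{TV},\|\Im\phi(t,\cdot)\|_{TV}\big)\ge\tfrac12 e^{\beta t}\|\psi\|_{TV}$, which is unbounded as $t\to\infty$ (here $\psi\not\equiv0$ since $\psi(0)=1$). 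Rescaling the relevant growing solution by $\varepsilon>0$ then yields, for every $\varepsilon$, a solution whose initial datum has norm $O(\varepsilon)$ but whose norm at time $t$ grows like $\varepsilon\,e^{\beta t}\to\infty$; hence no neighborhood of $(n^*,r^*)$ is preserved and the equilibrium is linearly unstable.

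I expect no real obstacle here, in line with the statement being a \emph{simple} lemma: once the growing eigensolution is constructed, instability is immediate. The only points requiring a short computation are the $L^1$ bound on $\psi$, handled cleanly by the cancellation above, and the routine real/imaginary-part argument used to extract a genuine real-valued growing solution.
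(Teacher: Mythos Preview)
Your proposal is correct and follows essentially the same approach as the paper: construct the eigensolution $e^{\lambda t}\psi(a)$, verify that $\psi\in L^1(\R^+)\cap\mathcal{X}$, and conclude instability from its exponential growth. In fact you supply more detail than the paper, which simply asserts $\psi\in L^1$ and takes $\phi^0=\psi$ without justifying the $L^1$ bound or addressing the passage from a complex eigenfunction to a real-valued growing solution; your cancellation argument and real/imaginary-part step fill those gaps cleanly.
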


\begin{proof}
  If $\lambda$ is a solution of Equation \eqref{eigenvalue-eq} with $\Re(\lambda)>0$, then $\lambda$ is an eigenvalue whose eigenfunction $\psi$ is given by \eqref{eigenfunction} with $\psi\in L^1(\R^+)$ and $\int_0^\infty\psi(a)\,da=0$. Hence, by taking $\phi^0(a)= n^0(a)-n^*(a) =\psi(a)$ we prove the linear instability of $(n^*,r^*)$.
\end{proof}

\subsection{Reformulation as a modified Volterra integral equation}

The case of asymptotic stability of the linear problem \eqref{linearized}, i.e. when all the eigenvalues of Equation \eqref{eigenvalue-pb} have negative real part, requires a more delicate analysis. In order to address it, 
we write Equation \eqref{linearized} as a perturbation of the linear problem introduced in the previous section, obtaining  

\begin{equation}
\label{equation-phi-linearized}
\partial_t \phi= L[\phi]+r^\phi(t-d)G,
\end{equation}
where $L\coloneqq L_{r^*}$ is the linear operator defined in \eqref{operator-L} with $\bar{r}=r^*$ and the distribution $G$ is defined as:
\begin{equation}
  \label{G}
    G\coloneqq -\partial_r S(a,r^*)n^* + A^*\delta_0.
\end{equation}

Next, by applying  Duhamel's formula, we get the following expressions for $\phi(t,a)$ and  $r^\phi(t)$
\begin{equation}
\label{duhamel-phi-linearized}
\begin{cases}
    \phi(t,a)=e^{tL}\phi^0(a)+\int_0^t e^{(t-s)L}[G](a) r^\phi(s-d)\,ds,\\
    r^\phi(t)=\int_0^\infty S(a,r^*)e^{tL}[\phi^0]da+\int_0^t\left(\int_0^\infty S(a,r^*)e^{(t-s)L}[G](a)\,da\right)r^\phi(s-d)ds+A^*r^\phi(t-d).
\end{cases}    
\end{equation}

And we observe that the function $u(t)=r^\phi(t)$ satisfies a modified convolution equation \eqref{volterra} of the form
\begin{equation}
    \label{volterra-delay}
    u(t)= g(t)+\int_0^t h(t-s)u(s)\,ds+A^*u(t-d)\,\mathds{1}_{\{t>d\}},
\end{equation}
whose coefficients $g$ and $h$ are given by
\begin{align}
\label{coeff-volterra-g}
  \begin{split}
   g(t)&\coloneqq \int_0^\infty S(a,r^*)e^{tL}[\phi^0]da+\int_0^d\int_0^\infty S(a,r^*)e^{(t-s)L}[G](a)(r^0(s-d)-r^*)\,da\,ds\\
    &\quad +A^*(r^0(t-d)-r^*)\mathds{1}_{[0,d]}(t),
   \end{split}\\
  h(t)&\coloneqq h_0(t-d)\,\mathds{1}_{\{t>d\}},\:\textrm{with}\: h_0(t)\coloneqq\int_0^\infty S(a,r^*)e^{tL}[G](a)da=r^G_L(t),
  \label{coeff-volterra-h}
\end{align}
where we denote by $r^G_L$ to the neural activity of the linear
system, given by the linear operator $L_{r^*}$.

By formally applying the Laplace transform
in \eqref{volterra-delay}, we obtain that 
$\widehat{u}(z)=F(z)$,
with   
  \begin{equation}
    \label{f}
    F(z)\coloneqq\frac{\widehat{g}(z)}{1-e^{-zd}(\widehat{h_0}(z)+A^*)}.
  \end{equation}

In order to understand the asymptotic behavior of $u=r^\phi$ we need to prove the analogous version of Theorem \ref{asymptotic-volterra} for the modified Volterra equation \eqref{volterra-delay}. We must remark that we cannot apply directly Theorem \ref{asymptotic-volterra} due to the term arising from the discrete delay in Equation \eqref{eqdelay}. The modified equation could be seen as a convolution equation \eqref{volterra} whose kernel has a singular term. 

In this context, the proof of Theorem \ref{stability-poles} relies on the following theorem.

\begin{thm}
\label{asymptotic-volterra 2}
    Assume that $g,h\colon [0,\infty)\to\R$ are locally integrable functions and of bounded variation on compact sets.
    Additionally, suppose there exists  $\alpha_0\in\R$ such that $g(t)e^{-\alpha_0 t}\in L^\infty(\R^+)$ and $h(t)e^{-\alpha_0 t}\in L^1(\R^+)\cap L^p(\R^+)$ for some $p>1$. 
    Then: \begin{enumerate}
        \item The function $F$, defined in \eqref{f}, is a meromorphic function for $\Re(z)>\alpha_0$ with a finite number of poles $\{\lambda_k\}_{k=1}^M$ when $\Re(z)>\frac{\ln |A^*|}{d}$.
        \item The solution $u$ of the modified Equation \eqref{volterra-delay} has the following asymptotic expansion  when $t\to\infty$
    \begin{equation}
    \label{asymptotic-expan}
        u(t)=\sum_{k=1}^{M} e^{\lambda_k t}p_k(t)+ O(e^{\alpha t}),
    \end{equation}
    where $\alpha>\max\{\alpha_0,\frac{\ln|A^*|}{d}\}$ and $p_k(t)$ is a polynomial of degree $m_k-1$, with $m_k$ the order of the pole and determined by the following formula 
    $$p_k(t)e^{\lambda_k t}=\operatorname{Res}\left(e^{zt}F(z),\lambda_k\right).$$
    In addition there exists a constant $C_h>0$, depending on the function $h$, such that coefficients $\{a_j^k\}_{j=0}^{m_k-1}$ of the polynomials $p_k$ satisfy the bound
    $$|a_j^k|\le C_h \|g(t)e^{-\alpha_0 t}\|_\infty.$$
    \end{enumerate}
\end{thm}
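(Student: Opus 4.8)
The plan is to mirror the contour-integration argument behind Theorem~\ref{asymptotic-volterra}, the only genuinely new feature being the singular delay term $A^*u(t-d)\mathds{1}_{\{t>d\}}$, whose kernel is the measure $A^*\delta_d$ rather than an $L^1$ function. Since $h_0$ is merely a shift of $h$, the weighted function $h_0e^{-\alpha_0 t}$ inherits the $L^1\cap L^p$ integrability assumed for $h$. First I would record that the exponential-weight hypotheses make $\widehat{g}(z)$ and $\widehat{h_0}(z)$ holomorphic on $\Re(z)>\alpha_0$: writing $g(t)e^{-zt}=\big(g(t)e^{-\alpha_0 t}\big)e^{-(z-\alpha_0)t}$ and using $ge^{-\alpha_0 t}\in L^\infty$ (respectively $h_0e^{-\alpha_0 t}\in L^1$) shows the defining integrals converge absolutely and depend holomorphically on $z$ there. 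Consequently the denominator $D(z):=1-e^{-zd}(\widehat{h_0}(z)+A^*)$ is holomorphic on $\Re(z)>\alpha_0$, and since it is not identically zero, $F=\widehat{g}/D$ is meromorphic there with poles exactly at the zeros of $D$, giving part~(1).

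For the finiteness of the poles I would localize the zeros of $D$ in the subregion $\Re(z)>\frac{\ln|A^*|}{d}$. Two estimates confine them to a bounded set. As $\Re(z)\to+\infty$ the factor $e^{-zd}(\widehat{h_0}(z)+A^*)$ tends to $0$, so $D\to 1$ and no zeros occur for large $\Re(z)$. For $\Re(z)$ ranging over a compact interval but $|\Im(z)|\to\infty$, the Riemann--Lebesgue lemma (valid because $h_0e^{-\alpha_0 t}\in L^1$) gives $\widehat{h_0}(z)\to 0$, whence $|e^{-zd}(\widehat{h_0}(z)+A^*)|\to|A^*|e^{-d\Re(z)}$, which is strictly less than $1$ precisely when $\Re(z)>\frac{\ln|A^*|}{d}$. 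Thus $|D(z)|$ is bounded below away from zero outside a bounded rectangle, and a holomorphic function bounded away from zero at infinity has only finitely many zeros in that region. This also explains why finiteness is claimed only on $\Re(z)>\frac{\ln|A^*|}{d}$: in the complementary strip the oscillatory factor $A^*e^{-zd}$ dominates and can produce an infinite sequence of zeros, as is typical for delay equations.

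The asymptotic expansion~\eqref{asymptotic-expan} would then follow from the Laplace inversion formula $u(t)=\frac{1}{2\pi i}\int_{\gamma-i\infty}^{\gamma+i\infty}e^{zt}F(z)\,dz$ for $\gamma$ to the right of every pole, followed by shifting the contour to the vertical line $\Re(z)=\alpha$ with $\max\{\alpha_0,\tfrac{\ln|A^*|}{d}\}<\alpha$ chosen below all the $\lambda_k$. Each pole crossed contributes the residue $\operatorname{Res}(e^{zt}F(z),\lambda_k)=e^{\lambda_k t}p_k(t)$ with $p_k$ of degree $m_k-1$, and the leftover integral on $\Re(z)=\alpha$ is the remainder; the choice $\alpha>\tfrac{\ln|A^*|}{d}$ guarantees $|A^*e^{-zd}|<1$ on that line, keeping $1/D$ bounded and yielding the $O(e^{\alpha t})$ bound. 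The coefficient estimate comes from expressing the residues through $\widehat{g}$ and its derivatives at $\lambda_k$ divided by the nonvanishing derivatives of $D$, bounding $|\widehat{g}^{(j)}(\lambda_k)|$ by $\|ge^{-\alpha_0 t}\|_\infty$, and absorbing the remaining dependence on $D$ (that is, on $h_0$ and $A^*$) into the constant $C_h$.

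The main obstacle I anticipate is justifying the contour shift and the $O(e^{\alpha t})$ remainder in the presence of the non-decaying factor $e^{-zd}$: unlike the purely integral case, $1/D(z)$ does not decay as $|\Im(z)|\to\infty$, so convergence of the vertical integral cannot be extracted from $\widehat{g}$ alone and must be argued more carefully. Here the bounded-variation and $L^p$ ($p>1$) hypotheses are essential. Integration by parts (using bounded variation) gains a factor $1/z$ in $\widehat{g}$, and the Hausdorff--Young inequality applied to $h_0e^{-\alpha_0 t}\in L^p$ places $\widehat{h_0}$ in $L^{p'}$ along the line $\Re(z)=\alpha$; since on that line $1/D$ is bounded --- for $|\Im(z)|$ large because $\widehat{h_0}\to0$ and $|A^*e^{-zd}|=|A^*|e^{-\alpha d}<1$, and on the remaining compact part because $D$ is continuous and nonvanishing --- these ingredients combine to give absolute convergence and the remainder estimate. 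Controlling the horizontal segments when closing the contour requires the same decay input and constitutes the most delicate bookkeeping.
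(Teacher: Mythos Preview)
Your overall strategy---contour shift, residue contributions, Riemann--Lebesgue for the horizontal segments---matches the paper's, and your localization of the poles and treatment of the residue coefficients are fine. The gap is in your proposed resolution of the obstacle you correctly identify, namely the boundedness of the vertical-line integral $\int_{-R}^{R}e^{i\omega t}F(\alpha+i\omega)\,d\omega$. You suggest combining (i) a $1/z$ gain in $\widehat{g}$ from integration by parts via bounded variation, (ii) the uniform bound on $1/D$ along $\Re(z)=\alpha$, and (iii) Hausdorff--Young placing $\widehat{h_0}$ in $L^{p'}$. But (i) and (ii) together only give $|F(\alpha+i\omega)|\lesssim 1/|\omega|$, which is not integrable, and (iii) is of no direct use because $\widehat{h_0}$ sits in the \emph{denominator} $D$, not in the numerator. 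As stated, these ingredients do not combine to absolute convergence, and the $L^p$ hypothesis on $h$ remains unexploited.

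The paper's resolution (Lemma~\ref{contour-left}) supplies the missing idea: an algebraic decomposition $F=F_1+F_2$ obtained by expanding $1/D$ as a finite geometric-type sum in $\widehat{h_0}(z)e^{-zd}/(1-A^*e^{-zd})$. The remainder piece $F_1$ then carries $\widehat{h_0}(z)^N$ in the \emph{numerator}, so H\"older plus Hausdorff--Young on $\widehat{g}\cdot\widehat{h_0}^N$ yields genuine $L^1$ control along the line once $N$ is taken large enough to force the dual exponent into $[1,p]$. The finite sum $F_2$ is recognized explicitly as the Laplace transform of a function built from the delay recursion $u_f(t)=f(t)+A^*u_f(t-d)\mathds{1}_{\{t>d\}}$, so the inversion formula (Theorem~\ref{inversion}) applies to it directly and gives the required uniform-in-$R$ bound. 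Without some device of this kind that transfers powers of $\widehat{h_0}$ to the numerator, the remainder estimate does not close.
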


We leave the proof details for Section \ref{contour-thm}. From this result, the proof of Theorem \ref{stability-poles} is straightforward as we state in the following argument.

\begin{proof}[\textbf{Proof of Theorem \ref{stability-poles}}]
    If $\Phi_d$ defined in \eqref{poles-delay} has a zero with positive real part we observe that from the asymptotic expansion given by Theorem \ref{asymptotic-volterra 2} then $r^\phi(t)$ diverges when $t\to\infty$, which proves that $(n^*,r^*)$ is linearly unstable by choosing $\phi^0=n^0-n^*\not\equiv 0$.

    On the other hand if all the zeros $\Phi_d$ have negative real part, then we observe from Theorem \ref{asymptotic-volterra 2} that $r^\phi(t)\to0$ exponentially when $t\to\infty$. Therefore, we see that $\phi(t,a)$ in Equation \eqref{equation-phi-linearized} verifies that $\|\phi(t,a)\|_{TV}\to 0$ exponentially when $t\to\infty$ thanks to Duhamel's formula in \eqref{duhamel-phi-linearized} and by taking $\phi^0=n^0-n^*$, we get the linear stability of the steady state $(n^*,r^*)$ of the system \eqref{eqdelay}.
\end{proof}

In order to deduce Theorem \ref{stability-criterion} by applying Theorem \ref{stability-poles}, we study in more detail the function $F$ defined in \eqref{f} in the domain of absolute convergence of $\widehat{g}$.

The following technical lemma concerns the exponential decay of $g$ in \eqref{coeff-volterra-g} and the region of absolute convergence $\widehat{g}$.

\begin{lem}
\label{laplace-g}
Let $\mu>0$ 
be the spectral gap parameter of the linear operator $L_{r^*}$ (see Theorem \ref{doeblin-conv} and
Remark \ref{rem:spectral-gap}),
and  $g$ the function defined in \eqref{coeff-volterra-g}. Thus:
\begin{enumerate}
\item \label{1-g}
  $g(t)e^{-\alpha_0 t}\in L^\infty(\R^+)$ for $\alpha_0\ge-\mu$.
\item \label{2-g}
  $\widehat{g}(z)$ is analytic for $\Re(z)>-\mu$.
\item \label{3-g}
  $g(t)$ is bounded variation on compact sets, if $r^0$ is of bounded variation in $[-d,0]$.
  \end{enumerate}
\end{lem}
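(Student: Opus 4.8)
The plan is to split $g$ in \eqref{coeff-volterra-g} into its three summands and treat each separately, exploiting that the first two are built from neural activities of the \emph{linear} semigroup $e^{tL}$ acting on zero-mean data, to which Theorem \ref{doeblin-conv} applies. The decisive point is that both relevant initial data belong to the space $\mathcal{X}$ of \eqref{space-X}: the perturbation $\phi^0$ does by hypothesis, while the measure $G$ of \eqref{G} does because $\int_0^\infty G\,da=-\int_0^\infty\partial_r S(a,r^*)n^*(a)\,da+A^*=0$ by the definition \eqref{A*} of $A^*$. Writing $r^\nu_L(t)\coloneqq\int_0^\infty S(a,r^*)e^{tL}[\nu]\,da$ for the linear activity with datum $\nu$, Theorem \ref{doeblin-conv} then gives $|r^\nu_L(t)|\le Ce^{-\mu t}\|\nu\|_{TV}$ for all $\nu\in\mathcal{X}$; in particular the first summand of $g$ equals $r^{\phi^0}_L(t)$ and $h_0=r^G_L$ (see \eqref{coeff-volterra-h}), so both decay at rate $\mu$.

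For items \ref{1-g} and \ref{2-g} I would bound $g$ termwise. The first summand is $O(e^{-\mu t})$ by the above. The second, $\int_0^d h_0(t-s)(r^0(s-d)-r^*)\,ds$, obeys $|\,\cdot\,|\le Ce^{\mu d}e^{-\mu t}\int_0^d|r^0(s-d)-r^*|\,ds$, again $O(e^{-\mu t})$, since $t-s\ge t-d\ge 0$ on the domain of integration and $r^0\in\mathcal{C}[-d,0]$ is bounded. The third summand is supported in $[0,d]$ and bounded. Hence $|g(t)|\le Ce^{-\mu t}$ for $t\ge d$ and $g$ is bounded on $[0,d]$, so $g(t)e^{-\alpha_0 t}\in L^\infty(\R^+)$ whenever $\alpha_0\ge-\mu$, which is item \ref{1-g}. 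Item \ref{2-g} follows at once: $|g(t)|\le Ce^{-\mu t}$ makes $\widehat{g}(z)=\int_0^\infty e^{-zt}g(t)\,dt$ absolutely convergent for $\Re(z)>-\mu$, and analyticity there is obtained by differentiation under the integral sign.

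The heart of the lemma is item \ref{3-g}, where the main obstacle is transferring bounded variation from the data of the underlying renewal equation to the linear activity itself. My plan is to recall from Section \ref{sec:linear} that $r^\nu_L$ solves $r=g_\nu+h_{\mathrm{lin}}*r$ with kernel $h_{\mathrm{lin}}(t)=S(t,r^*)e^{-\int_0^t S(a,r^*)\,da}$ as in \eqref{coeff-volterra-linear-h} and forcing $g_\nu(t)=\int_0^\infty S(t+s,r^*)e^{-\int_s^{t+s}S(a,r^*)\,da}\,d\nu(s)$. Because $S$ is Lipschitz and bounded, $h_{\mathrm{lin}}$ is BV on compacts and the integrand defining $g_\nu$ is Lipschitz in $t$ uniformly in $s$, so $g_\nu$ is Lipschitz (hence BV on compacts) for any finite measure $\nu$. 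Standard Volterra theory for renewal equations with locally BV forcing and kernel (see \cite{gripenberg1990volterra,iannelli2017basic}) then yields that the solution $r^\nu_L$ is BV on compacts; applying this to $\nu=\phi^0$ handles the first summand of $g$ and to $\nu=G$ shows $h_0$ is BV on compacts.

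Finally, the third summand $A^*(r^0(t-d)-r^*)\mathds{1}_{[0,d]}(t)$ is BV on compacts exactly because $r^0$ is assumed BV on $[-d,0]$ (a shift and restriction of a BV function). For the second summand, with $w(s)\coloneqq r^0(s-d)-r^*$ bounded, the variation of $t\mapsto\int_0^d h_0(t-s)w(s)\,ds$ over a compact is estimated by $\|w\|_\infty\int_0^d V(s)\,ds$, where $V(s)$ denotes the variation of $h_0$ over the corresponding shifted interval, which is finite since $h_0$ is BV on compacts. Summing the three contributions proves item \ref{3-g}. I expect the only delicate step to be the appeal to Volterra theory to pass from BV data to a BV solution; the remaining estimates are direct.
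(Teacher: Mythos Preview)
Your proof is correct and follows essentially the same route as the paper: the same three-term decomposition of $g$, the same use of the spectral gap estimate from Theorem \ref{doeblin-conv} (applied to $\phi^0\in\mathcal{X}$ and $G\in\mathcal{X}$) to obtain the $e^{-\mu t}$ decay for items \ref{1-g}--\ref{2-g}, and the same reduction of item \ref{3-g} to the fact that $r^\nu_L$ solves the linear renewal equation \eqref{volterra} with Lipschitz coefficients \eqref{coeff-volterra-linear-g}--\eqref{coeff-volterra-linear-h}. The only cosmetic difference is that the paper bounds $g_1,g_2$ via $\|S\|_\infty\|e^{tL}[\cdot]\|_{TV}$ using \eqref{exp-conv-linear-n}, whereas you invoke the activity estimate \eqref{exp-conv-linear-r} directly; both are contained in Theorem \ref{doeblin-conv}.
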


\begin{proof}
  The spectral gap given by Theorem \ref{doeblin-conv} for the linear operator  $L=L_{r^*}$ means that
  there exist $C,\mu>0$ such that for  $\phi^0\in\mathcal{X}$ we have
  $$
  \|e^{tL}\phi^0\|_{TV}\le Ce^{-\mu t}\|\phi^0\|_{TV}.
  $$
  We use that decay to prove \ref{1-g} and \ref{2-g}.
    We write $g(t)=g_1(t)+g_2(t)+g_3(t)$ where the functions $g_i(t)$ are given by
    \begin{equation*}
        \begin{split}
            g_1(t)&\coloneqq \int_0^\infty S(a,r^*)e^{tL}[\phi^0](a)\,da\\
            g_2(t)&\coloneqq \int_0^d\int_0^\infty S(a,r^*)e^{(t-s)L}[G](a)(r^0(s-d)-r^*)\,da\,ds\\
            g_3(t)&\coloneqq A^*(r^0(t-d)-r^*)\mathds{1}_{[0,d]}(t),
        \end{split}
    \end{equation*}
    We show \ref{1-g}, proving the statement for each function $g_i$ with $i\in\{1,2,3\}$.
    For $g_1$ we have the following estimate
    $$|g_1(t)|e^{-\alpha_0 t}\le C\|S\|_\infty\|\phi^0\|_{TV}e^{-(\alpha_0+\mu)t},$$
    and similarly for $g_2$ (see \eqref{G} for the definition of $G$), we have the estimate given by
    \begin{equation*}
         \begin{split}
       |g_2(t)|e^{-\alpha_0 t}&\le \|S\|_\infty\|r^0-r^*\| _\infty e^{-\alpha_0 t}\int_0^d \|e^{-(t-s)L}[G]\|_{TV}\,ds\\
       &\le C\|S\|_\infty\|r^0-r^*\| _\infty\|G\|_{TV}e^{-\alpha_0 t}\int_0^d e^{-(t-s)\mu}\,ds\\
       &\le C\|S\|_\infty\|r^0-r^*\| _\infty\|G\|_{TV}\frac{e^{\mu d}-1}{\mu}e^{-(\alpha_0 +\mu)t}.
    \end{split}   
    \end{equation*}
    Observe that $g_3$ is compactly supported and the bound is straightforward. We then conclude that $g(t)e^{-\alpha_0 t}\in L^\infty(\R^+)$ for $\alpha_0\ge -\mu$.
    
    We now prove \ref{2-g}. For  $g_1$ we have the following estimate
    \begin{equation*}
    \begin{split}
       \int_0^\infty\left|g_1(t)e^{-tz}\right|dt&\le \|S\|_\infty\int_0^\infty e^{-t\Re(z)}\|e^{tL}\phi^0\|_{TV}dt\\
       &\le C\|S\|_\infty\|\phi^0\|_{TV}\int_0^\infty e^{-t(\Re(z)+\mu)}dt\\
       &\le C\|S\|_\infty\|\phi^0\|_{TV}\frac{1}{\Re(z)+\mu}<\infty.
    \end{split}
    \end{equation*}
   Similarly for $g_2$, we get the estimate
   \begin{equation*}
   \begin{split}
    \int_0^\infty\left|g_2(t)e^{-tz}\right|dt&\le \|S\|_\infty\|r^0-r^*\|_\infty\int_0^\infty e^{-t\Re(z)}\int_0^d \|e^{(t-s)L}[G]\|_{TV}ds\,dt\\
    &\le  C\|S\|_\infty\|r^0-r^*\|_\infty\|G\|_{TV}\frac{e^{\mu d}-1}{\mu}\int_0^\infty e^{-t(\Re(z)+\mu)}dt\\
    &\le C\|S\|_\infty\|r^0-r^*\|_\infty\|G\|_{TV}\frac{e^{\mu d}-1}{\mu}\frac{1}{\Re(z)+\mu}<\infty.
   \end{split}
    \end{equation*}
    Finally, the Laplace transform of $g_3$ is defined for all $z\in\mathds{C}$ since $g_3$ is compactly supported and therefore $\widehat{g}(z)$ is defined for $\Re(z)>-\mu$.

    To conclude we prove \ref{3-g}. Indeed, we observe that the function $r(t)=\int_0^\infty S(a,r^*)e^{tL}\nu(a)\,da$ is of bounded variation in compacts sets of $t$,  for all $\nu\in\mathcal{M}(\R^+)$, since it is a solution of the convolution equation \eqref{volterra}, whose coefficients given in \eqref{coeff-volterra-linear-g}-\eqref{coeff-volterra-linear-h} are Lipschitz. This implies that $g_1(t),\,g_2(t)$ are of bounded variation in compacts sets and since $r^0$ is of bounded variation in $[-d,0]$, the same conclusion holds for $g_3(t)$ and then for $g(t)$. 
    
\end{proof}
\begin{rmk}
  We notice that the function
  $$g_1(t)=\int_0^\infty S(a,r^*)e^{tL}[\phi^0](a)\,da=r_L^{\phi_0}(t), $$
corresponds to the neural activity of the linear system, given by the linear operator $L_{r^*}$ with initial condition $\phi^0$. Moreover, $g_2$ can be bounded as follows
$$
g_2(t)\le \Vert r^0-r^*\Vert_\infty
\int_0^d\int_0^\infty S(a,r^*)e^{(t-s)L}[G](a)\,da\,ds=
\Vert r^0-r^*\Vert_\infty \int_0^dr_L^{G}(t-s) \,ds, 
$$
where $r_L^{G}(t)$ is the neural activity of the linear system, given by the linear operator $L_{r^*}$ with initial condition $G$. 

So Lemma \ref{laplace-g} holds under the needed assumptions for $S$ used in Theorem \ref{doeblin-conv} (see remark \ref{rem:spectral-gap}).
\end{rmk}
The following lemma gives us an explicit formula for Laplace transform
of $h_0$ (see \eqref{coeff-volterra-h}),
$\widehat{h_0}$, and its relation with the slope of tangent line of
the function $\frac{1}{I(r)}$ at $r=r^*$, i.e. the slope of tangent line of
the function $\frac{1}{I(r)}$ at the points of intersection with the bisector of the first quadrant (since $r^*I(r^*)=1$).

\begin{lem}
  \label{laplace-h}
  Let $\mu>0$  be the spectral gap parameter of the linear operator $L_{r^*}$
  (see Theorem \ref{doeblin-conv} and Remark \ref{rem:spectral-gap}).
  Then:
  \begin{enumerate}
  \item \label{1-h}  $h_0(t)e^{-\alpha_0 t}\in L^\infty(\R^+)$
    for $\alpha_0\ge-\mu$. 
  \item \label{2-h}
    $h_0(t)$ is of bounded variation in compact sets.

  \item \label{3-h}    
    $\widehat{h_0}(z)$ is defined for $\Re(z)>-\mu$,
    in particular $h_0(t)e^{-\alpha_0 t}\in L^1(\R^+)$ for $\alpha_0>-\mu$.
    \item \label{4-h} The following formula holds
    \begin{equation}
      \label{h0hat}
        \displaystyle\widehat{h_0}(z)+A^*= \frac{r^*\int_0^\infty e^{-\int_0^a S(s,r^*)ds}\int_0^a e^{-z(a-a')}\partial_r S(a',r^*)\,da'\,da}{\int_0^\infty e^{-\int_0^a S(s,r^*)ds}e^{-za}\,da}.
    \end{equation}
    \item \label{5-h}
    For the function $I(r)$ defined in \eqref{Int-r} the following equality holds
    \begin{equation}
       \label{h0hat0}
        \frac{d}{dr}\left.\left(\frac{1}{I(r)}\right)\right|_{r=r^*}=\widehat{h_0}(0)+A^*.
    \end{equation}
        \end{enumerate}
\end{lem}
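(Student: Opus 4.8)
The plan is to obtain parts \ref{1-h}--\ref{3-h} directly from the spectral gap of Theorem \ref{doeblin-conv}, reduce the explicit formula of part \ref{4-h} to a Laplace-transform computation for the linear system of Section \ref{sec:linear}, and then deduce part \ref{5-h} by setting $z=0$. The first observation I would make is that $G$ defined in \eqref{G} lies in the space $\mathcal{X}$ of \eqref{space-X}: indeed $\int_0^\infty G\,da = -\int_0^\infty \partial_r S(a,r^*)n^*(a)\,da + A^* = 0$ by the definition \eqref{A*} of $A^*$. Writing $h_0(t) = \int_0^\infty S(a,r^*)e^{tL}[G](a)\,da$ with $L=L_{r^*}$ and applying the spectral gap gives $|h_0(t)| \le \|S\|_\infty\|e^{tL}[G]\|_{TV} \le C\|S\|_\infty\|G\|_{TV}e^{-\mu t}$. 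Parts \ref{1-h} and \ref{3-h} then follow exactly as the corresponding estimates for $g_1$ in the proof of Lemma \ref{laplace-g} (the $L^\infty$ bound after multiplying by $e^{-\alpha_0 t}$ with $\alpha_0\ge-\mu$, and $\int_0^\infty|h_0(t)|e^{-t\Re(z)}\,dt\le C/(\Re(z)+\mu)<\infty$ for $\Re(z)>-\mu$), while part \ref{2-h} is the remark following Lemma \ref{laplace-g} applied with $\nu=G$: since $h_0=r^G_L$ solves the Volterra equation \eqref{volterra} with the Lipschitz coefficients \eqref{coeff-volterra-linear-g}--\eqref{coeff-volterra-linear-h}, it is of bounded variation on compact sets.

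The heart of the proof is part \ref{4-h}. Since $h_0=r^G_L$ is the activity of the linear system with initial datum $G$, it satisfies the Volterra equation \eqref{volterra} with kernel $h$ as in \eqref{coeff-volterra-linear-h} and source $g_G$ given by \eqref{coeff-volterra-linear-g} with $\tilde n^0=G$, so that $\widehat{h_0}(z)=\widehat{g_G}(z)/(1-\widehat{h}(z))$. I would split $G=G_c+A^*\delta_0$ with $G_c(a)=-\partial_r S(a,r^*)n^*(a)$; the Dirac part contributes $\widehat{g_{A^*\delta_0}}=A^*\widehat{h}$, which produces the key algebraic cancellation
$$
\widehat{h_0}(z)+A^* = \frac{\widehat{g_{G_c}}(z)+A^*\widehat{h}(z)+A^*(1-\widehat{h}(z))}{1-\widehat{h}(z)} = \frac{\widehat{g_{G_c}}(z)+A^*}{1-\widehat{h}(z)},
$$
removing the $\delta_0$ contribution entirely. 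It then remains to compute $\widehat{g_{G_c}}(z)+A^*$: after the change of variables $a=t+s$, an integration by parts in $a$ using $S(a,r^*)e^{-\int_0^a S}=-\tfrac{d}{da}e^{-\int_0^a S}$, and Fubini on $\{0\le a'\le a\}$, I expect
$$
\widehat{g_{G_c}}(z)+A^* = z\,r^*\int_0^\infty e^{-\int_0^a S(s,r^*)ds}\int_0^a e^{-z(a-a')}\partial_r S(a',r^*)\,da'\,da,
$$
where the boundary term of the integration by parts at $a=s$ yields exactly $-A^*$, cancelling the additive $A^*$. Dividing by $1-\widehat{h}(z)=z\int_0^\infty e^{-\int_0^a S(s,r^*)ds}e^{-za}\,da$ from \eqref{poles-linear} cancels the factor $z$ and gives \eqref{h0hat}, valid first for $z\ne0$ and then at $z=0$ by the analyticity from part \ref{3-h}. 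As a byproduct this identifies the eigenvalue equation \eqref{eigenvalue-eq} with $\Phi_d(\lambda)=0$.

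Part \ref{5-h} then follows by evaluating \eqref{h0hat} at $z=0$ and comparing with a direct computation. Differentiating $I(r)=\int_0^\infty e^{-\int_0^a S(s,r)ds}\,da$ gives $I'(r)=-\int_0^\infty e^{-\int_0^a S(s,r)ds}\int_0^a \partial_r S(s,r)\,ds\,da$, so using $r^*I(r^*)=1$ from \eqref{Int-r},
$$
\frac{d}{dr}\left.\left(\frac{1}{I(r)}\right)\right|_{r=r^*} = -(r^*)^2 I'(r^*) = (r^*)^2\int_0^\infty e^{-\int_0^a S(s,r^*)ds}\int_0^a \partial_r S(a',r^*)\,da'\,da,
$$
which coincides with $\widehat{h_0}(0)+A^*$ obtained by setting $z=0$ in \eqref{h0hat} and using $\int_0^\infty e^{-\int_0^a S(s,r^*)ds}\,da=I(r^*)=1/r^*$.

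The main obstacle I anticipate is the bookkeeping in part \ref{4-h}: correctly propagating the Dirac contribution of $G$ through the Volterra formulation, and arranging the integration by parts so that its boundary term supplies precisely the $-A^*$ that cancels, together with justifying the Fubini swap and the removable singularity of the quotient at $z=0$. Parts \ref{1-h}--\ref{3-h} and part \ref{5-h} are routine once Theorem \ref{doeblin-conv} and the algebra of part \ref{4-h} are in place.
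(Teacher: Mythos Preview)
Your proof is correct. For parts \ref{1-h}--\ref{3-h} and \ref{5-h} you do exactly what the paper does. For part \ref{4-h} you take a genuinely different route, and I checked your computation: with $G_c(a)=-\partial_rS(a,r^*)n^*(a)$ and the change of variables $a=t+s,\,a'=s$, the integration by parts in $a$ indeed produces the boundary term $-r^*\int_0^\infty\partial_rS(a',r^*)e^{-\int_0^{a'}S}\,da'=-A^*$, and the remaining term carries the factor $z$ that cancels against $1-\widehat{h}(z)=z\int_0^\infty e^{-za}e^{-\int_0^aS}\,da$.

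The paper instead computes $\widehat{h_0}(z)$ via the resolvent: it writes $\widehat{h_0}(z)=\int_0^\infty S(a,r^*)\,(zI-L)^{-1}[G](a)\,da$ and solves the ODE for $f=(zI-L)^{-1}[G]\in\mathcal{X}$ explicitly, obtaining $\widehat{h_0}(z)=f(0)-A^*$ directly from the boundary condition $f(0)=\int S f\,da+A^*$, with $f(0)$ fixed by $\int_0^\infty f=0$. This is shorter because the $A^*$ appears structurally rather than through a cancellation, and no integration by parts or $z=0$ removable-singularity argument is needed. Your approach, on the other hand, never leaves the Volterra/Laplace machinery of Section \ref{sec:linear} and requires no appeal to the abstract resolvent formula or the domain of $L$; it is a nice illustration that the identity \eqref{h0hat} is already encoded in the kernels \eqref{coeff-volterra-linear-g}--\eqref{coeff-volterra-linear-h}. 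Both arguments are of comparable length once written out; the paper's is a bit more conceptual, yours a bit more self-contained.
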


\begin{proof}
  The proof 
  of \ref{1-h} and \ref{2-h}  are similar to the respective
  one of Lemma \ref{laplace-g}.

  We prove the assertions involving $\widehat{h_0}$. By Theorem \ref{doeblin-conv} applied to the linear
  operator $L=L_{r*}$, we observe that $\widehat{h_0}(z)$ is defined for $\Re(z)>-\mu$ since
 \begin{equation*}
    \begin{split}
      \int_0^\infty\left|h_0(t)e^{-tz}\right|dt = \int_0^\infty\left|r^G_L(t)e^{-tz}\right|dt 
      \le C\|G\|_{TV}\int_0^\infty e^{-t(\Re(z)+\mu)} dt=\frac{C\|G\|_{TV}}{\Re(z)+\mu}. 
    \end{split}
    \end{equation*}
    Moreover, from Theorem \ref{doeblin-conv} we get that $\|e^{tL}\|\le e^{-\mu t}$ in the operator norm and hence for $\Re(z)>-\mu$ we can apply the formula for the resolvent to deduce that
    \begin{equation*}
        \begin{split}
            \widehat{h_0}(z)&=\int_0^\infty e^{-tz}\int_0^\infty S(a,r^*)e^{tL}[G]\,da\,dt\\
            &=\int_0^\infty S(a,r^*)\int_0^\infty e^{tL}[G]e^{-tz}\,dt\,da\\
            &=\int_0^\infty S(a,r^*)(zI-L)^{-1}[G](a)\,da,\\
            &=\int_0^\infty S(a,r^*)f(a)\,da,
        \end{split}
    \end{equation*}
    where $f=(zI-L)^{-1}[G]\in D(L)\cap\mathcal{X}$ is the solution
    of the following equation in the sense of distributions
    \begin{equation*}
       f' + (S(a,r^*)+z)f=G+\delta_0\int_0^\infty S(a,r^*)f(a)\,da,
    \end{equation*}
    or equivalently
    \begin{equation*}
     \begin{cases}
         f'(a)+(S(a,r^*)+z)f=-\partial_r S(a,r^*) n^*(a),\\
         f(0)=\int_0^\infty S(a,r^*)f(a)\,da+A^*,\quad\int_0^\infty f(a)\,da=0,\\
     \end{cases}   
    \end{equation*}
    whose solution is given by
    \begin{equation*}
        \begin{dcases}
        f(a)=f(0)e^{-\int_0^a S(s,r^*)ds}e^{-za}-\int_0^a e^{-\int_{a'}^a S(s,r^*)ds}e^{-z(a-a')}\partial_r S(a',r^*)n^*(a')da',\\
       f(0)=\frac{\int_0^\infty\int_0^a e^{-\int_{a'}^a S(s,r^*)ds}e^{-z(a-a')}\partial_r S(a',r^*)n^*(a')da'da}{\int_0^\infty e^{-\int_0^a S(s,r^*)ds}e^{-za}da}.
        \end{dcases}
    \end{equation*}
    Therefore we get the following expression for $\widehat{h_0}(z)$,
    using the initial condition of $f$:
    \begin{equation*}
    \begin{split}
    \displaystyle
        \widehat{h_0}(z)&=\int_0^\infty S(a,r^*)f(a)\,da=f(0)-A^*\\
            &=\frac{r^*\int_0^\infty e^{-\int_0^a S(s,r^*)ds}\int_0^a e^{-z(a-a')}\partial_r S(a',r^*)\,da'\,da}{\int_0^\infty e^{-\int_0^a S(s,r^*)ds}e^{-za}\,da}-A^*,
        \end{split}
    \end{equation*}
    and we obtain \eqref{h0hat}. In particular, for $z=0$, since
    $r^*=1/I(r^*)$ with $I$ defined in \eqref{Int-r}, we get
    \begin{equation*}
        \begin{split}
         \widehat{h_0}(0)+A^*&=\frac{\int_0^\infty e^{-\int_0^a S(s,r^*)ds}\int_0^a\partial_r S(a',r^*)\,da'\,da}{I(r^*)\int_0^\infty e^{-\int_0^a S(s,r^*)ds}\,da}.\\
         &= \frac{1}{I(r^*)^2}\int_0^\infty e^{-\int_0^a S(s,r^*)ds}\left(\int_0^a \partial_r S(a',r^*)da'\right)da,
        \end{split}
    \end{equation*}
    and for the derivative of $I$ at $r=r^*$
    \begin{equation*}
    \frac{d}{dr}\left.\left(\frac{1}{I(r)}\right)\right|_{r=r^*}=-\frac{I'(r^*)}{I(r^*)^2}\\
    = \frac{1}{I(r^*)^2}\int_0^\infty e^{-\int_0^a S(s,r^*)ds}\left(\int_0^a \partial_r S(a',r^*)da'\right)da.
    \end{equation*}
    which proves \eqref{h0hat0} and concludes the proof.    
\end{proof}

In order to find the zeros of the function $\Phi_d(z)$ defined in
\eqref{poles-delay} for $\Re(z)>-\mu$, we observe that the problem can be equivalently written using the formula \eqref{h0hat} obtained in Lemma \ref{laplace-h} as follows,

\begin{equation}
  \label{poles-delay2}
    e^{zd}=\frac{r^*\int_0^\infty e^{-\int_0^a S(s,r^*)ds}\int_0^a e^{-z(a-a')}\partial_r S(a',r^*)\,da'\,da}{\int_0^\infty e^{-\int_0^a S(s,r^*)ds}e^{-za}\,da}.
  \end{equation}
  Before analyzing the real part of the solutions to Equation
  \eqref{poles-delay2} to study the linear
  stability of the system \eqref{eqdelay}, we provide the following remark.

  \begin{rmk}[Relation between poles and eigenvalues]
    We note that \eqref{poles-delay2}
    is precisely the equation \eqref{eigenvalue-eq} of the eigenvalue problem.
    Therefore, an eigenvalue is a pole of the function $F$, defined in \eqref{f}.
    On the other hand, a pole of $F$, which is zero of the function
    $\Phi_d(z)$ defined in \eqref{poles-delay}, corresponds to an eigenvalue in the domain of convergence of $F$, i.e, in the domain of $\widehat{h_0}(z) $ and $\widehat{g}$.
\end{rmk}

Lemma \ref{laplace-h} gives the key argument to prove the stability criterion \ref{stability-criterion}, involving the constant $A^*$ and the derivative of $\frac{1}{I(r)}$ at $r=r^*$ that determine the asymptotic behavior of the linearized system \eqref{linearized}. With all these necessary lemmas, we now can prove Theorem \ref{stability-criterion}.

\begin{proof}[Proof of Theorem \ref{stability-criterion}]

  \textbf{Proof of point 1.} From Lemma \ref{laplace-h}, the condition of the derivative of
  $\frac{1}{I(r)}$ at $r=r^*$ means that
   $$ \frac{d}{dr}\left.\left(\frac{1}{I(r)}\right)\right|_{r=r^*}=\widehat{h_0}(0)+A^*>1$$
   and thus the function $\Phi_d(z)=1-e^{-zd}\left(\widehat{h_0}(z)+A^*\right)$  (see \eqref{poles-delay})
   satisfies that $\Phi_d(0)<0$. 
   Since $\Phi_d(x)$ is real and analytic for $x\ge0$ with $\lim_{x\to\infty} \Phi_d(x)=1$, we conclude that there exists $\bar{x}>0$ such that $\Phi_d(\bar{x})=0$ and therefore Equation \eqref{eigenvalue-pb} has a positive eigenvalue. By Theorem \ref{stability-poles}, we get that $(n^*,r^*)$ is linearly unstable.

   \textbf{Proof of point 2.} For the second case observe that when $z$ is purely imaginary, i.e. $z=i\omega$ for $w\in\R$, the following inequality holds for the function $f(z)\coloneqq e^{-zd}(\widehat{h_0}(z)+A^*)$
   \begin{equation*}
     |f(z)|= |e^{-i\omega d}(\widehat{h_0}(i\omega)+A^*)|\\
     \le  \left|\int_0^\infty h_0(t)e^{-it\omega}\,dt\right|+|A^*|\\
     \le \int_0^\infty|h_0(t)|\,dt+|A^*|<1.
   \end{equation*}
   Notice that $f$ is analytic on the half-plane $\{z\colon\Re(z)\ge0\}$ and for $z=\alpha+i\omega$ with $\alpha\ge 0$, we have that
   $$|f(\alpha+i\omega)|\le e^{-\alpha d}(|\widehat{h_0}(\alpha+i\omega)|+|A^*|)$$
   and thus $\displaystyle\limsup_{\|(\alpha,\omega)\|\to\infty}|f(\alpha+i\omega)|\le |A^*|<1$. As $|f(z)|<1$ on the imaginary axis, from the maximum principle we deduce that $|f(z)|<1$ for all $z$ with $\Re(z)\ge0$. This means $|\Phi_d(z)|>0$ in the positive half plane and therefore all its roots have necessarily negative real part. 
   
   Since $|A^*|<1$ we get that $\frac{\ln |A^*|}{d}<0$. Choose $\alpha\in\left(\frac{\ln |A^*|}{d},0\right)$ so that from Theorem \ref{stability-poles} we get that $(n^*,r^*)$ asymptotically stable.

    \textbf{Proof of point 3.} We make the proof for the case $A^*>1$, while the case of $A^*<-1$ has a similar argument. 

    Observe that zeros of the function $f_1(z)\coloneqq 1-A^*e^{-zd}$ are given by
    $$z_k=\frac{\ln A^*}{d}+\frac{2k\pi i}{d},\quad k\in\Z.$$
    For $k\in\Z$ we  consider the disc $D_k$ with center $z_k$ and radius $\frac{\pi}{d}$,
    whose boundary $\partial D_k$ is given by
    $$
    \partial D_k=\left\{z=z_k+\frac{\pi}{d}e^{i\theta}\colon \theta\in [0,2\pi]\right\}
      $$
    and the following inequality holds for $z\in \partial D_k$
    $$
    |f_1(z)|=\left|1-e^{-\pi e^{i\theta}}\right|
    \ge
    \eta\coloneqq \inf_{\theta\in[0,2\pi]}\left|1-e^{-\pi e^{i\theta}}\right|>0.
    $$
    Consider $f_2(z)\coloneqq -\widehat{h_0}(z)e^{-zd}$ and observe that $f_2(z)\to 0$ when $|\Im(z)|\to\infty$ uniformly in the stripe $\{z\in \mathds{C}\colon \Re(z)\in[\alpha,\beta]\}$, since from Riemann-Lebesgue lemma
    we get, denoting by $\mathcal{F}_t[\cdot]$ the Fourier transform, 
    $$\lim_{|\omega|\to\infty}\widehat{h_0}(s+i\omega)=\sqrt{2\pi}\lim_{|\omega|\to\infty}\mathcal{F}_t[e^{-st}h_0(t)](\omega)=0$$
    and the convergence can be made uniform for $s\in[\alpha,\beta]$. We conclude that there exists $k_0\in\N$ such that for all $k\in\Z$ with $|k|>k_0$, the following inequality holds
    $$|f_2(z)|<\eta\quad\forall z\in \partial D_k,$$
    and hence we get
    $$|f_1(z)|>|f_2(z)|\quad\forall z\in \partial D_k.$$
    By Rouché's theorem, we deduce that $f_1(z)+f_2(z)=1-A^*e^{-zd}-h_0(z)e^{-zd}$ has the exactly the same number of zeros that $f_1(z)=1-A^*e^{-zd}$ in $D_k$ and therefore the function \eqref{poles-delay} has infinitely many roots with positive real part. Moreover, when $|\Im(z)|\to\infty$, these roots approach to the zeros of the function $f_1$, which are contained line 
    $$
    \mathcal{L}=\left\{z\in\mathds{C}\colon \Re(z)=\frac{\ln A^*}{d}\right\}
    $$
    We conclude that Equation \eqref{eigenvalue-pb} has an eigenvalue with positive
    real part and then $(n^*,r^*)$ is linearly unstable by Theorem \ref{stability-poles}.

    \textbf{Proof of point 4.} Assume that $|A^*|<1$ and that $(n^*,r^*)$ is linearly stable for $d=0$. This means that the function $\Phi_0$ defined in \eqref{poles-delay} has no zeros in the set $X_\alpha\coloneqq\{z\in\mathds{C}:\Re(z)>-\alpha\}$ for some $\alpha>0$. We prove that this assertion remains true for the function $\Phi_d$ when $d>0$ is small enough.

    Observe that $\widehat{h_0}(z)\to0$ when $\Re(z)\to\infty$, implying that $\Phi_d(z)=1-e^{-zd}(\widehat{h_0}(z)+A^*)\to 1$, so the possible zeros of $\Phi_d$ lie in a vertical stripe. On the other hand, by replicating the argument of point \textbf{3.} of Rouché's theorem, we know that the zeros of $\Phi_d(z)$ approaches the zeros of the function $f_1(z)=1-A^*e^{-zd}$ when $|\Im(z)|\to\infty$, which have negative real part since $|A^*|<1$. Therefore for $0<\beta<\min\{\alpha,-\frac{\ln|A^*|}{d}\}$, the zeros of $\Phi_d$ contained in $X_{\beta}\subset X_{\alpha}$ must be in a compact set. Since $\Phi_d\to\Phi_0$ uniformly in compact sets when $d\to0$ and $\Phi_0$ does not have any zeros in $X_\beta$, we obtain that for $d>0$ small enough $\Phi_d$ does not have any zeros in $X_\beta$ either, in particular with positive real part. Finally, we apply Theorem \ref{stability-poles} as in the proof of point \textbf{2.} to conclude that the equilibrium $(n^*,r^*)$ is linearly asymptotically stable.
\end{proof}

\subsection[Application: System with an absolute refractory period]{Application: System with an absolute refractory period}
\label{subsec-refractory}

In this subsection we apply the stability criterion of Theorem \ref{stability-criterion} when the system has an absolute refractory period, that is, $S$ has the form given in \eqref{S-refrac}:
$$
S(a,r)=\varphi(r)\mathds{1}_{\{a>\sigma\}},
$$    
where the constant $\sigma>0$ is  the refractory period and $\varphi:\R^+\to\R^+$ is a smooth, Lipschitz and uniformly bounded from below. 
This implies that neurons are unable to spike
while $a<\sigma$,  but after the refractory period is completed, the discharge occurs at a
firing coefficient $\varphi(r)$.

In this situation the equation for the steady states \eqref{Int-r} is given by
\begin{equation}
\label{eq-refract}
    r^*\left(\frac{\sigma\varphi(r^*)+1}{\varphi(r^*)}\right)=1,
\end{equation}
and the constant $A^*$ is computed as follows
\begin{equation}
 A^*=\int_0^\infty \partial_rS(a,r^*)n^*(a) \, da=
  r^*\int_0^\infty \partial_rS(a,r^*)e^{-\int_0^a S(a',r^*) \, da'} \, da=
  r^*\frac{\varphi'(r^*)}{\varphi(r^*)}    
\end{equation}
so that the equation $\Phi_d(z)=0$, or equivalently  Equation \eqref{poles-delay2}, reduces to
  \begin{equation}
  \label{poles-refact}
    e^{zd}=\frac{A^*z}{z+\varphi(r^*)-\varphi(r^*)e^{-\sigma z}}.
\end{equation}

We also assert that the characterization of stability through the poles given in Equation \eqref{poles-delay2} and the asymptotic expansion obtained in
Theorem \ref{asymptotic-volterra 2} are also valid in the case of instantaneous transmission of Equation \eqref{eqnodelay}. In the same spirit of Theorem \ref{stability-criterion}, we assert the corresponding stability criterion for the case with absolute refractory period for all $d\ge0$.

\begin{thm}[Stability criterion for the case with absolute refractory period]
\label{stability-refrac} Assume that $S$ satisfies \eqref{S-refrac}. And consider 
a steady state $(n^*,r^*)$ of the non-linear system  \eqref{eqdelay}. Then, the following assertions hold:
    \begin{enumerate}
        \item[1.] If $A^*<1$ (or equivalently $\varphi'(r^*)<\frac{\varphi(r^*)}{r^*}$), then the equilibrium $(n^*,r^*)$ is linearly asymptotically stable for $d=0$. In particular, when $|A^*|<1$ the equilibrium remains linearly asymptotically stable for sufficiently small $d>0$. Moreover, there exists $\ell\in(0,1)$ such that when $|A^*|<\ell$, the equilibrium is linearly asymptotically stable for all $d>0$. On the other hand, when $A^*<-1$ the equilibrium is linearly unstable for all $d>0$.
        \item[2.] If $1<A^*<1+\sigma\varphi(r^*)$ (or equivalently $\frac{\varphi(r^*)}{r^*}<\varphi'(r^*)<\left(\frac{\varphi(r^*)}{r^*}\right)^2$), then the equilibrium $(n^*,r^*)$ is linearly unstable for all $d\ge0$. Furthermore, when $A^*=1+\sigma\varphi(r^*)$ then the steady state is linearly unstable for $d>0$.
        \item[3.]
        If $A^*>1+\sigma\varphi(r^*)$ (or equivalently $\varphi'(r^*)>\left(\frac{\varphi(r^*)}{r^*}\right)^2$), then the equilibrium $(n^*,r^*)$ is linearly asymptotically stable for $d=0$ and linearly unstable for $d>0$. 
    \end{enumerate}
\end{thm}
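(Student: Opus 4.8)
The plan is to reduce every assertion to the explicit characteristic equation \eqref{poles-refact} and then treat each regime either by invoking the relevant point of Theorem \ref{stability-criterion} or, for the statements at $d=0$, by a direct inspection of the roots. First I would record the algebraic identities special to \eqref{S-refrac}. Since $\int_0^a S(s,r)\,ds=\varphi(r)\max(a-\sigma,0)$, one has $I(r)=\sigma+1/\varphi(r)$, hence $1/I(r)=\varphi(r)/(\sigma\varphi(r)+1)$ and the steady-state relation \eqref{eq-refract} becomes $\sigma\varphi(r^*)+1=\varphi(r^*)/r^*$. Differentiating $1/I$ and using Lemma \ref{laplace-h} gives
\[
\frac{d}{dr}\Big(\tfrac1{I(r)}\Big)\Big|_{r=r^*}=\widehat{h_0}(0)+A^*=\frac{A^*}{1+\sigma\varphi(r^*)},
\]
together with $A^*=r^*\varphi'(r^*)/\varphi(r^*)$. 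Combined with $1+\sigma\varphi(r^*)=\varphi(r^*)/r^*$, these two formulas yield the stated equivalences between the hypotheses on $A^*$ and on $\varphi'(r^*)$, and they identify $A^*>1+\sigma\varphi(r^*)$ with the condition $\frac{d}{dr}(1/I)|_{r^*}>1$ of point 1 of Theorem \ref{stability-criterion}. Finally, from \eqref{h0hat} one sees $\widehat{h_0}(z)=A^*\,\widehat{\tilde h}(z)$ with $\widehat{\tilde h}$ independent of $A^*$, so $h_0=A^*\tilde h$ and $\|h_0\|_{L^1}=|A^*|\,\|\tilde h\|_{L^1}$.

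With these identities most claims are immediate. The instability for $d>0$ in cases 2 and 3 and at the boundary $A^*=1+\sigma\varphi(r^*)$ all follow from point 3 of Theorem \ref{stability-criterion}, since each of these hypotheses gives $A^*>1$, hence $|A^*|>1$ (for case 3 one may equally invoke point 1). In case 1, the instability for $A^*<-1$ is again point 3; the stability for small $d>0$ when $|A^*|<1$ is point 4, once $d=0$ stability has been established; and the existence of $\ell\in(0,1)$ comes from point 2 via the scaling above: the hypothesis $|A^*|+\|h_0\|_{L^1}<1$ reads $|A^*|(1+\|\tilde h\|_{L^1})<1$, so $\ell:=1/(1+\|\tilde h\|_{L^1})$ does the job.

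It remains to settle the three statements at $d=0$, for which the pole characterisation is valid also for \eqref{eqnodelay}. Setting $d=0$ in \eqref{poles-refact} and clearing the denominator, the genuine (nonzero) zeros of $\Phi_0$ are the nonzero roots of $z(A^*-1)=\varphi(r^*)(1-e^{-\sigma z})$, the value $z=0$ being a removable singularity. Writing $z=u+iv$ and splitting into real and imaginary parts gives $u(A^*-1)=\varphi(r^*)(1-e^{-\sigma u}\cos\sigma v)$ and $v(A^*-1)=\varphi(r^*)e^{-\sigma u}\sin\sigma v$. For $A^*<1$ (case 1) I rewrite the real part as $u(1-A^*)+\varphi(r^*)(1-e^{-\sigma u}\cos\sigma v)=0$, a sum of two nonnegative terms when $u\ge0$, which forces $u=0$ and then $v=0$; thus all nonzero roots lie in $\{\Re z<0\}$ and Theorem \ref{stability-poles} gives $d=0$ stability. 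For $1<A^*<1+\sigma\varphi(r^*)$ (case 2) I instead examine the positive real axis: $x\mapsto\varphi(r^*)(1-e^{-\sigma x})-x(A^*-1)$ vanishes at $0$, has positive derivative at $0$ because $A^*-1<\sigma\varphi(r^*)$, and tends to $-\infty$, so it has a root $x>0$, producing a positive eigenvalue and hence instability.

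The delicate point, which I expect to be the crux, is the $d=0$ stability in case 3: when $A^*>1+\sigma\varphi(r^*)$ one has $A^*-1>0$, so the real-part argument of case 1 breaks down. Here the imaginary part is decisive. If a root had $v\neq0$, that equation would read $\frac{\sin(\sigma v)}{\sigma v}=\frac{(A^*-1)e^{\sigma u}}{\sigma\varphi(r^*)}$; the left-hand side never exceeds $1$, whereas for $u\ge0$ the right-hand side is at least $\frac{A^*-1}{\sigma\varphi(r^*)}>1$, a contradiction. Roots with $v=0$ are excluded by monotonicity, since the derivative of $\varphi(r^*)(1-e^{-\sigma x})-x(A^*-1)$ is negative for $x\ge0$ in this regime, leaving only $z=0$. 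Hence every nonzero root of $\Phi_0$ satisfies $\Re z<0$ and Theorem \ref{stability-poles} yields $d=0$ stability, while point 1 (equivalently point 3) of Theorem \ref{stability-criterion} gives instability for $d>0$. The only genuinely new estimate in the whole argument is this comparison $\sin(\sigma v)/(\sigma v)\le1<(A^*-1)e^{\sigma u}/(\sigma\varphi(r^*))$.
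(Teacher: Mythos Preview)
Your proof is correct and follows essentially the same route as the paper: for $d>0$ you invoke the appropriate items of Theorem \ref{stability-criterion}, and for $d=0$ you analyze the real and imaginary parts of the cleared characteristic equation $z(A^*-1)=\varphi(r^*)(1-e^{-\sigma z})$ in each regime, with the key estimate in case 3 being the comparison $\sin(\sigma v)/(\sigma v)\le 1<(A^*-1)/(\sigma\varphi(r^*))$ for $u\ge0$, which is exactly the paper's bound $\sigma\omega\le e^{-\sigma\alpha}\sin(\sigma\omega)$ rewritten. One small improvement over the paper is your choice $\ell=1/(1+\|\tilde h\|_{L^1})$ via the factorisation $h_0=A^*\tilde h$: the paper writes $\ell=1-\int_0^\infty|h_0|$, which still depends on $A^*$ and so is slightly circular as stated; your formulation makes the existence of a fixed $\ell\in(0,1)$ transparent.
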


Observe that we stated the hypothesis of the theorem in terms of $A^*,\sigma$ or in terms of $\varphi'(r^*)$, whose equivalence is a consequence of the formula for steady states in \eqref{eq-refract}.
We also highlight that Theorem \ref{stability-refrac} is more comprehensive than Theorem \ref{stability-criterion}, in the sense that it gives information on the stability for any value of $A^*$, except at $A^*=1$.

Theorem \ref{stability-refrac} is consistent with the previous results of \cite{pakdaman2009dynamics,mischler2018weak,mischler2018,torres2021elapsed,perthame2025strongly} on the convergence to steady state when the firing coefficient is written in the form $S(a,br)$ with certain growth conditions when $b\to\infty$, which corresponds to a strongly connected regime. It is important to remark that the criterion does not depend on the monotony of $S$ with respect to $r$, so the results are still valid if the system is inhibitory or excitatory for different values of $r$.

We also mention the strongly inhibitory regime, which belongs to the case when $A^*<1$. In the system with absolute refractory period, convergence to the unique equilibrium was proved in \cite{torres2021elapsed} and then improved in \cite{perthame2025strongly}. In this regard, the stability criterion gives us more information on the exponential rate of convergence near the equilibrium through to the study of poles of Equation \eqref{poles-refact}.

\begin{proof}[Proof of Theorem \ref{stability-refrac}]
    \textbf{Proof of point 1.} Assume that $A^*<1$ and $d=0$. 
    In this case, Equation \eqref{poles-refact} is given by
    \begin{equation}
    \label{poles-d0}
    1=f(z)\coloneqq\frac{A^*z}{z+\varphi(r^*)-\varphi(r^*)e^{-\sigma     z}},
    \end{equation}
    and observe that $z=0$ is not a root of \eqref{poles-d0} since
    $$\lim_{z\to 0}f(z)=\frac{A^*}{1+\sigma\varphi(r^*)}<1.$$
    By considering $z=\alpha+i\omega$ with $z\neq 0$, Equation \eqref{poles-d0} is rewritten as the following system
    \begin{equation}
    \label{real-imag}
        \begin{cases}
        (A^*-1)\alpha=\varphi(r^*)(1-e^{-\sigma\alpha}\cos(\sigma\omega)),
        \\
        (A^*-1)\omega=\varphi(r^*)e^{-\sigma\alpha}\sin(\sigma\omega).
        \end{cases}
    \end{equation}
    We assert that is not possible that $\alpha\ge0$. Indeed, if $\alpha>0$ then from the first equation of the system \eqref{real-imag}
    $$0>(A^*-1)\alpha=\varphi(r^*)(1-e^{-\sigma\alpha}\cos(\sigma\omega))\ge 0,$$
    which is a contradiction. If $\alpha=0$ then we obtain that $\cos(\sigma\omega)=1$ and thus $\sin(\sigma\omega)=0$, implying that $w=0$. Therefore any solution $z$ of Equation \eqref{poles-d0} has negative real part and by Theorem \ref{asymptotic-volterra 2} the equilibrium $(n^*,r^*)$ is asymptotically stable.
    For $d>0$ and $|A^*|<1$, the results readily follow from the  points \textbf{2.} and \textbf{4.} of Theorem \ref{stability-criterion}, in particular we can take $\ell=1-\int_0^\infty|h_0(t)|\,dt$. Similarly when $A^*<-1$, the linear instability is obtained from the point \textbf{3.} of Theorem \ref{stability-criterion}.

    \textbf{Proof of points 2.} Consider the case $A^*\in (1,1+\sigma\varphi(r^*))$. If $d>0$ the linear instability readily follows from the point \textbf{3.} of Theorem \ref{stability-criterion}, since $A^*>1$. Assume now that $d=0$ and observe that at the origin we have $\displaystyle\lim_{x\to 0} f(x)<1$, while at infinity we get 
     $$\lim_{x\to+\infty}f(x)=A^*>1.$$
    From continuity of $f$ we obtain that there exists $\bar{x}>0$ such that $f(\bar{x})=1$ and hence it is a solution of Equation \eqref{poles-d0}. By Theorem \ref{stability-poles} we deduce that $(n^*,r^*)$ is linearly unstable for $d=0$.

    The case $A^*=1+\sigma\varphi(r^*)$ readily follows from point \textbf{3.} of Theorem \ref{stability-criterion}.

    \textbf{Proof of points 3.} We now prove the stability result for $A^*>1+\sigma\varphi(r^*)$. Like the previous case, if $d>0$ then $(n^*,r^*)$ is linearly unstable since $A^*>1$.
    
    Assume now $d=0$. As in the proof of the point \textbf{1.} we proceed by contradiction. Consider again $z=\alpha+i\omega$ with $z\neq 0$ and suppose that $\alpha\ge0$. Since the complex solutions of Equation \eqref{poles-d0} are pairs of conjugates, we can assume without loss of generality that $\omega\ge0$. From the system \eqref{real-imag} we deduce the following inequalities
    \begin{equation*}
        \begin{split}
            \sigma\alpha&\le 1-e^{-\sigma\alpha}\cos(\sigma\omega),\\
            \sigma\omega&\le e^{-\sigma\alpha}\sin(\sigma\omega).
        \end{split}
    \end{equation*}
    As $\alpha\ge0$, we deduce that $\sigma\omega\le\sin(\sigma\omega)$ and we obtain that $\omega=0$ since it was assumed to be non negative. From this assertion we conclude that $\sigma\alpha\le 1-e^{-\sigma\alpha}$ and thus $\alpha=0$, which is a contradiction. Hence, every root of Equation \eqref{poles-d0} has strictly negative real part and by Theorem \ref{asymptotic-volterra 2} the equilibrium $(n^*,r^*)$ is asymptotically stable.
    \end{proof}

\section{Asymptotic behavior for convolution Volterra equation with discrete delay}
\label{contour-thm}
This section is devoted to prove Theorem \ref{asymptotic-volterra 2} by adapting the proof of Theorem \ref{asymptotic-volterra} done in \cite{caceres2024asymptotic} for the Fokker-Planck equation. The main difference between these two results lies in handling the terms involving the discrete delay.

Before giving the proof of theorem, we start by reminding the inversion formula of the Laplace transform.
For a classical reference, see for example \cite{doetsch1977introduction}.
\begin{thm}[Inversion formula]
\label{inversion}
Let $f\colon[0,\infty)\to\R$ and $\alpha$ such that $f(t)e^{-\alpha t}\in L^1(\R^+)$. Let $t>0$ be a point where $f$ has bounded variation in some neighborhood of $t$. Then, the following formula
holds, for every $\beta\ge\alpha$,
    $$\lim_{R\to\infty}\frac{1}{2\pi i}\int_{\beta-iR}^{\beta+iR}e^{st}\widehat{f}(s)\,ds=\frac{1}{2}(f(t^+)+f(t^-)),$$
    where the integral is understood as a complex integral along the straight line with real part $\beta$.
    In particular, if $f$ is continuous at $t$, then the formula reduces to
    $$\lim_{R\to\infty}\frac{1}{2\pi i}\int_{\beta-iR}^{\beta+iR}e^{st}\widehat{f}(s)\,ds=f(t).$$
\end{thm}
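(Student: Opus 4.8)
The plan is to reduce the Bromwich integral to the classical Fourier inversion theorem for functions of bounded variation (the Dirichlet--Jordan criterion). First I would parametrize the vertical contour by $s=\beta+i\omega$, so that $ds=i\,d\omega$ and
\[
\frac{1}{2\pi i}\int_{\beta-iR}^{\beta+iR}e^{st}\widehat{f}(s)\,ds=\frac{e^{\beta t}}{2\pi}\int_{-R}^{R}e^{i\omega t}\widehat{f}(\beta+i\omega)\,d\omega.
\]
The key observation is that if we set $g(\tau)\coloneqq e^{-\beta\tau}f(\tau)\,\mathds{1}_{\{\tau\ge 0\}}$, then $g\in L^1(\R)$: for $\tau\ge 0$ we have $|g(\tau)|=|f(\tau)|e^{-\beta\tau}\le|f(\tau)|e^{-\alpha\tau}$ because $\beta\ge\alpha$, and the right-hand side is integrable by hypothesis. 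Moreover $\widehat{f}(\beta+i\omega)=\int_0^\infty e^{-i\omega\tau}g(\tau)\,d\tau=\widehat{g}(\omega)$ is exactly the Fourier transform of $g$, so the quantity above equals $e^{\beta t}$ times the symmetric partial Fourier inversion integral of $g$ evaluated at $t$. This is precisely the point at which the assumption $\beta\ge\alpha$ is used.

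The heart of the matter is then to show
\[
\lim_{R\to\infty}\frac{1}{2\pi}\int_{-R}^{R}e^{i\omega t}\widehat{g}(\omega)\,d\omega=\frac{1}{2}\bigl(g(t^+)+g(t^-)\bigr),
\]
which is the Dirichlet--Jordan theorem. To prove it I would insert the definition of $\widehat{g}$ and interchange the order of integration (justified by Fubini, since $g\in L^1$ and the inner $\omega$-integral runs over the compact set $[-R,R]$), arriving at the convolution of $g$ with the Dirichlet kernel,
\[
\frac{1}{2\pi}\int_{-R}^{R}e^{i\omega t}\widehat{g}(\omega)\,d\omega=\int_{-\infty}^{\infty}g(t-u)\,\frac{\sin(Ru)}{\pi u}\,du.
\]
Using the normalization $\int_0^\infty\frac{\sin(Ru)}{\pi u}\,du=\tfrac12$, it then suffices to control the difference between this integral and $\tfrac12(g(t^+)+g(t^-))$.

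I would split the $u$-integral into a local part $\{|u|<\delta\}$ and a tail $\{|u|\ge\delta\}$. On the tail, the map $u\mapsto g(t-u)/u$ lies in $L^1(\{|u|\ge\delta\})$ (as $g\in L^1$ and $1/u$ is bounded there), so its contribution vanishes as $R\to\infty$ by the Riemann--Lebesgue lemma. The local part is exactly where the hypothesis that $f$, and hence $g$, is of bounded variation near $t$ enters: writing $g(t-u)-g(t^-)$ on $(0,\delta)$ as a difference of monotone functions that vanish as $u\to 0^+$, I would apply the second mean value theorem (Bonnet's form) together with the uniform bound $\sup_{0<a<b,\,R>0}\bigl|\int_a^b\frac{\sin(Ru)}{u}\,du\bigr|<\infty$ to conclude that this remainder tends to $0$, leaving the contribution $g(t^-)\int_0^\delta\frac{\sin(Ru)}{\pi u}\,du\to\tfrac12 g(t^-)$; the half-line $(-\delta,0)$ is handled identically and yields $\tfrac12 g(t^+)$.

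Finally I would transfer the conclusion back to $f$: since $e^{-\beta\tau}$ is continuous, $g(t^\pm)=e^{-\beta t}f(t^\pm)$, so multiplying the Fourier inversion identity by $e^{\beta t}$ produces exactly $\tfrac12(f(t^+)+f(t^-))$, and the continuity case follows by setting $f(t^+)=f(t^-)=f(t)$. The main obstacle is the local analysis of the Dirichlet-kernel convolution: taming the oscillatory integral near the possible jump of $g$ requires the bounded-variation hypothesis precisely so that the second mean value theorem applies. It is this step, rather than the elementary reduction to Fourier inversion, that both constitutes the technical core and explains why the averaged value $\tfrac12(f(t^+)+f(t^-))$ — rather than a one-sided limit — is what emerges.
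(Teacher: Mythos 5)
Your proof is correct. There is nothing in the paper to compare it against: Theorem \ref{inversion} is stated without proof and backed only by a citation to the classical literature (\cite{doetsch1977introduction}), and your argument is precisely the standard Dirichlet--Jordan proof of the Bromwich inversion formula found there --- reduction to Fourier inversion of $g(\tau)=e^{-\beta\tau}f(\tau)\mathds{1}_{\{\tau\ge 0\}}$ (correctly identifying this as the only place where $\beta\ge\alpha$ enters), Fubini to produce the Dirichlet kernel, Riemann--Lebesgue for the tail $\{|u|\ge\delta\}$, and Bonnet's second mean value theorem combined with the uniform boundedness of the partial sine integrals for the local part, which is indeed the step where bounded variation is genuinely needed and where the averaged value $\tfrac12(f(t^+)+f(t^-))$ emerges. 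The only details worth making explicit in a full write-up are that $g$ inherits bounded variation near $t$ from $f$ (being the product of $f$ with the smooth factor $e^{-\beta\tau}$), and that in the local part the monotone pieces of the Jordan decomposition of $u\mapsto g(t\mp u)-g(t^\mp)$ must be normalized to vanish as $u\to 0^+$ before Bonnet's theorem delivers the uniform-in-$R$ smallness; both are routine.
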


We continue with the following two necessary lemmas on the function $F$ previously introduced in \eqref{f}
$$F(z)\coloneqq\frac{\widehat{g}(z)}{1-e^{-zd}(\widehat{h_0}(z)+A^*)},$$
where $g$ and $h_0$ are given in \eqref{coeff-volterra-g} and
\eqref{coeff-volterra-h}, respectively. The first lemma concerns the residue around a pole of $F$.

\begin{lem}[Residue close to a pole]
\label{residue}
 Consider 
 $g$ and $h_0$ under the assumptions of Theorem \ref{asymptotic-volterra 2}, with  $\alpha_0\in\R$, 
 such that $F(z)\coloneqq\frac{\widehat{g}(z)}{1-e^{-zd}(\widehat{h_0}(z)+A^*)}$ is a meromorphic function for $\Re(z)>\alpha_0$. 
 And let $z_0$ be a pole of order $m$ in that region ($\Re(z_0)>\alpha_0$).
 Then, there exists a non-trivial polynomial $p(t)$ of degree $m-1$, such that,
    $$\operatorname{Res}(e^{tz}F(z),z_0)=p(t)e^{z_0t}.$$
    Moreover, there exists a constant $C_h>0$, depending on the function $h$, such that, the coefficients of $p(t)$, denoted by
    $\{a_j\}_{j=0}^{m-1}$, are bounded as follows: 
    $$|a_j|\le C_h \|g(t)e^{-\alpha t}\|_\infty.$$
\end{lem}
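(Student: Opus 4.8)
The plan is to extract the residue of $e^{tz}F(z)$ at the pole $z_0$ by the standard Laurent-coefficient formula and to identify its structure explicitly. Near $z_0$, since $F$ has a pole of order $m$, I would write $F(z)=\sum_{j=1}^{m}\frac{c_{-j}}{(z-z_0)^j}+\text{(analytic part)}$, so that the residue of $e^{tz}F(z)$ is computed by expanding $e^{tz}=e^{z_0 t}\sum_{k\ge 0}\frac{t^k}{k!}(z-z_0)^k$ and collecting the coefficient of $(z-z_0)^{-1}$. This gives
\begin{equation}
\operatorname{Res}(e^{tz}F(z),z_0)=e^{z_0 t}\sum_{k=0}^{m-1}\frac{c_{-(k+1)}}{k!}\,t^{k}\eqqcolon p(t)e^{z_0 t},
\end{equation}
where $p(t)$ is a polynomial of degree at most $m-1$ whose coefficients are $a_k=c_{-(k+1)}/k!$. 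The polynomial is non-trivial precisely because $z_0$ is a genuine pole of order $m$, so $c_{-m}\neq 0$, giving $p$ exact degree $m-1$.

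Next I would obtain the Laurent coefficients $c_{-j}$ as contour integrals. Writing $\Phi(z)\coloneqq 1-e^{-zd}(\widehat{h_0}(z)+A^*)$ for the denominator of $F$, I have $F=\widehat{g}/\Phi$, and the coefficients are
\begin{equation}
c_{-j}=\frac{1}{2\pi i}\oint_{|z-z_0|=\rho}(z-z_0)^{j-1}\frac{\widehat{g}(z)}{\Phi(z)}\,dz
\end{equation}
for a small circle of radius $\rho$ around $z_0$ on which $F$ has no other singularity. The key observation for the bound is that $\widehat{g}$ enters only as a multiplicative numerator and is analytic in a neighborhood of $z_0$ (it is entire in the region $\Re(z)>\alpha_0$ by the hypotheses on $g$), whereas the location and order of the pole, and the factor $1/\Phi$, depend only on $h$ (through $\widehat{h_0}$ and $A^*$), not on $g$. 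The plan is therefore to factor out the $\widehat{g}$-dependence by bounding $|\widehat{g}(z)|$ uniformly on the contour. Since $g(t)e^{-\alpha t}\in L^\infty$, for $\Re(z)>\alpha$ one has the elementary estimate $|\widehat{g}(z)|\le \|g(t)e^{-\alpha t}\|_\infty/(\Re(z)-\alpha)$, which on the fixed small circle around $z_0$ gives $|\widehat{g}(z)|\le C_g\|g(t)e^{-\alpha t}\|_\infty$ with $C_g$ depending only on $\rho$ and the geometry near $z_0$. Combining with the bound on $1/\Phi$ over the circle yields
\begin{equation}
|c_{-j}|\le \rho^{j-1}\sup_{|z-z_0|=\rho}\left|\frac{1}{\Phi(z)}\right|\,\sup_{|z-z_0|=\rho}|\widehat{g}(z)|\le C_h\,\|g(t)e^{-\alpha t}\|_\infty,
\end{equation}
and hence $|a_k|=|c_{-(k+1)}|/k!\le C_h\|g(t)e^{-\alpha t}\|_\infty$ after absorbing the $1/k!$ factors, where $C_h$ depends only on $\Phi$ (that is, on $h$ and $A^*$) and on the chosen radius $\rho$.

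The main obstacle I anticipate is making the constant $C_h$ genuinely independent of $g$ and uniform in the relevant way: one must choose the radius $\rho$ and verify $\inf_{|z-z_0|=\rho}|\Phi(z)|>0$ in a manner that depends only on $h$ (the function defining $\Phi$) and not on the particular initial-data-dependent numerator $\widehat{g}$. This requires knowing that the poles of $F$ are exactly the zeros of $\Phi$ and that, around such a zero of order $m$, the factor $\widehat g(z_0)\neq 0$ so that $F$ truly has a pole of order $m$ rather than a lower one; this point must be handled carefully, since if $\widehat g$ vanishes at $z_0$ the order could drop. I would address this by treating $z_0$ as a pole of $F$ of whatever its actual order $m$ is (so $\widehat g(z_0)\neq 0$ when the stated order is attained), keeping the argument self-contained, and noting that the uniformity of $C_h$ follows because $\rho$ and the lower bound on $|\Phi|$ can be fixed once $z_0$ and its multiplicity are fixed, before any reference to $g$.
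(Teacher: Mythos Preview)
Your proposal is correct and follows essentially the same route as the paper: expand $F$ in its Laurent series at $z_0$, multiply by the Taylor series of $e^{tz}=e^{z_0 t}\sum_{k\ge 0}\frac{t^k}{k!}(z-z_0)^k$ to read off the residue as $e^{z_0 t}\sum_{j=0}^{m-1}\frac{c_{-j-1}}{j!}t^j$, and then bound each $c_{-j}$ by Cauchy's formula on a small circle around $z_0$, separating the factor $\widehat g$ (controlled by $\|g(t)e^{-\alpha t}\|_\infty$) from $1/\Phi$ (controlled by a constant depending only on $h$). The paper does exactly this, with the same choice of a circle contained in $\{\Re(z)\ge\beta\}$ for some $\beta>\alpha$ so that the elementary estimate $|\widehat g(z)|\le \|g(t)e^{-\alpha t}\|_\infty/(\beta-\alpha)$ applies; your extra remarks on the possibility that $\widehat g(z_0)=0$ and on fixing $\rho$ independently of $g$ are reasonable caveats that the paper does not spell out but which do not change the argument.
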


The proof is essentially the same as  Lemma 3.3 
in \cite{diekmann2012delay}, but we include it 
for the sake of completeness.

\begin{proof}[Proof of Lemma \ref{residue}]
    Let $m\in\N$ be the order of the pole $z_0$. We recall that $\operatorname{Res}(e^{zt}F(z),z_0)$ corresponds to the coefficient of the power $-1$ in the Laurent expansion around $z_0$. Then, we consider the Laurent expansions of $F(z)$ and $e^{zt}$ around the pole $z=z_0$: 
    $$F(z)=\sum_{k=-m}^\infty c_k(z-z_0)^k,\quad e^{zt}=e^{z_0t}e^{(z-z_0)t}=\sum_{k=0}^\infty \frac{t^k}{k!}(z-z_0)^k,$$
    with $c_{-m}\neq0$,
    and by performing the multiplication of these two series, we obtain the desired coefficient: 
    $$p(t)\coloneqq \sum_{j=0}^{m-1}c_{-j-1}\frac{t^j}{j!}.$$
    Observe that $p(t)$ is polynomial in $t$ of degree $m-1$ since $c_{-m}\neq 0$. 
    
    In order to estimate the coefficients of $p(t)$, we just need find a bound the coefficients $c_k$ with $k=-m,\dots,-1$. Consider a circle $\gamma$ of radius $r$ around the pole $z_0$ such that it does not enclose any other pole of $F$ and for some $\beta>\alpha$ we verify that $\gamma\subset\{z\in\mathds{C}\colon \Re(z)\ge\beta\}$. From Cauchy's formula we deduce that
    \begin{equation*}
        \begin{split}
            |c_k|=\frac{1}{2\pi}\left|\oint_\gamma \frac{F(w)}{(w-z_0)^{k+1}}dw \right|&\le \frac{1}{2\pi r^{k+1}}\oint_\gamma |F(w)|\,dw\\ 
            &\le \frac{\tilde{C}_h}{2\pi r^{k+1}}\oint_\gamma |\widehat{g}(w)|\,dw\\
            &\le \frac{\tilde{C}_h}{r^k}\sup_{\Re(w)\ge\alpha_0}|\widehat{g}(w)|\,dw\\
            &\le \frac{\tilde{C}_h}{r^k(\beta-\alpha)}\|g(t)e^{-\alpha t}\|_\infty,
        \end{split}
    \end{equation*}
    where $\tilde{C}_h\coloneqq\inf_{z\in\gamma}|1-e^{-zd}(\widehat{h_0}(z)+A^*)|$ depends on $h$ and the choice of $r$ and $\beta$. Therefore, the estimate for the coefficients of $p(t)$ readily follows.
\end{proof}

Finally, we need this technical but important result on the integral of $F$ defined in \eqref{f} over a certain line.

\begin{lem}
\label{contour-left}
    Assume the same hypothesis of Theorem \ref{asymptotic-volterra 2} for the functions $g,h$ and the constant $\alpha_0$. Then for $\alpha>\max\{\alpha_0,\frac{\ln |A|^*}{d}\}$, such that $F$ has no poles on the line $\Re(z)=\alpha$, there exists $C>0$ (depending on norms of $g,h$ and independent of $R>0$) such that the following estimate holds
    $$\left|\int_{-R}^R e^{i\omega t}F(\alpha+i\omega)\,d\omega\right|\le C\quad\forall t>0.$$
\end{lem}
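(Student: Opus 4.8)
The plan is to bound the integral $\int_{-R}^R e^{i\omega t} F(\alpha+i\omega)\,d\omega$ uniformly in $R$ and $t$ by controlling the behavior of $F(\alpha+i\omega)$ as a function of $\omega$. The key difficulty compared to the no-delay case of Theorem \ref{asymptotic-volterra} is the factor $e^{-zd}$ appearing in the denominator $1-e^{-zd}(\widehat{h_0}(z)+A^*)$: on the vertical line $\Re(z)=\alpha$ this factor has modulus $e^{-\alpha d}$, and the constant term $A^*$ does not decay as $|\omega|\to\infty$, so the denominator does \emph{not} tend to $1$ but rather oscillates. We must therefore show the denominator stays bounded away from zero along the line, which is exactly where the hypothesis $\alpha>\frac{\ln|A^*|}{d}$ enters: it guarantees $|A^*|e^{-\alpha d}<1$.

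First I would establish a uniform lower bound on $|1-e^{-zd}(\widehat{h_0}(z)+A^*)|$ for $z=\alpha+i\omega$. Writing this as $|1 - e^{-\alpha d}e^{-i\omega d}(\widehat{h_0}(\alpha+i\omega)+A^*)|$, I use that $\widehat{h_0}(\alpha+i\omega)\to 0$ as $|\omega|\to\infty$ by the Riemann-Lebesgue lemma (valid since $h_0(t)e^{-\alpha t}\in L^1$ by Lemma \ref{laplace-h}, using $\alpha>-\mu$). Hence for $|\omega|$ large the modulus of the subtracted term is at most $e^{-\alpha d}(|A^*|+o(1))<1$, giving a lower bound bounded away from zero for large $|\omega|$; on the remaining compact $\omega$-interval the denominator is continuous, nonvanishing (no poles on the line by assumption), and hence bounded below. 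This yields a constant $c>0$ with $|1-e^{-zd}(\widehat{h_0}(z)+A^*)|\ge c$ for all $\omega\in\R$.

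Next I would control the numerator $\widehat{g}(\alpha+i\omega)$ and assemble the bound. Since $g(t)e^{-\alpha t}\in L^\infty$ is not by itself enough for integrability in $\omega$, the standard trick (as in the proof of Theorem \ref{asymptotic-volterra} in \cite{caceres2024asymptotic}) is to exploit the Volterra structure to gain a factor of $1/z$: one integrates by parts in the definition of $\widehat{g}$, using that $g$ is of bounded variation on compacts (Lemma \ref{laplace-g}\eqref{3-g}), writing $\widehat{g}(z)=\frac{g(0)}{z}+\frac{1}{z}\int_0^\infty e^{-zt}\,dg(t)$, so that $\widehat{g}(\alpha+i\omega)=O(1/|\omega|)$ together with an $L^2$ contribution from $\widehat{h_0}$ of the integrable factor. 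To get the genuinely uniform-in-$t$ bound on $\int_{-R}^R e^{i\omega t}F(\alpha+i\omega)\,d\omega$, I would follow \cite{caceres2024asymptotic}: split $F$ on the line into a principal part plus a remainder in $L^1(\R,d\omega)$ (whose integral is trivially bounded by its $L^1$ norm, uniformly in $t$ and $R$), and handle the slowly-decaying principal part by recognizing it as the Laplace/Fourier representation of an explicit bounded function, using the inversion formula of Theorem \ref{inversion}. The oscillatory factor $e^{-i\omega d}$ merely shifts the time argument and does not affect boundedness.

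The main obstacle is the non-decay of the denominator induced by the discrete delay: unlike the pure convolution case, $1-e^{-zd}(\widehat{h_0}+A^*)$ does not converge to $1$ along vertical lines, so one cannot simply estimate $F\approx\widehat{g}$ for large $|\omega|$ and must instead extract the persistent $A^*e^{-zd}$ term carefully. Concretely, I expect the delicate step to be separating $F(\alpha+i\omega)$ into an $L^1(d\omega)$ remainder and a finite sum of terms of the form $\frac{(\text{bounded})\,e^{-ik\omega d}}{i\omega}$ (coming from geometric expansion of $(1-e^{-zd}A^*)^{-1}$ in powers of $A^*e^{-zd}$, legitimate since $|A^*|e^{-\alpha d}<1$), each of which contributes a bounded time-shifted primitive via Theorem \ref{inversion}, so that their total integral remains uniformly bounded in both $R$ and $t$.
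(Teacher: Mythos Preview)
Your strategic outline is close to the paper's, but the concrete mechanism you propose for the $L^1(d\omega)$ remainder does not work, and you expand the wrong factor.

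Integrating by parts in $\widehat{g}$ yields only $\widehat{g}(\alpha+i\omega)=O(1/|\omega|)$, and $1/|\omega|$ is not integrable; combining this with a lower bound on the denominator or with a geometric expansion of $(1-A^*e^{-zd})^{-1}$ still leaves terms that decay no faster than $1/|\omega|$, so you never reach an $L^1$ remainder this way. The phrase ``together with an $L^2$ contribution from $\widehat{h_0}$'' is the crux, but $\widehat{h_0}$ sits in the denominator of $F$ and does not appear as a helpful factor unless you put it there algebraically. This is precisely what the paper does: it expands not in powers of $A^*e^{-zd}$ but in powers of $\dfrac{\widehat{h_0}(z)e^{-zd}}{1-A^*e^{-zd}}$, obtaining
\[
F(z)=\underbrace{\frac{\widehat{g}(z)\,\widehat{h_0}(z)^{N}e^{-Nzd}}{\bigl(1-e^{-zd}(\widehat{h_0}(z)+A^*)\bigr)\bigl(1-A^*e^{-zd}\bigr)^{N}}}_{F_1}
+\underbrace{\sum_{k=0}^{N-1}\frac{\widehat{g}(z)\,\widehat{h_0}(z)^{k}e^{-kzd}}{\bigl(1-A^*e^{-zd}\bigr)^{k+1}}}_{F_2}.
\]
Now $F_1$ carries $\widehat{h_0}^{N}$ in the \emph{numerator}, and the hypothesis $h(t)e^{-\alpha_0 t}\in L^1\cap L^p$ for some $p>1$ (which you never use) is exactly what makes Hausdorff--Young applicable: for $N$ large enough, $\|\mathcal{F}[e^{-\alpha t}h]\|_{2N}^{N}<\infty$, and together with $\|e^{-\alpha t}g\|_2<\infty$ one gets $F_1(\alpha+i\cdot)\in L^1(\R)$.

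For the principal part your idea is morally right but the form ``$\frac{(\text{bounded})e^{-ik\omega d}}{i\omega}$'' is not what emerges. The paper instead observes that each summand of $F_2$ is the genuine Laplace transform of an explicit function: writing $u_f$ for the solution of $u_f(t)=f(t)+A^*u_f(t-d)\mathds{1}_{\{t>d\}}$, one has $F_2=\widehat{f_2}$ with $f_2=u_g+u_g*\sum_{k=1}^{N-1}u_h^{*k}$. Since $|A^*|e^{-\alpha d}<1$, the functions $u_g e^{-\alpha t}$ and $u_h e^{-\alpha t}$ decay exponentially, so $f_2 e^{-\alpha t}\in L^1$ and the inversion formula (Theorem~\ref{inversion}) applies directly on the line $\Re(z)=\alpha$, giving the uniform bound on $\int_{-R}^{R}e^{i\omega t}F_2(\alpha+i\omega)\,d\omega$ without any $1/i\omega$ bookkeeping.
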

    The main difficulty of this result is that we cannot a proiri assert that $F$ is integrable along the line $\Re(z)=\alpha$ to apply Riemann-Lebesgue lemma. In the same way, we cannot directly assert that $F$ is Laplace transform of a function when $\Re(z)=\alpha$. We can only apply the inversion formula when $\Re(z)$ is large enough since we can verify the hypothesis to apply Theorem \ref{inversion}.

\begin{proof}[Proof of Lemma \ref{contour-left}]
 We rewrite the function $F$ in the following way
 $$F(z)=\frac{\widehat{g}(z)\,\widehat{h_0}^N(z) e^{-Nzd}}{(1-e^{-zd}(\widehat{h_0}(z)+A^*))(1-A^*e^{-zd})^N}+\sum_{k=0}^{N-1}\frac{\widehat{g}(z)\widehat{h_0}^k(z)e^{-kzd}}{(1-A^*e^{-zd})^{k+1}}\eqqcolon F_1(z)+F_2(z),$$
 with $N\in\N$, so that we decompose the desired integral as follows
 \begin{equation}
 \label{F1+F2}
 \int_{-R}^R e^{i\omega t}F(\alpha+i\omega)\,d\omega=\int_{-R}^R e^{i\omega t}F_1(\alpha+i\omega)\,d\omega+\int_{-R}^R e^{i\omega t}F_2(\alpha+i\omega)\,d\omega.    
 \end{equation}
 We start by analyzing the first integral. Since $\alpha>\max\{\alpha_0,\frac{\ln |A|^*}{d}\}$ and $F$ has no poles on the line $\Re(z)=\alpha$, there exists $C_1>0$ such that
 $$\frac{1}{(1-e^{-(\alpha+i\omega)d}(\widehat{h_0}(\alpha+i\omega)+A^*))(1-A^*e^{-(\alpha+i\omega)d})^N}\le C_1\quad\forall\omega\in\R,$$
 so that the integral of $F_1$ is estimated for all $t>0$ as follows
 \begin{equation*}
     \begin{split}
       \left|\int_{-R}^R e^{i\omega t}F_1(\alpha+i\omega)\,d\omega\right|&\le \int_{-\infty}^{+\infty} |F_1(\alpha+i\omega)|\,d\omega\\
       &\le C_1 \int_{-\infty}^{+\infty} |\widehat{g}(\alpha+i\omega)|\,|\widehat{h}(\alpha+i\omega)|^N\,d\omega\\
       &\le 2\pi C_1 \int_{-\infty}^{+\infty} |\mathcal{F}[e^{-\alpha t}g](\omega)|\,|\mathcal{F}[e^{-\alpha t}h](\omega)|^N\,d\omega\\
       &\le 2\pi C_1 \|\mathcal{F}[e^{-\alpha t}g]\|_2 \|\mathcal{F}[e^{-\alpha t}h]\|_{2N}^N\\
       &\le C_1 \|e^{-\alpha t}g\|_2 \|e^{-\alpha t}h\|_{p_0}^N,
     \end{split}
 \end{equation*}
 where we have set $h(t)=h_0(t-d)\mathds{1}_{\{t>d\}}$ and applied Hausdorff-Young inequality with $p_0\in[1,2]$ such that $\frac{1}{p_0}+\frac{1}{2N}=1$. From the hypothesis on $h$ we get that $h(t)e^{-\alpha_0 t}\in L^q(\R^+)$ for all $q\in[1,p]$, so by choosing $N$ large enough we get $p_0\in [1,p]$ and hence $\|e^{-\alpha t}h\|_{p_0}<\infty$, since $\alpha>\alpha_0$. Similarly for $g$ we get that $\|e^{-\alpha t}g\|_2<\infty$. 

On the other hand, observe that $F_2$ is the Laplace transform of $f_2(t)\coloneqq u_g(t)+u_g(t)*\sum_{k=1}^{N-1}u_h(t)^{*k}$ where $u_f$ is the solution of the recursion
$$u_f(t)=f(t)+A^*u_f(t-d)\mathds{1}_{\{t>d\}},\quad t>0.$$
By solving step-by-step this recursion, we observe that $u_g,u_f$ are of bounded variation in compact sets. We assert that $v_g\coloneqq u_g(t)e^{-\alpha t}\in L^1(\R^+)$, indeed $v_g$ is solution of the recursion
$$v_g(t)=g(t)e^{-\alpha t}+A^*e^{-\alpha d}v_g(t-d)\mathds{1}_{\{t>d\}},\quad t>0.$$
From the hypothesis on $g$ we know that $|g(t)e^{-\alpha t}|\le B e^{-(\alpha-\alpha_0)t}$ with $B=\|ge^{-\alpha_0 t}\|_\infty$. Since $|A^*|e^{-\alpha d}<1$ we deduce that $v_g(t)\to 0$ exponentially and thus $v_g\in L^1(\R^+)$ and similarly the same conclusion hols for $v_h$. Hence $f_2$ is of bounded variation in compact sets and $f_2(t)e^{-\alpha t}\in L^1(\R^+)$ and $f_2(t)e^{-\alpha t}\to 0$ exponentially when $t\to\infty$.

By applying Theorem \ref{inversion}, we get the following formula
$$\lim_{R\to\infty}\frac{1}{2\pi i}\int_{\alpha-iR}^{\alpha+iR}e^{st}F_2(s)\,ds=\frac{1}{2}(f_2(t^+)+f_2(t^-)),$$
thus we have
$$\left|\lim_{R\to\infty}\int_{-R}^{R} |F_2(\alpha+i\omega)|\,d\omega\right|\le \pi e^{-\alpha t}(|f_2(t^+)|+|f_2(t^-)|).$$
Observe that right-hand side converges to $0$ when $t\to\infty$ and  therefore there exists $C_2>0$ independent of $R>0$ such that
$$\left|\int_{-R}^R e^{i\omega t}F(\alpha+i\omega)\,d\omega\right|\le C_2,\quad\forall t>0,$$
and $C_2$ depends only on norms of $ge^{-\alpha_0 t}$ and $he^{-\alpha_0 t}$. Finally by adding up the estimates for $F_1$ and $F_2$, the desired result readily follows.
\end{proof}

With these results we proceed with the proof of Theorem \ref{asymptotic-volterra 2}.

\begin{proof}[Proof of Theorem \ref{asymptotic-volterra 2}.]
  From the hypothesis for $\alpha_0\in\R$ and the functions $g,h$, we observe that $F$ is meromorphic for $\Re(z)>\alpha_0$. For $\beta$ large enough we may apply Laplace transform to Equation \eqref{volterra-delay} get
  $$\widehat{u}(z)=\frac{\widehat{g}(z)}{1-e^{-zd}(\widehat{h_0}(z)+A^*)}\quad\textrm{for}\:\Re(z)\ge\beta.$$
  Observe that $\widehat{u}(z)=F(z)$ for $\Re(z)\ge\beta$, but a priori we cannot assert that $\widehat{u}$ is defined for $\Re(z)>\alpha_0$.

  For $R>0$ consider the positively oriented curve
  $$\Gamma=[\beta-IR,\beta+iR]\cup[\beta+iR,\alpha+iR]\cup[\alpha+iR, \alpha-iR]\cup[\alpha-iR,\beta+iR].$$
  We notice that $F$ is analytic in the domain enclosed by $\Gamma$, since there is no singularities, and by applying Cauchy's theorem we get for $t>0$
\begin{equation}
  \label{integrals}
  \begin{split}
   2\pi i\sum_{k=1}\operatorname{Res}(F(z)e^{zt},z_k)=\oint_{\Gamma}e^{zt}F(z)\,dz
   =\int_{\beta-iR}^{\beta+iR}e^{zt}F(z)\,dz-e^{iRt}\int_{\alpha}^{\beta}e^{st}F(s+iR)\,ds\\
-\int_{\alpha-iR}^{\alpha+iR}e^{zt}F(z)\,dz+e^{-iRt}\int_{\alpha}^{\beta}e^{st}F(s-iR)\,ds\eqqcolon I_1+I_2+I_3+I_4, 
  \end{split}   
\end{equation}
where the sum is taken over all the poles of $F$ (which are finite for $\Re(z)\ge\alpha$) inside the curve $\Gamma$.
In order to obtain the result, we must analyze each integral $I_i$ for $i\in\{1,2,3,4\}$ in terms of $R$.

For $I_1$, observe from Equation \eqref{volterra-delay} that $u(t)\in L ^\infty$ and it is of bounded variation in compact sets since $g,h$ are. Moreover for $\beta>0$ large enough we get $u(t)e^{-\beta t}\in L^1(\R^+)$ and by Theorem \ref{inversion} we deduce that
$$\lim_{R\to\infty} I_1(R)=u(t).$$

We now continue with $I_2$. From Riemann-Lebesgue we get the following limit
 $$\lim_{R\to\infty}\widehat{g}(s+iR)=\lim_{R\to\infty}\sqrt{2\pi}\mathcal{F}_t[e^{-st}g(t)](R)=0,\:\textrm{uniformly in}\:s\in[\alpha,\beta].$$
 Since the poles of $F$ are finite for $\Re(z)\ge\alpha$, we have that $1-e^{-zd}(\widehat{h_0}(z)+A^*)$ has no zeros in the line $[\beta+iR,\alpha+iR]$ for $R$ large enough and hence 
 $$\lim_{R\to\infty}F(s+iR)= 0,\:\textrm{uniformly in}\:s\in[\alpha,\beta],$$
 which implies that $I_2(R)\to0$ when $R\to\infty$ and the same assertion holds for $I_4(R)$ analogously.

For the integral $I_3$, from Lemma \ref{contour-left} there exists $C>0$ (depending on norms of $g,h$ and independent of $R>0$) such that
 $$|I_3(R)|\le C e^{\alpha t},\quad\forall t>0.$$
Concerning the term involving the residues, from Lemma \ref{residue} we know that for every pole $\lambda_k$ of $F$ with order $m_k$ and
$\Re(\lambda_k)>\alpha$,
the following formula holds
$$\operatorname{Res}(e^{tz}F(z),\lambda_k)=p_k(t)e^{\lambda_k t},$$
where $p_k(t)$ is a polynomial of degree $m_k-1$ with coefficients $\{a_j^k\}_{j=0}^{m-1}$ and we have the following estimate
$$|a_j^k|\le C_h \|g(t)e^{-\alpha t}\|_\infty,$$
with $C_h>0$ a constant depending on $h$. By taking the limit $R\to\infty$ in Equation \eqref{integrals}, the proof of Theorem \ref{asymptotic-volterra 2} readily follows.
\end{proof}

\begin{rmk}
Observe that in the asymptotic expansion given for $u(t)$ in \eqref{asymptotic-expan} of Theorem \ref{asymptotic-volterra 2} we are ignoring the poles with $\Re(z)<\frac{\ln|A^*|}{d}$. If we were able to replicate the proof including these poles in the contour integral of \eqref{integrals}, then we would get a more precise convergence for $u(t)$. However, if we only want to prove that $u(t)$ converges exponentially to $0$, it is sufficient to check that there are no poles with $\Re(z)>0$ and we do not need to include the poles with $\Re(z)<\frac{\ln|A^*|}{d}$ in the contour integral of \eqref{integrals}.   
\end{rmk}

\section{Numerical simulations}
\label{numerical}
In this section, we simulate some numerical examples to validate the stability criterion stated in Theorems  \ref{stability-criterion} and \ref{stability-refrac},
 when connectivity and delay vary, and explore specific cases not addressed by them, complementing the theoretical analysis.

We simulate systems with an absolute refractory period, which means that $S$ takes the form given in \eqref{S-refrac}:
$$
S_b(a,r)=\varphi_b(r)\mathds{1}_{\{a>\sigma\}}.
$$    
We recall that the constant $\sigma>0$ denotes the refractory period.
We will include in $\varphi_b$ a parameter $b\in\R$ which varies and represents the system's connectivity, so that sign of $\varphi_b'$ will be determined by the sign of $b$ and for a given steady state $(n^*,r^*)$ the value of $A^*$ varies in terms of $b$.

\subsection{Bifurcation diagrams}

We examine two examples with different number of steady states and analyze their stability.

\textbf{Example 1: Several equilibria.} For our first example we choose the firing coefficient given by 
\begin{equation}
    \label{S_b1}
    S_b(a,r)= \frac{1}{1+e^{-9br+3.5}}\mathds{1}_{\{a>0.5\}}.
\end{equation}
We observe that $b\in\R$ can be understood as a parameter that represents connectivity, since the system \eqref{eqdelay} is inhibitory when $b<0$ and excitatory when $b>0$.  
In this example the refractory period is $\sigma=0.5$
and $\varphi_b(r)=\frac{1}{1+e^{-9br+3.5}}$.

In this example, the number of stationary solutions changes as the value of $b$ varies. 
For all $b\in\R$ with  $b<b_1\approx0.9313$ and for $b>b_4\approx1.5314$ 
there exists a unique steady, while for $b\in(b_1,b_4)$ the system has three different steady states, see Figure \ref{Bif1}.
In that figure  we show the bifurcation diagram of the activity of equilibrium $r^*$ in terms of the parameter $b$ in the case without delay, \textit{i.e.} $d=0$.
We observe that the behaviour changes within the interval $(b_1,b_4)$, which is why we use this notation, as we need to
consider both $b_2$ and $b_3$ within this range.
\begin{figure}[ht!]
    \centering
    \begin{subfigure}{0.49\textwidth}
    \centering
    \includegraphics[width=\textwidth]{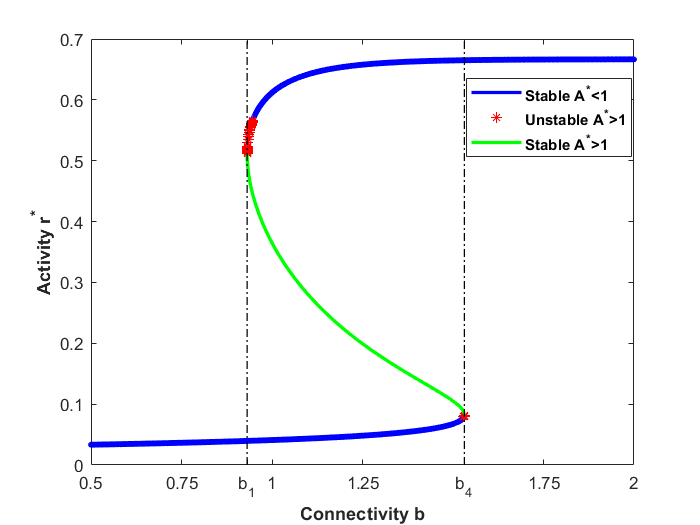}
    \caption{
    Bifurcation diagram.}
    \label{Bif1}
    \end{subfigure}\\
    \begin{subfigure}{0.49\textwidth}
    \centering
    \includegraphics[width=\textwidth]{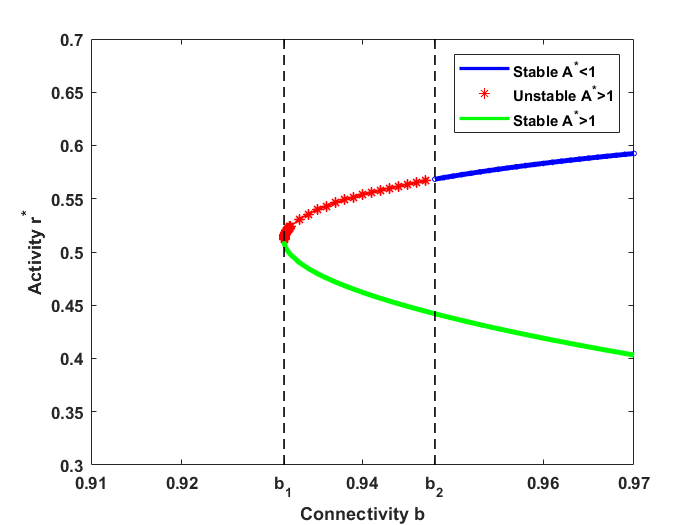}
    \caption{Diagram around the first bifurcation.}
     \label{Bif1b}
    \end{subfigure}
    \begin{subfigure}{0.49\textwidth}
     \includegraphics[width=\textwidth]{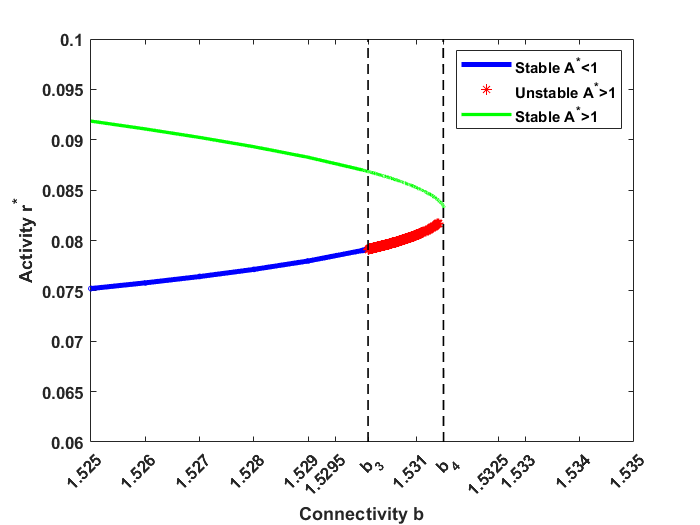}
     \caption{Diagram around the second bifurcation.}
    \label{Bif1c}
    \end{subfigure}
    \caption{Bifurcation diagram for the activity $r^*$  of a stationary solution of \eqref{eqdelay} with $S$ given by \eqref{S_b1}
    in terms of the parameter $b$ with $d=0$.}
\end{figure}
We notice that for all $b\in  (-\infty, b_1) \cup (b_4,\infty)$, $A^*<1$
and then, according to Theorem \ref{stability-refrac}, the unique steady is linearly asymptotically stable.
However, for $b\in(b_1,b_4)$, the system has three different steady states, and their stability vary in terms of $b$. In Figure \ref{Bif1b} we zoom in on the first bifurcation to observe that around the critical value $b=b_1$ two branches of equilibrium arise with $A^*>1$, besides the one with $A^*<1$. For $b\in(b_1,b_2)$ with $b_2\approx0.9480$, one of these steady states with $A^*>1$ is stable, because $A^*>1+\sigma\varphi_b(r^*)$, and the other one is unstable, since $1<A^* < 1+\sigma\varphi_b(r^*)$, according to Theorem \ref{stability-refrac}. In particular at $b=b_2$ we have $A^*=1$ and the unstable branch becomes stable with $A^*<1$ for $b>b_2$. 

Similarly in Figure \ref{Bif1c} we zoom in on the second bifurcation around the critical value $b=b_4$, where we have two branches of equilibrium with $A^*>1$ where one of them is stable, because
 $A^* >1+\sigma\varphi_b(r^*)$, and the other one is unstable for $b\in(b_3,b_4)$ with $b_3\approx1.5301$, since $1<A^* < 1+\sigma\varphi_b(r^*)$. In particular at $b=b_3$ we have $A^*=1$ and the unstable branch becomes stable with $A^*<1$ for $b<b_3$.

This example shows that the stability of a steady state might change without changing the number of equilibriums, which indicates that the dynamics is complex to understand in general. We conjecture that this is due to the existence of periodic solutions induced by the delay $d$ as observed in \cite{sepulveda2023well}. Finally, we remark that when we consider a delay $d>0$, all the branches with $|A^*|>1$ become unstable as a consequence of Theorem \ref{stability-criterion}. This includes the strongly inhibitory case when $A^*<-1$, which is attained when $b$ is negative enough.

\textbf{Example 2: Unique equilibrium.} We now consider a second example where the firing coefficient is given by 
\begin{equation}
    \label{S_b2}
    S_{\bar{b}}(a,r)=\left(\frac{10\bar{b}r^2}{\bar{b}r^2+1}+0.5\right)\mathds{1}_{\{a>1\}}, \, \bar{b}=b^2
\end{equation}
In this case the refractory period is $\sigma=1$, $\varphi_{\bar{b}}(r)=\frac{10\bar{b}r^2}{\bar{b}r^2+1}+0.5$, 
and we consider $\bar{b}=b^2$  as connectivity parameter,
 which corresponds to an excitatory system. 

\begin{figure}[ht!]
    \centering
    \includegraphics[width=0.5\linewidth]{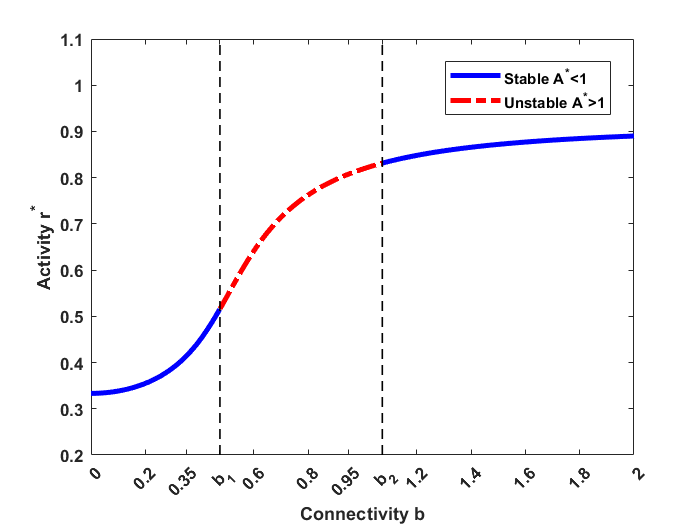}
    \caption{Bifurcation diagram for the activity $r^*$  of a stationary solution of \eqref{eqdelay} with $S$ given by \eqref{S_b2}
    in terms of the parameter $b$ with $d=0$.}
    \label{Bif2}
\end{figure}

In Figure \ref{Bif2} we show the bifurcation diagram of the activity of equilibrium $r^*$ in terms of the parameter $b\ge0$ in the case without delay. In this particular example we have a unique steady state for all $b\in\R$. 

We notice that for $0\le b<b_1\approx0.4750$ and for $b>b_2\approx1.0730$ the steady state is stable with $A^*<1$. For $b\in(b_1,b_2)$ the steady state is unstable with $A^*>1$ and this remains true for all $d>0$, because $1<A^*<1+\sigma\varphi_{\bar{b}}(r^*)$.

As in the previous example, we show another example in which the stability of an equilibrium might change without changing the number of steady states. 

\subsection{Stability for systems with $|A^*|<1$} 

Point 2 of the stability criterion given in Theorem \ref{stability-criterion} states that the equilibrium $(n^*,r^*)$ of the nonlinear system \eqref{eqdelay} is linearly stable whenever $|A^*|<1-\int_0^\infty|h_0(t)|\,dt$ for all $d\ge0$, while point 3 asserts that the equilibrium is linearly unstable for all $d>0$, when $|A^*|>1$. However, the theorem does not give us information when $A^*$ satisfies
\begin{equation}
\label{A-star<1}
 1-\int_0^\infty|h_0(t)|\,dt<|A^*|<1.
\end{equation}
From Theorem \ref{stability-refrac} we know that the steady state is linearly stable for all $d\ge0$ small enough, but, in general, we do not have any information when $d$ is large. 
We investigate this case through two numerical examples, showing that the equilibrium can be either stable or unstable depending on the delay $d$.

If we consider again the previous example with firing coefficient $S_{\bar{b}}$ in \eqref{S_b2} with $b=0.43$  and $\bar{b}=b^2$, we have that
$$r^*\approx 0.4729,\:A^*\approx 0.8500\ \textrm{and}\ \frac{d}{dr}\left.\left(\frac{1}{I(r)}\right)\right|_{r=r^*}\approx0.4481\:,$$
and by applying the equality \eqref{h0hat0} of Lemma \ref{laplace-h}, we get the following inequalities
\begin{equation}
    \begin{split}
     |A^*|+\int_0^\infty|h_0(t)|\,dt\ge&|A^*|+\left|\int_0^\infty h_0(t)\,dt\right|\\
      =&|A^*|+|\widehat{h_0}(0)|\\
     =&|A^*|+\left|A^*-\tfrac{d}{dr}\left.\left(\tfrac{1}{I(r)}\right)\right|_{r=r^*}\right|\\
     >&1.
    \end{split}
\end{equation}
Hence \eqref{A-star<1} holds and point 2 of the stability criterion given in Theorem \ref{stability-criterion} is not fulfilled. 

For the firing coefficient $S_{\bar{b}}$, we show in Figures \ref{Dominant-1} and \ref{Dominant-2} the real part of the dominant eigenvalue $z_0$, which corresponds to the solution of Equation \eqref{poles-refact} with the greatest real part, in terms of the delay $d$.

\begin{figure}[ht!]
    \centering
    \begin{subfigure}{0.49\textwidth}
    \centering
    \includegraphics[width=\textwidth]{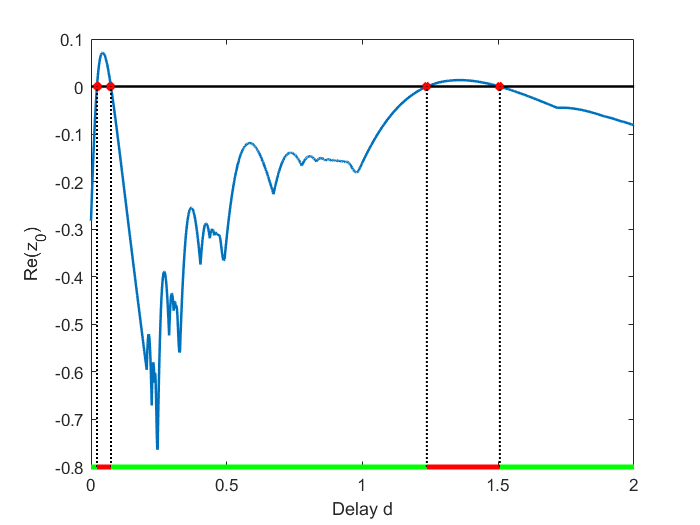}
    \caption{$\Re(z_0)$ for $d\in[0,2]$.}
    \label{Dominant-1}
    \end{subfigure}
    \begin{subfigure}{0.49\textwidth}
    \centering
    \includegraphics[width=\textwidth]{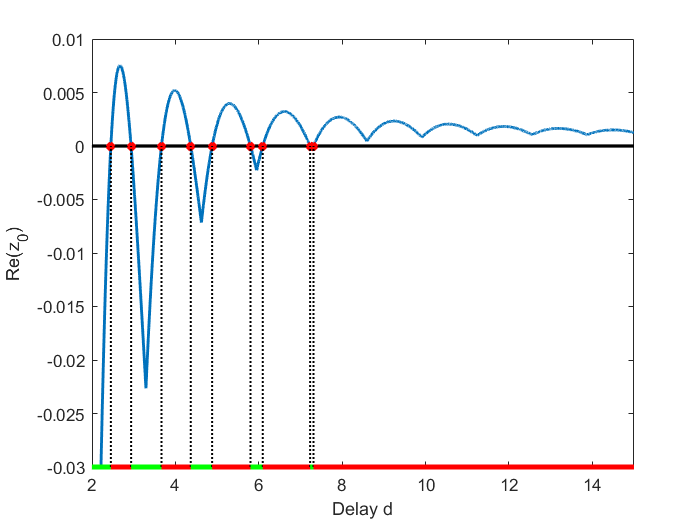}
    \caption{$\Re(z_0)$ for $d\in[2,15]$.}
    \label{Dominant-2}
    \end{subfigure}
    \caption{ 
     Real part of the dominant eigenvalue $z_0$ in terms of the delay $d$ for the firing coefficient $S_{0.43^2}$ given in \eqref{S_b2}. In the axis for $d$, green indicates the values of delay where the equilibrium is linearly stable, while red means instability. The red points indicate the critical values for $d$ where $\Re(z_0)=0$.}
\end{figure}

According to Theorem \ref{stability-poles}, when $\Re(z_0)<0$ the steady state $(n^*,r^*)$ is linearly stable and when $\Re(z_0)>0$ is linearly unstable. We observe in Figures \ref{Dominant-1} and \ref{Dominant-2} that stability changes multiple times when $d$ varies and we observe the existence of some values of $d$ where $\Re(z_0)=0$, where a Hopf bifurcation might arise. The approximate values of these delays and the corresponding eigenvalues are given in Table \ref{tab:hopf-bif}.

\begin{table}[ht!]
    \centering
    \begin{tabular}{|c|c|c|}\hline
        Delay & Eigenvalue & Stability change \\ \hline
        $0.0220$ &  $\pm 5.1718\:i$ & Stable to unstable\\ \hline 
        $0.0735$ &  $\pm 4.3888\:i$ & Unstable to stable\\ \hline
        $1.2369$ &  $\pm 5.1718\:i$ & Stable to unstable \\ \hline
        $1.5051$ &  $\pm 4.3888\:i$ & Unstable to stable \\ \hline
        $2.4518$ &  $\pm 5.1718\:i$ & Stable to unstable\\ \hline
        $2.9368$ &  $\pm 4.3888\: i$ & Unstable to stable\\ \hline
        $3.6667$ &  $\pm 5.1718\: i$ & Stable to unstable\\ \hline
        $4.3684$ & $\pm 4.3888\: i$ & Unstable to stable \\ \hline
        $4.8816$ &  $\pm 5.1718\: i$ & Stable to unstable\\ \hline
        $5.8001$ &  $\pm 4.3888\:i $ & Unstable to stable\\ \hline
        $6.0965$ &  $\pm 5.1718\:i$ & Stable to unstable\\ \hline
        $7.2318$ &  $\pm 4.3888\: i$ & Unstable to stable \\ \hline
        $7.3114$ &  $\pm 5.1718\:i $ & Stable to unstable\\ \hline
    \end{tabular}
    \caption{Approximate critical delays for the firing coefficient $S_{0.43^2}$ given in \eqref{S_b2}, where a Hopf bifurcation arises.}
    \label{tab:hopf-bif}
\end{table}
                                
We observe for $d<d_1\approx0.0220$ that the steady state is linearly stable, which is consistent with point 1. of Theorem \ref{stability-refrac} on the case when $|A^*|<1$ and the delay is small enough. 

On the other hand, we see in Figure \ref{Dominant-2} that for $d>d_{2}\approx 7.3114$ the steady state is unstable. We conjecture that there exists $\ell\in(0,1)$ such that when $\ell<|A^*|<1$ and $d$ is large enough, then the equilibrium is unstable and the real part of the eigenvalues in Equation \eqref{poles-refact} vanishes when $d\to\infty$.

Next, we show numerical simulations to illustrate the asymptotic behavior of Equation \eqref{eqdelay} under the same firing coefficient $S_{0.43^2}$ given by \eqref{S_b2} and different values of $d$.

For $d=0.01$ the dominant eigenvalue is 
$z_0\approx-0.1289+    5.3796\:i$ and hence the steady state $(n^*,r^*)$ is linearly stable. When the system is initialized with the following
slight perturbation of the steady state
\begin{equation}
    \label{init-data1}
   r^0=0.473,\, n^0(a)=0.473e^{-\frac{0.473}{0.527}(a-1)_+},
\end{equation} 
we observe in Figure \ref{Ex1} that $r(t)$ converges to the equilibrium $r^*$ and hence $n(t,a)$ does it to $n^*(a)$ in $L^1(\R^+)$. Notice also that the oscillatory behavior is due to poles with large imaginary part like $z_0$.

\begin{figure}[ht!]
    \centering
    \includegraphics[width=0.49\linewidth]{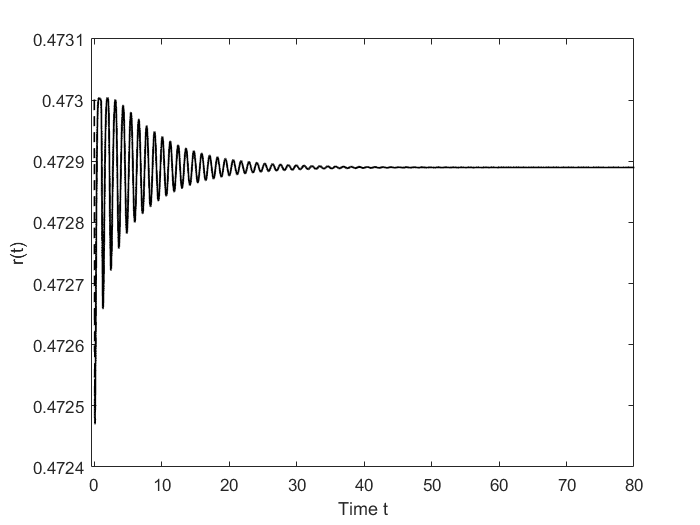}
    \caption{
    Evolution on time of the activity $r(t)$ of the nonlinear 
    system \eqref{eqdelay} with firing coefficient $S_{0.43^2}$  given in \eqref{S_b2} 
    and considering
    the initial data \eqref{init-data1} and the transmission delay $d=0.01$.}
    \label{Ex1}
\end{figure}

However, for this firing coefficient the convergence to the steady state is not global, unlike the case of weak nonlinearities \cite{canizo2019asymptotic,mischler2018,mischler2018weak,caceres2025comparison}. Indeed for the initial data given by
\begin{equation}
    \label{init-data2}
    r^0=1,\:n^0(a)=e^{-a},
\end{equation}
we see in Figure \ref{Ex2} that activity $r(t)$ converges towards a periodic solution. We conjecture that when $|A^*|$ is smaller, then the basin of attraction of the steady state is larger.

Moreover, this periodic solution does not have period close to $d$, unlike the periodic solutions observed in \cite{sepulveda2023well} where they appear to be $d$-periodic in the excitatory case for the rate \eqref{S_b2} for $b=1$. A similar phenomenon holds for inhibitory cases, where $2d$-periodic solution were numerically observed, and we expect that the period might change under certain rates $S$.

and $2d$-periodic in the inhibitory case. 

\begin{figure}[ht!]
    \centering
    \includegraphics[width=0.49\linewidth]{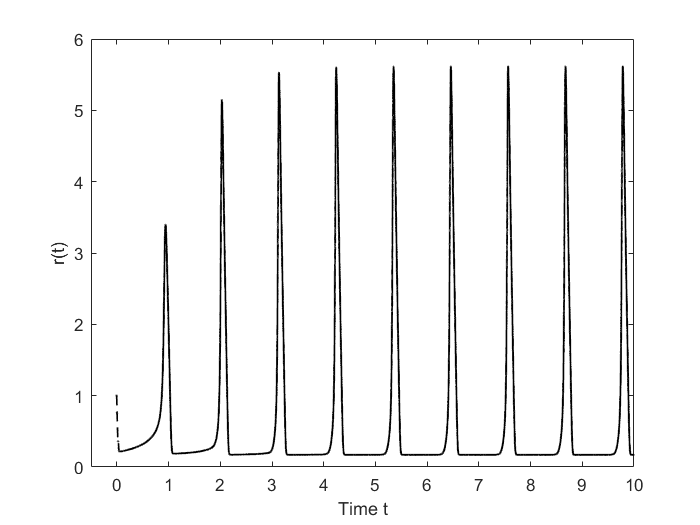}
    \caption{
    Evolution on time of the activity $r(t)$ of the nonlinear 
    system \eqref{eqdelay} with firing coefficient $S_{0.43²}$  given in \eqref{S_b2} 
    and considering
    the initial data \eqref{init-data2} and the transmission delay $d=0.01$.}
    \label{Ex2}
\end{figure}

Furthermore, the equilibrium $(n^*,r^*)$ becomes unstable when we increase the delay. Indeed, for $d=0.05$ we notice that Equation \eqref{poles-refact} has $z_0\approx 0.0665+4.6934\:i$ as the dominant eigenvalue and the linear instability follows from Theorem \ref{stability-poles}. Moreover, when the system is initialized with the perturbation of the steady state in \eqref{init-data1}, the activity $r(t)$ does not converge to the equilibrium, as we can observe in Figure \ref{Ex3}. Since $\Re(z_0)$ is small enough, we observe that the solution 
slowly moves away from equilibrium. In fact, the system tends to a periodic profile, as we see in more detail in Figure \ref{Ex3-zoom}, suggesting that it is a consequence of a Hopf bifurcation at $d_1=0.0220$. 

\begin{figure}[ht!]
    \centering
    \begin{subfigure}{0.49\textwidth}
    \centering
    \includegraphics[width=\textwidth]{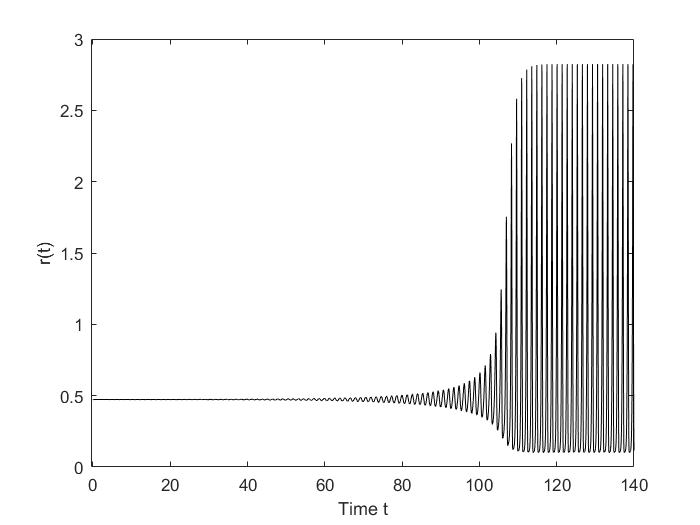}
    \caption{Numerical solution for $t\in[0,140]$.}
    \label{Ex3}
    \end{subfigure}
    \begin{subfigure}{0.49\textwidth}
    \centering
    \includegraphics[width=\textwidth]{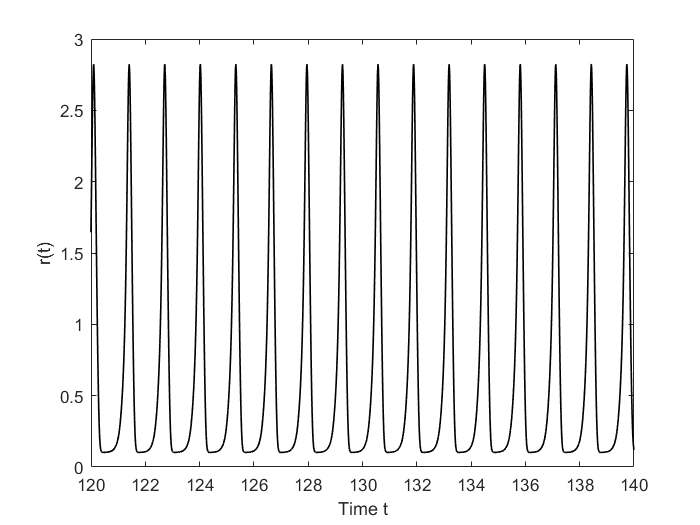}
    \caption{Numerical solution for $t\in[120,140]$.}
    \label{Ex3-zoom}
    \end{subfigure}
    \caption{
     Evolution on time of the activity $r(t)$ of the nonlinear 
    system \eqref{eqdelay} with firing coefficient $S_{0.43^2}$  given in \eqref{S_b2} 
    and considering the initial data \eqref{init-data1} and
    the transmission delay $d=0.05$.}
    \label{Ex3-c}
\end{figure}

From these numerical examples, we clearly observe that when the inequality \eqref{A-star<1} holds, the stability strongly depends on the delay $d$ and we conjecture that values found in Table \ref{tab:hopf-bif} generate Hopf bifurcations. We expect a similar behavior in the inhibitory case.

As a final remark, we notice from the numerical simulations of Figures \ref{Ex1}, \ref{Ex2} and \ref{Ex3-c} that jump discontinuities arising at times $t=kd$, with $k\in\N$, tend to vanish when $t\to\infty$ and the asymptotic profile is continuous.

\section{Conclusions and perspectives}
\label{sec:conclusion}
In this work, we have achieved a better understanding of the linear stability of the equilibria of the elapsed-time model by studying the asymptotic behavior of Volterra-type equations. In particular, for a given steady state, we have observed that the constant $A^*$ plays a key role in determining the stability and subsequently the asymptotic behavior of the system. That suggests that $A^*$ may be regarded as an indicator of the strength of interconnections and the type of regime (inhibitory or excitatory,  based on its sign) near the steady state, rather than $\|\partial_r S\|_\infty$ as  considered in previous articles. The stability criterion in Theorems \ref{stability-criterion} and \ref{stability-refrac} complements the previous results in this regard \cite{pakdaman2009dynamics,mischler2018,mischler2018weak,torres2021elapsed}. Theorem \ref{stability-poles} provides a general linear stability criterion based on the poles of the function $\Phi_d$ given
in \eqref{poles-delay}, which depends solely on the firing coefficient
$S$, the transmission delay $d$, and the chosen equilibrium. 
 
The analysis presented in this article might be extended to the case with distributed delay \eqref{eqdistri}.

For a general firing coefficient $S$, we present in Figure \ref{diagram-A*-d} a schematic diagram in terms of $A^*$ and the delay $d$,  summarizing the stability results and conjectures for a given steady state, shedding light on the general dynamics of Equation \eqref{eqdelay}.
\begin{figure}[ht!]
    \centering
    \includegraphics[width=0.65\linewidth]{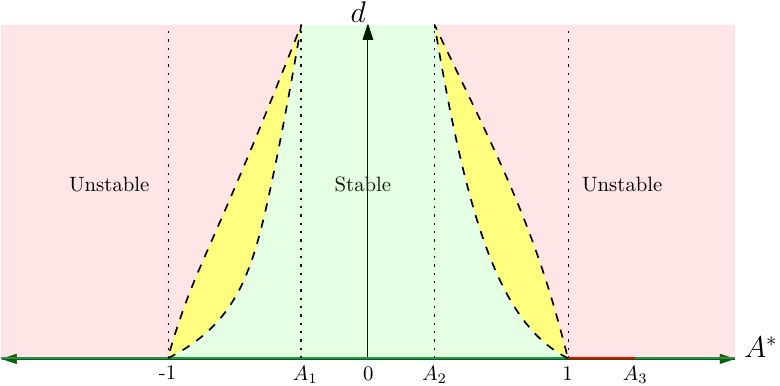}
    \caption{Stability of an equilibrium in terms of $A^*$ and the delay $d$. Green means that the equilibrium is linearly stable, while red means linearly instability and yellow represents a region where multiple changes in the stability arise due to purely imaginary eigenvalues.}
    \label{diagram-A*-d}
\end{figure}
When $|A^*|>1$, we know from the stability criterion of Theorem \ref{stability-criterion} that the steady state is unstable for all $d>0$. Moreover from point 2. of that theorem, we deduce
that there exists two constants $A_1,A_2$ with $-1<A_1<0<A_2<1$ such that the steady is linearly stable when $A^*\in(A_1,A_2)$ and for all delay $d\ge0$. We expect that in this case the equilibrium is the global attractor. 

On the other hand, we conjecture that when $A^*\in(-1,A_1)\cup(A_2,1)$, there exists two critical delays $d_1,\,d_2$ (which depend on $A^*$) such that the equilibrium is asymptotically stable when $d<d_1$, while multiple changes of stability arise due to Hopf bifurcations when $d\in(d_1,d_2)$, and the steady state is unstable when $d>d_2$. When the steady state is asymptotically stable, but the convergence is not global and an attracting periodic orbit exists.

Concerning the existence of periodic solutions, we conjecture that they arise whenever the equilibrium is unstable for $d\ge0$, as was previously observed in \cite{pakdaman2009dynamics,sepulveda2023well,pakdaman2013relaxation,torres2021elapsed} for both strongly excitatory and inhibitory regimes, which motivates the analysis of possible Hopf bifurcations when the value $A^*$ also varies. Moreover, an interesting question is to determine the period and the number of such periodic profiles. We expect that multiple periodic solutions would imply that the elapsed-time system is chaotic under a convenient choice of $S$.

For the case with instantaneous transmission, i.e. $d=0$, we conjecture that the stability criterion in Theorem \ref{stability-refrac} for the case with an absolute refractory period, can be extended to a general firing coefficient $S$. This means that we expect the steady state to be stable when $A^*<1$ and there exists a constant $A_3$ such that the equilibrium is unstable when $A^*\in(1,A_3)$ and stable when $A^*>A_3$.

Furthermore, we need to prove that Equation \eqref{eqdelay} satisfies the linearized stability principle, i.e. the linear dynamics around the steady state determine stability in the nonlinear system, by setting the suitable space and formulation for the delay problem as suggested in \cite{diekmann2012delay,engel2000one}. Numerical simulations in previously cited works suggest that the principle holds.

Finally, another interesting approach to understand the local stability and the asymptotic behavior of \eqref{eqdelay} is by the means of pseudo-equilibrium sequences, whose idea was introduced in  \cite{caceres2024sequence} for the NNLIF model. Notice that in order to solve the delayed equation \eqref{eqdelay},
we can consider time divided into intervals of size $d$ and split the solution $n$ over this partition, such that 
we are left with a sequence of linear problems given by

\begin{equation}
\begin{cases}
    \partial_t n_k + \partial_a n_k + S(a,r_{k-1}(t-d))n_k=0,& t\in I_k, \\
        r_k(t)=n_k(t,a=0)=\int_0^\infty S(a,r_{k-1}(t-d))n_k\,da\\
        n_k((k-1)d^+,a)=n_{k-1}((k-1)d^-,a),\\
        X_0(t)=r^0(t).   
    \end{cases}
\end{equation}
where $k\in\N$ and $I_k\coloneqq ((k-1)d,kd)$ with $n_k\in\mathcal{C}\left(\overline{I_k},L^1(\R^+)\right)$ and $r_k\in\mathcal{C}(\overline{I_k})$.

If the initial data $r^0$ in Equation \eqref{eqdelay} is a constant and the delay $d$ is large enough, the pair $(n_k,r_k)$ should be close to a steady state $(n_k^*,r^*_k)$ in its respective linear problem and from \eqref{steady-state-linear} we obtain the following recurrence

\begin{equation}
r_k^*=\frac{1}{\int_0^\infty e^{-\int_0^a S(a',r^*_{k-1})da'}da}=
\frac{1}{I(r^*_{k-1})}
\quad\mbox{and}\quad n_k^*(a)=r_k^* e^{-\int_0^a S(a',r^*_{k-1})\,da'},
\end{equation}
which resembles the formula for the steady states of the nonlinear problem in \eqref{Int-r}, with the diffe\-ren\-ce
that $I$ is evaluated at $r^*_{k-1}$ instead
of $r^*_{k}$. The sequence $(n_k^*,r_k^*)$ is called the \emph{pseudo-equilibrium sequence} for Equation \eqref{eqdelay}. This
sequence is defined independently of the nonlinear system, by considering the recursive equation $x_k=\frac{1}{I(x_{k-1})}$ to obtain the
\emph{firing rate sequence} $\{r_k^*\}_{k\ge 0}$, and then using it to construct $\{n_k^*\}_{k\ge 0}$ . Observe that the fixed points of that recursive 
equation correspond to the equilibria of the system. In this regard, we expect that asymptotic behavior and local stability is determined by the behavior of the sequence $\{r_k^*\}_{k\ge 0}$ when the delay is large enough under suitable hypothesis, analogously to \cite{caceres2024sequence} for the NNLIF model.
We note that the slope of the function $\frac{1}{I(x)}$ evaluated in its fixed points  determines the long-term behaviour of the recursive model. In this 
sense, point  1. in Theorem \ref{stability-criterion} is agreement with this asymptotic behaviour. 
We aim to address these aspects on a future work.

\subsubsection*{Acknowledgements}

\thanks{\em The authors acknowledge support from projects of the
  Spanish \emph{Ministerio de Ciencia e Innovación} and the European Regional
  Development Fund (ERDF/FEDER) through grants PID2020-117846GB-I00, PID2023-151625NB-100, RED2022-134784-T, and CEX2020-001105-M, funded by\\
  MCIN/AEI/10.13039/501100011033.

NT was supported by the grant Juan de la Cierva FJC2021-046894-I funded by MCIN/AEI and the European Union NextGenerationEU/PRTR.}
\newpage
\bibliography{biblio.bib}
\bibliographystyle{vancouver}

\end{document}